\newtheorem{theorem}{Theorem}
\newtheorem{lemma}[theorem]{Lemma}
\newtheorem{corollary}[theorem]{Corollary}
\newtheorem{hypothesis}[theorem]{Hypothesis}
\theoremstyle{definition}
\newtheorem{remark}[theorem]{Remark}
\newtheorem{example}[theorem]{Example}
\numberwithin{equation}{section}
\numberwithin{theorem}{section}
\newcommand{\YL}[1]{{\bf\color{blue}(YL: #1)}}
\newcommand{\wcT}{\widehat{\mathcal T}}
\newcommand{\diag}{\operatorname{diag}}
\newcommand{\card}{\operatorname{card}}
\newcommand{\spec}{\operatorname{Sp}}
\newcommand{\dom}{\operatorname{dom}}
\newcommand{\bbC}{\mathbb{C}}
\newcommand{\cT}{\mathcal{T}}
\newcommand{\cV}{\mathcal{V}}
\newcommand{\cW}{\mathcal{W}}
\newcommand{\cE}{\mathcal{E}}
\newcommand{\wL}{\widetilde{L}}
\newcommand{\e}{{\textrm e}}
\renewcommand{\i}{{\textrm i}}
\newcommand{\be}{{\mathbf e}}
\newcommand{\bu}{{\mathbf u}}
\newcommand{\im}{\operatorname{Im}}
\newcommand{\re}{\operatorname{Re}}
\newcommand{\rank}{\operatorname{rank}}
\renewcommand{\sl}{\sqrt{\lambda}}
\begin{document}

\author{O. Boyko}\address{South Ukrainian National Pedagogical University, Odesa, Ukraine}\email{boykohelga@gmail.com}
\author{Y. Latushkin}\address{University of Missouri, Columbia, USA}\email{latushkiny@missouri.edu}
\author{V. Pivovarchik}\address{South Ukrainian National Pedagogical University, Odesa, Ukraine, and\\ University of Vaasa, Finland} \email{vpivovarchik@gmail.com} %Please write full names and surnames of all co-authors here.
\title{Recovering the shape of a quantum tree by scattering data}
\date{\today}

%\hrule\smallskip\begin{center}Submission for publication in \textsc{???}\end{center}%\smallskip\hrule\bigskip

%\begin{OMabstract}
    %------------------------- Please type your abstract here ------------------
\begin{abstract}     We consider a scattering problem generated by the Sturm-Liouville equation on a tree which consists of an equilateral compact subtree and a half-infinite lead attached to its root.   We assume that the potential on the lead is  identically zero while the potentials on the finite edges are real. 
We show how to find the shape of the  tree using the $S$-function of the scattering problem and the eigenvalues of the operators associated with the compact tree. 
\end{abstract} 
    %---------------------------------------------------------------------------
%\end{OMabstract}

%\begin{OMkeywords}
    %------------------------- Please type your keywords here ------------------
 \keywords{     Sturm-Liouville equation, eigenvalue, equilateral tree, star graph, Dirichlet boundary condition, Neumann boundary condition, lead, $S$-function, asymptotics}
    %---------------------------------------------------------------------------
%\end{OMkeywords}

%\begin{OMsubjclass}
    %------------------------- Please type your 2020 MSC numbers here ----------
   \subjclass[2000]{ 34B45, 34B240, 34L20}
    %---------------------------------------------------------------------------
%\end{OMsubjclass}

\maketitle
%------------------- Type contents of your article below --------------------

\renewcommand{\baselinestretch}{1.5}
%\
\newcommand{\C}{\textbf{C}}
\newcommand{\CB}{{\mathcal B}}
\newcommand{\CH}{{\mathcal H}}
\newcommand{\CL}{{\mathcal L}}
\newcommand{\cotanh}{\mbox{cotanh}}
\newcommand{\HB}{{\mathcal HB}}
\newcommand{\N}{\textbf{N}}
\newcommand{\R}{\textbf{R}}
\newcommand{\sgn}{\mbox{sgn}}
\newcommand{\SHB}{{\mathcal SHB}}
\newcommand{\Z}{\textbf{Z}}
\newcommand{\za}{\alpha}
\newcommand{\zb}{\beta}
\newcommand{\ze}{\varepsilon}
\newcommand{\zf}{\varphi}
\newcommand{\zg}{\gamma}
\newcommand{\zk}{\kappa}
\newcommand{\zl}{\lambda}
\newcommand{\zo}{\omega}
\newcommand{\zs}{\sigma}
\newcommand{\zz}{\zeta}
\newcommand{\rf}[1]{(\ref{#1})}
\newcommand{\p}{^{\prime}}
\newcommand{\pp}{^{\prime\prime}}
%\newcommand{\N}{ \mbox{\textrm I$\!$N} }
%\newcommand{\R}{ \mbox{\textrm I$\!$R} }
%\newcommand{\C}%
%
%\newcommand{\C}%                %complex plane C
%{\unitlength=1em
%\begin{picture}(.8,1)%
%\put(.3,.05){\line(0,1){.6}}%
%\put(.05,0){\textrm C}%
%\end{picture}}
%

\section{Introduction.}\label{sec:intro}
\setcounter{equation}{0}

In this paper, we continue the work began in \cite{BMP24, MuP},  and settle several unsolved questions for the direct Sturm-Liouville scattering problem as well as for the inverse problem of recovering the shape of a metric graph consisting of a simple connected compact equilateral tree and a lead (a half-infinite edge) attached at the root. We assume standard boundary conditions. As far as we know, the first result  on the inverse problem was obtained in \cite{GS} where the authors proved that if the lengths of the edges are non-commensurate then the $S$-function uniquely determines the shape of the graph. In general, the knowledge of the $S$-function is not sufficient to determine the
topological structure of the graph uniquely; several negative results in this direction were obtained in \cite{KurSte02}.

Distinguishing co-spectral objects by scattering data was discussed in \cite{OSh} and, for metric graphs, in \cite{BSS}. In the latter paper the authors found that scattering does not always distinguish co-spectral graphs. Paper \cite{BSS1} explained why there is a difference between the result of \cite{OSh} (saying that the scattering data resolve co-spectrality) and the result of \cite{BSS1} (saying that they do not). After these theoretical contributions experimental physicists built related microwave networks and examined problem arising there, see, for example, \cite{HLBSKS}.  
%\YL{Slava will re-write the following paragraph as it is taken from some other paper.}
%A substantial  attempt to tackle the question of taking cospectral objects and checking whether adding scattering data helps to distinguish them was made  in 2005 by Okada, Shudo et al.  \cite{OSh}.
%In particular, the authors took the very classical example of Gordon and Webb and checked its scattering properties.
%Motivated by this work, R. Band, A. Sawicki and U. Smilansky checked what happens for metric graphs \cite{BSS}.
%The authors  found that scattering might not distinguish between cospectral graphs, if the leads are connected in a special way (which somehow resembles the symmetry behind the isospectral construction).
%In \cite{BSS1} the authors explained why there is a difference between the results of the Japanese group (that scattering data resolves cospectrality) and between their results (that it does not). After these theoretical works, A. Sawicki continued collaborating with Polish experimental physicists who have actually built such microwave networks and examined the problem there.  See, for example, \cite{HLBSKS}.

In the current paper we continue the investigation started in \cite{MuP} where it was shown that if a lead is attached to a compact connected simple equilateral graph then the $S$-function together with the eigenvalues uniquely determine the shape of the graph provided the number of vertices does not exceed $5$ while if the compact subgraph is an equilateral tree  then the statement holds true for the number of vertices that does not exceed $8$.

In the present paper we show how to determine the shape of a tree using asymptotics of the  $S$-function and eigenvalues. As in \cite{MuP} we assume that the potential on the lead is identically zero to deal with the meromorphic $S$-function. This approach originates from \cite{Re} and was used to deal with quantum graphs in \cite{P1} and \cite{LP}. In the case of meromorphic $S$-function  the corresponding Jost function  is an entire function of exponential type.

Specifically, we show how to determine a metric 
tree by given {\em scattering information}, that is, 
how to determine the shape of a metric tree $T_\infty$ obtained from a compact tree $T$  by attaching a semi-infinite lead at its root provided the following data are given: (a) The limits of a sequence of values of the scattering function $S$ along a prescribed sequence of complex numbers and (b) all common eigenvalues of two operators in $L^2(T)$ obtained, respectively,  by imposing the Dirichlet, respectively, standard boundary conditions at the root, and standard boundary conditions at all other vertices of $T$. We note that the common negative eigenvalues are isolated eigenvalues of the respective Sturm-Liouville operator in $L^2(T_\infty)$ while the positive common eigenvalues, if any, are imbedded into its essential spectrum.

In Section 2 we recall the formulation and provide proofs of some results on combinatorial trees obtained in \cite{P}.  Namely, we show  how to expand  into a branched continued fraction of a special form
 the ratio of (i) the determinant of the normalized Laplacian of a tree and (ii) the determinant of the modified  normalized Laplacian of its subtree obtained by deleting the root (an arbitrarily  chosen vertex) of the tree together with the incident edges. The coefficients in this continued fraction are the degrees of the vertices of the initial tree. Such expansions into branched continued fractions have been used earlier  in finite dimensional spectral problems on trees in \cite{Pp} and \cite{MP}.

In Section 3 we describe the direct Neumann and Dirichlet spectral problems generated by the Sturm-Liouville equation on an equilateral compact tree with, in general, nonzero potential. The Neumann problem corresponds to the operator $L_N$ equipped with the standard boundary conditions, that is, the continuity and Kirchhoff's conditions at the interior vertices and the Neumann conditions at the pendant vertices.  The Dirichlet problem corresponds to the operator $L_D$ equipped with Dirichlet condition at the root  of the tree and the standard conditions at all remaining vertices.

In Section 4 we describe a scattering problem on a non-compact tree obtained by attaching a lead (a half-infinite edge)  to a compact metric equilateral tree assuming that the potential of the lead is identically zero. 
We build machinery that allows one to describe the spectrum of the corresponding self-adjoint Sturm-Liouville operator $L_\infty$ on the non-compact tree. The essential spectrum of this operator covers the non-negative half-axis; in addition, there may exist normal eigenvalues (that is, isolated eigenvalues of finite multiplicity) as well as eigenvalues embedded into the essential spectrum. In particular, we show that the total number of the isolated eigenvalues of $L_\infty$ coincide with the total number of the isolated eigenvalues of the operator $L_N$ corresponding to the Neumann problem on the original compact part of the tree. In addition, we count the eigenvalues and their multiplicities via the roots and their multiplicities of the characteristic and the Jost functions.

In Section \ref{sec:5} we show how to find the shape of the non-compact tree with $p$ vertices and one lead attached using the $S$-function and the eigenvalues of the scattering problem on the tree described in Section \ref{sec:scattering}. This is done by recovering from the set of {\em scattering information} the determinants whose ratio is studied in Section \ref{sec:2}. The data consists of
two finite sequences of real numbers, $f_k$, $k=0,1,\ldots,p$, and $\hat{f}_{\hat{k}}$, $\hat{k}=0,1,\ldots,p-1$, obtained by taking the limits of certain infinite sequences calculated  using 
 the  values of the {\em reduced} scattering function obtained from the $S$-function that we introduce by cancelling common factors corresponding to the common eigenvalues of the operators $L_N$ and $L_D$ (either isolated or embedded into the essential spectrum of $L_\infty$). 

\textbf{Notations.}\, For a closed linear operator $L$ on a Hilbert space, we let $\dom(L)$, $\rho(L)$
and $\spec(L)$ denote its domain, resolvent set and spectrum. We
refer to \cite[Section I.2]{GK} for the definition of \textit{normal eigenvalues} (that is,
isolated eigenvalues of finite algebraic multiplicity), and denote by $\spec_{\mathrm{d}}(L)$ the
set of normal eigenvalues of $L$ and by
$\spec_{\mathrm{ess}}(L)=\spec(L)\backslash\spec_{\mathrm{d}}(L)$ the essential
spectrum.

\section{Auxiliary results}\label{sec:2}

In this section we consider combinatorial trees and forests; we refer to \cite{MP} for general introduction and terminology.
Let ${\mathcal T}$ be a  combinatorial tree with $p$ vertices $\cV=\{v_0, v^{(1)}, \dots, v^{(p-1)}\}$ rooted at $v_0$, and let $A$ be its adjacency matrix with the first row corresponding to $v_0$. Let
\begin{equation}
\label{2.1}
 D=\diag\{d(v_0), d(v^{(1)}),...,d(v^{(p-1)})\}
\end{equation} 
be the diagonal degree matrix; here and in what follows we denote by $d(v)$ the degree of the vertex $v$ in the tree $\cT$.   Let $\widehat{A}$ be the principal submatrix of $A$ obtained by deleting from $A$ the first row and the first column,  and let $\widehat{D}$ be the diagonal submatrix obtained by deleting  from $D$ the first row and the first column. We introduce notation 
\begin{equation}
\label{2.2}
\psi(z):=\det(-zD+A),\quad \widehat{\psi}(z):=\det(-z\widehat{D}+\widehat{A}), \quad z\in\bbC,
\end{equation}
and recall that the matrix $D^{-1/2}AD^{-1/2}$ is often called the {\em normalized Laplacian} of ${\mathcal T}$, and so $\psi(z)$ is its characteristic determinant.
The polynomial $\widehat{\psi}(z)$ is thus the characteristic determinant of the  \textit{modified normalized Laplacian} of a tree or a forest $\widehat{{\mathcal T}}$ as it is obtained by deleting the root together with its incident edges. The word {\em modified} is being used because  the entries of $\widehat{D}$ are still the degrees of the vertices of ${\mathcal T}$ (but {\em not} of the modified graph  ${\widehat{\mathcal T}}$). 
 To emphasize that the matrices in \eqref{2.1}, \eqref{2.2} depend on the tree $\cT$ we write, when needed, $D_\cT$ and $A_\cT$ instead of $D$ and $A$, and use notation $p_{{}_\cT}$ for the number of vertices in $\cT$.

 Throughout, we will use the following rather cumbersome but detailed notation for the vertices and subtrees of the tree $\cT$.
 The vertices adjacent to the root $v_0$ will be denoted by $v_1,\ldots,v_{d(v_0)}$. We represent $\cT$ as a union of subtrees $\cT_1$, \ldots, $\cT_{d(v_0)}$ which have a common vertex $v_0$. The subtrees of $\cT$ obtained by removing $v_0$ and incident to $v_0$ edges will be denoted by $\wcT_1,\ldots,\wcT_{d(v_0)}$ so that $v_{k_1}$ is the root of $\wcT_{k_1}$ for each $k_1=1,\ldots,d(v_0)$. For each such $k_1$ we denote by $v_{k_11},\ldots,v_{k_1(d(v_{k_1})-1)}$ the vertices of the ``second generation'', that is, the vertices adjacent to $v_{k_1}$ and different from $v_0$, and by $\wcT_{k_11},\ldots,\wcT_{k_1(d(v_{k_1})-1)}$ the ``second generation'' subtrees of $\wcT_{k_1}$ obtained by removing from $\wcT_{k_1}$ its root $v_{k_1}$ and the incident to $v_{k_1}$ edges. Thus, $v_{k_1k_2}$ is the root of the subtree $\wcT_{k_1k_2}$ of $\wcT_{k_1}$ for each $k_2=1,\ldots, d(v_{k_1})-1$. Here and in what follows we keep notation  $d(v_{k_1})$ for the degree of the vertex $v_{k_1}$ in the original tree $\cT$, {\em not} in the subtree $\wcT_{k_1}$.
 Acting inductively, we introduce the vertices $v_{k_1\ldots k_l}$ and subtrees $\wcT_{v_{k_1\ldots k_l}}$ for each $k_1,\ldots,k_l$ such that $d(v_{k_1\ldots k_{l-1}})>1$.
 Finally,  we denote by $v_{k_1\ldots k_{n-2}k_{n-1}k_n}$ with $k_n=1,\ldots,d(v_{k_1\ldots k_{n-1}})-1$ the vertices of the ``last generation'', that is, the vertices adjacent to the vertex $v_{k_1\ldots k_{n-2}k_{n-1}}$ and different from $v_{k_1\ldots k_{n-2}}$. Here, the number $n$ that depends on $k_1,k_2,\ldots,k_{n-1}$ is chosen such that $d(v_{k_1\ldots {k_n}})=1$, that is, the subtree $\wcT_{k_1\ldots k_{n-1}}$ with the root $v_{k_1\ldots k_{n-1}}$ has only one edge connecting the root with the pendant vertex 
  $v_{k_1\ldots k_{n}}=\wcT_{k_1\ldots k_{n}}$.
  
  \begin{remark}\label{rem:hats} Let $D^0_n=\diag\{1,0,\ldots, 0\}\in\bbC^{n\times n}$ for any $n\in{\mathbb N}$. For any $k=1,\ldots,d(v_0)$ the vertex $v_{k}$ has degree $d(v_{k})$ in the tree $\cT_{k}$ but the same vertex $v_{k}$ has degree $d(v_{k})-1$ in the tree $\wcT_{k}$. Thus, $\widehat{D}_{\cT_{k}}={D}_{\wcT_{k}}+D^0_{p_{{}_{T_{k}}}-1}$ and $-z\widehat{D}_{\cT_k}+\widehat{A}_{T_k}=-zD_{\wcT_k}+A_{\wcT_k}-zD^0_{p_{\wcT_k}}$ where $p_{{}_{\wcT_k}}=p_{{}_{T_{k}}}-1$ is the number of vertices in the tree $\wcT_k$.
  Decomposing the determinant of the matrix $-z\widehat{D}_{\cT_k}+\widehat{A}_{T_k}$ using its first row yields the formula
  \begin{equation}\label{detf}
  \det\big(-z\widehat{D}_{\cT_k}+\widehat{A}_{T_k}\big)=\det\big(-zD_{\wcT_k}+A_{\wcT_k}\big)-z\det\big(-z\widehat{D}_{\wcT_k}+\widehat{A}_{\wcT_k}\big)
  \end{equation}
  to be used later.
  \hfill$\Diamond$\end{remark}
  
 \begin{remark}\label{rem:enume} We let $\cE$ denote the set of edges $e\in\cE$ of the tree $\cT$ and let $g=|\cE|$ be the number of edges, $g=p-1$. We recall the well-known relation $2g=\sum_{v\in\cV}d(v)$ and enumerate the edges as follows: We let $e_1,\ldots,e_{d(v_0)}$ denote the edges connecting the root $v_0$ and the ``first generation'' vertices $v_{k_1}$, $k_1=1,\ldots,d(v_0)$. For each $k_1$ we let $e_{k_11},\ldots,e_{k_1(d(v_{k_1})-1)}$
  denote the edges connecting $v_{k_1}$ and the ``second generation'' vertices $v_{k_1k_2}$, $k_2=1,\ldots, d(v_{k_1})-1$. For each $k_1$ and $k_2$ we let  
  $e_{k_1k_21},\ldots,e_{k_1k_2(d(v_{k_1k_2})-1)}$  denote the edges connecting $v_{k_1k_2}$ and the ``third generation'' vertices $v_{k_1k_2k_3}$, $k_3=1,\ldots, d(v_{k_1k_2})-1$, and so on until we reach the edges $e_{k_1\dots k_n}$ connecting $v_{k_1\dots k_{n-1}}$ with the pendant vertex $v_{k_1\dots k_n}$ such that $d(v_{k_1\dots k_n})=1$.
  \hfill$\Diamond$ \end{remark}
 
The following theorem was proved in \cite[Theorem 3.1]{P}. For reader's convenience, we give below a proof containing more details than that in \cite{P}.

\begin{theorem}\label{Theorem 2.1}
Let ${\mathcal T}$ be a tree, $\psi$ and $\widehat{\psi}$ be defined in \eqref{2.2}, and $d(v)$ denote the degree of a vertex $v$ in the three $\cT$. Then the ratio ${\psi(z)}/{\widehat{\psi}(z)}$ can be  expanded into a branched continuous fraction  whose fragments are as follows:  The beginning fragment 
of the expansion has the form 
\[
-d(v_0)z-
\sum_{k_1=1}^{d(v_0)} \cfrac{1}{
-d(v_{k_1})z- \sum\limits_{k_2=1}^{d(v_{k_1})-1} \cfrac{1}{-d(v_{k_1k_2})z-\cdots}}.\]
The intermediate fragments of the branched continuous fraction have the form
\[
...-\sum_{k_\ell=1}^{d(v_{k_1\ldots k_{l-1}})-1}\cfrac{1}{-d(v_{k_1\ldots k_l})z-\sum\limits_{k_{l+1}=1}^{d(v_{k_1\ldots k_l})-1}\cfrac{1}{-d(v_{k_1\ldots k_lk_{l+1}})z-\cdots}}\,.
\]
% because there are $r$ vertices of the tree such that each of then has one incoming edge and $m_i-1$ ($i=1,...,r$)
 %outgoing edges. 
The end  fragments of the branched continued fraction correspond to the vertices  $v_{k_1,\ldots,k_{n-1}}$ connected to the pendant vertices $v_{k_1\ldots k_{n-1} k_n}$; they are of the form \[
\ldots -\sum_{k_n=1}^{d(v_{k_1\ldots k_{n-1}})-1}\cfrac{1}{-z}=\ldots -\frac{d(v_{k_1\ldots k_{n-1}})-1}{-z}.\]
\end{theorem}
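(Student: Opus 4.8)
\ The plan is to establish the expansion by a single local reduction at the root, applied recursively outward along the tree. For a rooted tree or forest $\mathcal S$ put $\psi_{\mathcal S}(z):=\det(-zD_{\mathcal S}+A_{\mathcal S})$ and $\widehat\psi_{\mathcal S}(z):=\det(-z\widehat D_{\mathcal S}+\widehat A_{\mathcal S})$, so that $\psi=\psi_{\cT}$ and $\widehat\psi=\widehat\psi_{\cT}$ in the notation of \eqref{2.2}; all the fractions below are identities of rational functions of $z$, which it suffices to verify away from the finite zero sets of the denominators, after which one clears denominators to obtain an equivalent polynomial identity. Throughout, the determinant of the empty matrix is taken to equal $1$.

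First I would perform the reduction at the root. Because $\cT$ is a tree, deleting $v_0$ and its incident edges disconnects $\cT$ into the subtrees $\wcT_1,\dots,\wcT_{d(v_0)}$ rooted at $v_1,\dots,v_{d(v_0)}$, and no edge of $\cT$ runs between distinct $\wcT_{k_1}$. Hence, writing the row and column of $v_0$ first, the matrix $-zD_{\cT}+A_{\cT}$ is bordered block-diagonal: its $(0,0)$-entry is $-d(v_0)z$, its remaining diagonal block is $\bigoplus_{k_1=1}^{d(v_0)}M_{k_1}$ with $M_{k_1}:=-z\widehat D_{\cT_{k_1}}+\widehat A_{\cT_{k_1}}$, and the border carries a single $1$ in the position of each root $v_{k_1}$. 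Consequently $\widehat\psi=\prod_{k_1}\det M_{k_1}$, and expanding $\det(-zD_{\cT}+A_{\cT})$ along the row and column of $v_0$ and then dividing by $\widehat\psi$ (equivalently, this is the Schur-complement formula for the $(0,0)$-entry of the inverse matrix) yields
\begin{equation*}
\frac{\psi(z)}{\widehat\psi(z)}=-d(v_0)z-\sum_{k_1=1}^{d(v_0)}\frac{\det\widehat M_{k_1}}{\det M_{k_1}},
\end{equation*}
where $\widehat M_{k_1}$ denotes $M_{k_1}$ with the row and column of its root $v_{k_1}$ deleted.

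Next I would rewrite each summand using Remark~\ref{rem:hats}. Deleting $v_{k_1}$ from $\wcT_{k_1}$ leaves a forest all of whose vertices retain their $\cT$-degrees, so $\det\widehat M_{k_1}=\widehat\psi_{\wcT_{k_1}}(z)$, while \eqref{detf} applied with $k=k_1$ gives $\det M_{k_1}=\psi_{\wcT_{k_1}}(z)-z\,\widehat\psi_{\wcT_{k_1}}(z)$; therefore
\begin{equation*}
\frac{\det\widehat M_{k_1}}{\det M_{k_1}}=\cfrac{1}{-z+\dfrac{\psi_{\wcT_{k_1}}(z)}{\widehat\psi_{\wcT_{k_1}}(z)}}.
\end{equation*}
Now I would close the argument by induction. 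The subtree $\wcT_{k_1}$ is itself a rooted tree, and its root $v_{k_1}$ has degree $d(v_{k_1})-1$ \emph{inside} $\wcT_{k_1}$; running the same two steps on $\wcT_{k_1}$ expands $\psi_{\wcT_{k_1}}/\widehat\psi_{\wcT_{k_1}}$ with leading term $-(d(v_{k_1})-1)z$ and a sum over the children $v_{k_1k_2}$, $k_2=1,\dots,d(v_{k_1})-1$, so that $-z+\psi_{\wcT_{k_1}}/\widehat\psi_{\wcT_{k_1}}$ turns into $-d(v_{k_1})z-\sum_{k_2}(\cdots)$, which is exactly the denominator displayed in the beginning fragment. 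Iterating down the branches reproduces the intermediate fragments, the coefficient of $-z$ at a vertex $v_{k_1\dots k_l}$ arising as $1$ (from the wrapper contributed by the level above) plus $d(v_{k_1\dots k_l})-1$ (the subtree-degree produced by the reduction at that level), i.e., as $d(v_{k_1\dots k_l})$. When the recursion reaches a vertex $v_{k_1\dots k_{n-1}}$ all of whose children $v_{k_1\dots k_n}$ are pendant, each $\wcT_{k_1\dots k_n}$ is a single isolated vertex, for which $\psi_{\wcT_{k_1\dots k_n}}=0$ and $\widehat\psi_{\wcT_{k_1\dots k_n}}=1$, so its contribution collapses to $(-z)^{-1}$, giving the end fragment $-\sum_{k_n=1}^{d(v_{k_1\dots k_{n-1}})-1}(-z)^{-1}$.

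I expect the only genuinely delicate point to be the degree bookkeeping: one must remember that $\widehat D$ records degrees in $\cT$ rather than in the subtree being processed, and that it is precisely the ``$-z$'' correction supplied by \eqref{detf} which, at each level, restores the full degree $d(v_{k_1\dots k_l})$ from the subtree-degree $d(v_{k_1\dots k_l})-1$ produced by the local reduction. Everything else is the elementary cofactor (or Schur complement) computation above, together with the observation that removing a vertex disconnects a tree.
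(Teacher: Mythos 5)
Your proposal is correct. It shares with the paper the main decomposition at the root: your Schur-complement/cofactor identity is exactly the paper's formula \eqref{PP6}, which the paper derives by expanding $\det\Psi$ along the row of $v_0$ and computing each minor $\det\Psi^{v_0|v_{k_1}}$ through its block-triangular structure (your route compresses that sign bookkeeping into the standard bordered-determinant identity, which is legitimate since $\widehat\psi$ is a nonzero polynomial, so the rational identities can be cleared of denominators). Where you genuinely diverge is in how the recursion is run. The paper never converts to intrinsic subtree data: it observes that each block $\Psi_{k_1}=-z\widehat D_{\cT_{k_1}}+\widehat A_{\cT_{k_1}}$ has the same bordered form with the full degree entry $-z\,d(v_{k_1})$ already sitting in the corner, and simply re-applies \eqref{PP6} to $\Psi_{k_1}$, $\Psi_{k_1k_2}$, and so on; the identity \eqref{detf} of Remark \ref{rem:hats} is not used in its proof of Theorem \ref{Theorem 2.1} at all. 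You instead pass, via \eqref{detf}, to the standalone rooted trees $\wcT_{k_1}$ with their intrinsic degree matrices, so your induction hypothesis is a clean statement about arbitrary rooted trees, and the wrapper $\det\widehat M_{k_1}/\det M_{k_1}=\bigl(-z+\psi_{\wcT_{k_1}}/\widehat\psi_{\wcT_{k_1}}\bigr)^{-1}$ restores the degree deficit $d(v_{k_1})-1\mapsto d(v_{k_1})$ at each level --- exactly the bookkeeping you flag as delicate, and you handle it correctly (the identification $\det\widehat M_{k_1}=\widehat\psi_{\wcT_{k_1}}$ works because removing $v_0$ changes only the degree of $v_{k_1}$, and your base case $\psi=0$, $\widehat\psi=1$ for an isolated vertex gives the end fragments). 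The trade-off: the paper's version avoids the degree-correction step but must note that the root expansion applies verbatim to the modified matrices, whereas yours buys a self-contained induction on honest trees at the cost of one extra application of Remark \ref{rem:hats}.
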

\begin{proof} We abbreviate $d_0=d(v_0)$, $d_{k_1}=d(v_{k_1})$, $d_{k_1k_2}=d(v_{k_1k_2})$, and so on.
For a tree $\cT$ we denote by $\Psi=\Psi_\cT$ the $p_\cT\times p_\cT$-matrix $-zD+A$ and by $\widehat{\Psi}=\widehat{\Psi}_\cT$ the $(p_\cT-1)\times (p_\cT-1)$-matrix $-z\widehat{D}+\widehat{A}$
such that $\psi(z)=\det\Psi$ and $\widehat{\psi}(z)=\det(\widehat{\Psi})$; here $p_\cT$ is the number of vertices in $\cT$. We abbreviate $p=p_\cT$, $p_{k_1}=p_{\wcT_{k_1}}$, $p_{k_1k_2}=p_{\wcT_{k_1k_2}}, \ldots $ to denote the number of vertices so that 
$p_\cT=1+\sum_{k_1=1}^{d(v_0)}p_{k_1}$, $p_{k_1}=1+\sum_{k_2=1}^{d(v_{k_1})-1}p_{k_1k_2}$ and so on. Keeping in mind Remark \ref{rem:hats}, we introduce notations
$\Psi_{k_1}=-z\widehat{D}_{\cT_{k_1}}+\widehat{A}_{\cT_{k_1}}$, $\Psi_{k_1k_2}=-z\widehat{D}_{\cT_{k_1k_2}}+\widehat{A}_{\cT_{k_1k_2}}, \dots $ to denote the  reduced modified matrices corresponding to the subtrees $\wcT_{k_1}$, $\wcT_{k_1k_2}$, \ldots, of $\cT$.
We will enumerate the columns and rows of $\Psi$,
$\Psi_{k_1}$, and so on, in accordance with our convention of numbering the vertices such that the elements of $\Psi$ are as seen in the table below.
We will use notation $\Psi^{v|v'}$ for the $(p-1)\times (p-1)$-matrix obtained from the $p\times p$ matrix $\Psi$ by deleting the row corresponding to the vertex $v$ and the column corresponding to the vertex $v'$. For instance, $\Psi^{v_0|v_0}=\widehat{\Psi}$, $\Psi_{1}^{v_1|v_1}=\widehat{\Psi}_{1}$, and so on. Similarly, $\Psi^{v| \cdot }$ will denote the $(p-1)\times p$-matrix obtained from $\Psi$ by deleting the row corresponding to the vertex $v$, and $\Psi^{ \cdot |v'}$ will denote the $p\times (p-1)$-matrix obtained from $\Psi$ by deleting the column corresponding to the vertex $v'$.

 {\small \[\begin{array}{c|ccccccccc}
\Psi &v_0 & v_1 & v_{11} & v_{111}  \dots   v_{1\dots n_1} & v_2 & v_{21} & v_{211}  \dots  v_{2\dots n_2} & v_3 \dots \\ \hline
 v_0 & -zd_0 & 1 & 0 & 0 \dots 0 & 1 & 0 & 0  \dots  0 & 1  \dots \\
v_1& 1 & -z d_1 & 1&  0 \dots  0 &  0 & 0 & 0 \dots  0 & 0  \dots  \\
v_{11} & 0 & 1 &  -z d_{11} & 1  \dots  0 &  0 & 0 & 0 \dots  0 & 0  \dots \\
v_{111} & 0 & 0 & 1 & -z d_{111}  \dots  0 & 0 & 0 & 0 \dots  0 & 0  \dots \\
\vdots & \vdots & \vdots & \vdots & \vdots  \ddots  \vdots & \vdots & \vdots & \vdots &   \vdots   \\
v_{1\dots n_1} & 0 & 0 & 0 & 0 \dots -z d_{1\dots n_1} & 0 & 0 & 0 \dots  0 & 0  \dots\\
v_2 & 1 & 0 & 0 & 0  \dots  0 & -z d_2 & 1 & 0 \dots  0 & 0  \dots \\
v_{21} & 1 & 0 & 0 & 0   \dots  0 & 1 & -z d_{21} & 1  \dots  0 & 0  \dots \\
v_{211}&0 &  0 & 0 & 0  \dots  0 & 0 & 1 & -zd_{211} \dots  0 & 0  \dots \\
\vdots & \vdots & \vdots & \vdots &   \vdots & \vdots & \vdots & \vdots  \ddots  \vdots & \vdots  \\
v_{2\dots n_2} & 0 & 0 &  0 & 0   \dots  0 & 0 & 0 & 0  \dots  0 & 0  \dots \\
v_3 & 1 &0 & 0 & 0  \dots  0 & 0 & 0 & 0   \dots  0 & -zd_3 \dots \\
\vdots & \vdots & \vdots & \vdots & \vdots & \vdots & \vdots & \vdots  \dots \vdots &   \ddots
\end{array}.\]}
To represent the matrix $\Psi$ in a block form, we denote by $\e_p$ the $p\times 1$ column vector with the first entry being $1$ and the remaining entries being $0$ and by $\e'_p=[1,0,\ldots,0]$ the  $1\times p$ row vector obtained by transposing $\e_p$, and use similar notations $\e'_{p_{11}}=[1,0,\ldots,0]$ and $\e_{p_{11}}$ for the $1\times p_{11}$ and $(p_{11}\times 1)$ vectors, and so on. The matrices $\Psi$, $\Psi_1$, $\Psi_{k_1k_2}$, $\Psi_{k_1k_2k_3\ldots}$ and so on are given by the formulas
\[\begin{bmatrix} -zd_0 & \e'_{p_1} & \e'_{p_2} & \cdots & \e'_{p_{d_0}} \\
\e_{p_1} & \Psi_1 & 0  & \cdots &0\\
\e_{p_2} & 0 & \Psi_2  &\cdots & 0\\
\vdots & \vdots & \vdots & \ddots  & \vdots\\
\e_{p_{d_0}} & 0 & 0 &\cdots &\Psi_{d_0},\end{bmatrix},
\begin{bmatrix} -zd_1 & \e'_{p_{11}} & \e'_{p_{12}} & \cdots & \e'_{p_{1(d_1-1)}} \\
\e_{p_1} & \Psi_{11} & 0  & \cdots &0\\
\e_{p_2} & 0 & \Psi_{12}  &\cdots & 0\\
\vdots & \vdots & \vdots & \ddots  & \vdots\\
\e_{p_{1(d_1-1)}} & 0 & 0 &\cdots &\Psi_{1(d_1-1)}\end{bmatrix},\]
and so on. The matrices $\widehat{\Psi}$, $\widehat{\Psi}_{k_1}$, $\widehat{\Psi}_{k_1k_2}$, and so on, are block diagonal, that is, $\widehat{\Psi}=\Psi_1\oplus\cdots\oplus\Psi_{d_0}$, $\widehat{\Psi}_1=\Psi_{11}\oplus\cdots\oplus\Psi_{1(d_1-1)}$, and so on. 
We decompose the determinant of $\Psi$ using the first row corresponding to the vertex $v_0$,
\begin{equation}\label{PP1}\begin{split}
\psi(z)&=(-1)^{1+1}(-zd_0)\det(\Psi^{v_0|v_0})%\\&
+(-1)^{1+2}\det\Psi^{v_0|v_1}+(-1)^{1+p_1+2}\det\Psi^{v_0|v_2}\\&
+(-1)^{1+p_1+p_2+2}\det\Psi^{v_0|v_3}+\ldots+(-1)^{1+p_1+\ldots+p_{d_0-1}+2}\det\Psi^{v_0|v_{d_0}}.\end{split}\end{equation}
The matrix $\Psi^{v_0|v_1}$ is block lower triangular with the blocks $\begin{bmatrix}\e_{p_1} &\Psi_1^{\cdot |v_1}\end{bmatrix}$, $\Psi_2, \ldots$, of sizes $p_1\times p_1$, $p_2\times p_2,\ldots$ on the main diagonal. Expanding the determinant of the first block using its first column we find that the determinant of the first block is equal to $\det(\Psi_1^{v_1|v_1})$. In turn,  $\Psi_1^{v_1|v_1}$ is a block diagonal matrix with the blocks $\Psi_{1k_2}$, $k_2=1,\dots,d_1-1$. As a result,
\begin{equation}\label{PP2}
\det(\Psi^{v_0|v_1})=\prod_{k_2=1}^{d_1-1}\det(\Psi_{1k_2})\cdot\prod_{k_1=2}^{d_0}\det(\Psi_{k_1}).
\end{equation}
The matrix $\Psi^{v_0|v_2}$ is a block lower triangular matrix with the diagonal blocks of the sizes $(p_1+p_2)\times(p_1+p_2)$, $p_3\times p_3, \ldots$, $p_{d_0}\times p_{d_0}$. The diagonal blocks are
\[\Psi(T_1,T_2):=\begin{bmatrix} \e_{p_1} & \Psi_1 & 0_{p_1\times(p_2-1)}\\
\e_{p_2} & 0_{p_2\times p_1} & \Psi_2^{\cdot|v_2}\end{bmatrix},\,\Psi_3,\cdots,\Psi_{d_0},\]
where the matrix $\begin{bmatrix}\e_{p_1}&\Psi_1\end{bmatrix}$ is $p_1\times(p_1+1)$ while the matrix $\Psi_2^{\cdot|v_2}$ is $p_2\times(p_2-1)$. Using the first column of $\Psi(T_1,T_2)$, we expand
\begin{equation}\label{PP3}\begin{split}
\det\Psi(T_1,T_2)&=\det\begin{bmatrix}
\Psi_1^{v_1|\cdot} & 0_{(p_1-1)\times(p_2-1)}\\
0_{1\times p_1} & 0_{1\times (p_2-1)} \\ 0_{(p_2-1)\times p_1}& \Psi_2^{\cdot|v_2}\end{bmatrix}
\\&+(-1)^{1+p_1+1}
\det\begin{bmatrix}\Psi_1& 0_{p_1\times(p_2-1)}\\
0_{(p_2-1)\times p_1}& \Psi_2^{v_2|v_2}\end{bmatrix}.
\end{split}
\end{equation}
The first determinant in the right-hand side of \eqref{PP3} is zero as the diagonal block $\begin{bmatrix}\Psi_1^{v_1|\cdot}\\0_{1\times p_1}\end{bmatrix}$ of size $p_1\times p_1$ of the respective matrix has zero last row.
It follows that
\begin{equation}\label{PP4}
\det\Psi^{v_0|v_2}=(-1)^{2+p_1}\prod_{k_2=1}^{d_2-1}\det(\Psi_{2k_2})\cdot\prod_{k_1=1,k_1\neq2}^{d_0}\det(\Psi_{k_1}).
\end{equation}
We write $\Psi^{v_0|v_3}$ as a lower triangular block diagonal matrix with the blocks $\Psi(T_1,T_2,T_3)$ and $\Psi_{k_1}$, $k_1=4,\ldots d_0$, on the main diagonal. Expanding $\det(\Psi(T_1,T_2,T_3))$ using the first column of the matrix $\Psi(T_1,T_2,T_3)$ and noting that the first two determinants in the expansion are equal to zero as the respective block matrices of sizes $(p_1+p_2)\times(p_1+p_2)$ contain a zero row, we arrive at the formula
\begin{equation}\label{PP5}
\det\Psi^{v_0|v_3}=(-1)^{2+p_1+p_2}
\prod_{k_2=1}^{d_3-1}\det(\Psi_{3k_2})\cdot\prod_{k_1=1,k_1\neq 3}^{d_0}\det\Psi_{k_1}. \end{equation}
Similar expressions hold for $k_1=4,\ldots,d_0$. Collecting \eqref{PP2}, \eqref{PP4}, \eqref{PP5} and similar expressions in \eqref{PP1} and recalling formulas \[\widehat{\psi}(z)=\det(\Psi^{v_0|v_0})=
\prod_{k_1=1}^{d_0}\det(\Psi_{k_1}), \,\det(\Psi_1^{v_1|v_1})=\prod_{k_2=1}^{d_1-1}\det(\Psi_{1k_2}), \ldots\] yields
\begin{equation}\label{PP6}
\frac{\det(\Psi)}{\det(\Psi^{v_0|v_0})}
=-zd_0-\frac{\det(\Psi_1^{v_1|v_1})}{\det(\Psi_1)}-\cdots-\frac{\det(\Psi_{d_0}^{v_{d_0}|v_{d_0}})}{\det(\Psi_{d_0})}.
\end{equation}
Applying formula \eqref{PP6} with $\Psi$ replaced by $\Psi_{k_1}$ for each $k_1=1,\ldots,d_0$ yields
\begin{equation}\label{PP7}
\frac{\det(\Psi_{k_1})}{\det(\Psi_{k_1}^{v_{k_1}|v_{k_1}})}
=-zd_{k_1}-\frac{
\det(\Psi_{k_11}^{v_{k_11}|v_{k_11}})}{\det(\Psi_{k_11})}-\cdots-\frac{\det(\Psi_{k_1(d_{k_1}-1)}^{v_{k_1(d_{k_1}-1)}|v_{k_1(d_{k_1}-1)}})}{\det(\Psi_{k_1(d_{k_1}-1)})}.
\end{equation}
Applying formula \eqref{PP6} with $\Psi$ replaced by $\Psi_{k_1k_2}$ for each $k_1=1,\ldots,d_0$ and $k_2=1,\ldots,d_{k_1}-1$ yields a formula expressing  the ratio of ${\det(\Psi_{k_1k_2})}$ and 
${
\det(\Psi_{k_1k_2}^{v_{k_1k_2}|v_{k_1k_2}})}$ via $-zd_{k_1k_2}$ and the ratio of 
${
\det(\Psi_{k_1k_2k_3}^{v_{k_1k_2k_3}|v_{k_1k_2k_3}})}$ and ${\det(\Psi_{k_1k_2k_3})}$ with $k_3=1,\ldots,d_{k_1k_2}-1$. Inductively, we obtain the assertions required in the theorem.
\end{proof}

\begin{remark}
The brunched continued fraction as in Theorem \ref{Theorem 2.1} clearly determines the matrix $-zD+A$ for the tree and the degrees $d(v)$ of all vertices, and thus determines the shape of the tree. Indeed, the entry $d_0$ in $-zD+A$ shows that the root $v_0$ has $d_0=d(v_0)$ adjacent vertices. For each of the vertices $v_{k_1}$ the number $d_{k_1}-1=d(v_{k_1})-1$ determines the number of the ``second generation'' of adjacent vertices, and so on.
\hfill$\Diamond$\end{remark}

\begin{example}\label{ex:2.4} The brunched continued fraction $-3z-\frac2z-\frac1{-2z-\frac1z}$
corresponds to the graph pictured in Figure \ref{snowfl}(b).
\hfill$\Diamond$\end{example}

\begin{remark}\label{rem:FP}
One can re-write the representation for the ratio $\psi(z)/\widehat{\psi}(z)$ in Theorem \ref{Theorem 2.1} as follows. Fix $z\in\bbC$ and denote $R_0=\psi(z)/\widehat{\psi}(z)$. Then $R_0=-d(v_0)z-\sum_{k_1=1}^{d(v_0)}\frac{1}{R_{k_1}}$, where we introduce notations $R_{k_1}=-d(v_{k_1})z-\sum_{k_2=1}^{d(v_{k_1})-1}\frac{1}{R_{k_1k_2}}$, and $R_{k_1k_2}=-d(v_{k_1k_2})z-\sum_{k_3=1}^{d(v_{k_1k_2})-1}\frac{1}{R_{k_1k_2k_3}}$, and so on, and also  
$R_{k_1\ldots k_{n-1}}=
-d(v_{k_1\ldots k_{n-1}})z-\sum_{k_n=1}^{d(v_{k_1\ldots k_{n-1}})-1}\frac{1}{R_{k_1\ldots k_{n-1}k_n}}$ and $R_{k_1\ldots k_n}=-z$, where $d(v_{k_1\ldots k_n})=1$.
Clearly, the following asymptotic relations hold: 
\begin{align}
R_0&=-d(v_0)z+O(1/z),\,
R_{k_1}=-d(v_{k_1})z+O(1/z),\label{inf1}\\
R_{k_1k_2}&=-d(v_{k_1k_2})z+O(1/z),\ldots,
R_{k_1\ldots k_{n-1}}=-d(v_{k_1\ldots k_{n-1}})z+O(1/z)\nonumber
\end{align} when $z\to+\infty$.
 \hfill$\Diamond$\end{remark}

\begin{remark}\label{rem: Rem2.6}
We stress that $\psi$ and $\widehat{\psi}$ are  polynomials of degree $p=p_{{}_\cT}$ and $p-1$ respectively, where $p$ is the number of vertices of the tree. Of course, not every polynomial may serve as $\psi$ and $\widehat{\psi}$ for a tree; these 
are very special polynomials. They can be computed for any given tree. In Table \ref{tab:BCs} we provide formulas for the polynomials $\psi$ and $\widehat{\psi}$ and their ratio for all trees with values $p=2,3,4$; all possible trees are pictured in Figure \ref{lowp}. \hfill$\Diamond$\end{remark}

\begin{figure}
\begin{tikzpicture}[scale=1.5]
\draw[fill] (-1, 0) circle [radius=.07];
%\draw[fill] (-.9, -.1) rectangle (-1.1,.1);
\node[above] at (-1,.1) {$v_0$};
%\draw[fill] (-1, 0) circle [radius=.05];
\draw[fill] (-1, -1) circle [radius=.05];
\draw [ultra thick] (-1, 0) -- (-1, -1);
\node [below] at (-1,-1.2) {$p=2$};
\end{tikzpicture}
\begin{tikzpicture}[scale=1.5]
\draw[fill] (-.5, 0) circle [radius=.07];
%\draw[fill] (-.4, -.1) rectangle (-.6,.1);
\node[above] at (-.5,.1) {$v_0$};
\draw[fill] (-1, -1) circle [radius=.05];
\draw [ultra thick] (-.5, 0) -- (-1, -1);
\draw[fill] (0, -1) circle [radius=.05];
\draw [ultra thick] (-.5, 0) -- (0, -1);
\hskip.3cm
\draw[fill] (0.5, 0) circle [radius=.07];
%\draw[fill] (.6, -.1) rectangle (.4,.1);
\node[above] at (.5,.1) {$v_0$};
%\draw [ultra thick] (1, 0) -- (1, -1);
\draw[fill] (0.5, -1) circle [radius=.05];
\draw[fill] (0.5, -.5) circle [radius=.05];
\draw [ultra thick] (.5, 0) -- (0.5, -1);
\node [below] at (-.5,-1.2) {$p=3$(a)};
\node [below] at (.5,-1.2) {$p=3$(b)};
\end{tikzpicture}
\begin{tikzpicture}[scale=1.5]
\draw[fill] (-1, 0) circle [radius=.07];
%\draw[fill] (-.9, -.1) rectangle (-1.1,.1);
\node[above] at (-1,.1) {$v_0$};
\draw[fill] (-1, -.33) circle [radius=.05];
\draw[fill] (-1, -.66) circle [radius=.05];
\draw[fill] (-1, -1) circle [radius=.05];
\draw [ultra thick] (-1, 0) -- (-1, -1);
\node [below] at (-1,-1.2) {$p=4$(a)};
\end{tikzpicture}
\begin{tikzpicture}[scale=1.5]
\draw[fill] (-1, 0) circle [radius=.07];
%\draw[fill] (-.9, -.1) rectangle (-1.1,.1);
\node[above] at (-1,.1) {$v_0$};
\draw[fill] (-1, -.5) circle [radius=.05];
\draw[fill] (-1.5, -1) circle [radius=.05];
\draw[fill] (-.5, -1) circle [radius=.05];
\draw [ultra thick] (-1, 0) -- (-1, -.5);
\draw [ultra thick] (-1.5, -1) -- (-1, -.5);
\draw [ultra thick] (-.5, -1) -- (-1, -.5);
\node [below] at (-1,-1.2) {$p=4$(b)};
\end{tikzpicture}
\begin{tikzpicture}[scale=1.5]
\draw[fill] (-1, 0) circle [radius=.07];
%\draw[fill] (-.9, -.1) rectangle (-1.1,.1);
\node[above] at (-1,.1) {$v_0$};
\draw[fill] (-1, -1) circle [radius=.05];
\draw[fill] (-1.5, -1) circle [radius=.05];
\draw[fill] (-.5, -1) circle [radius=.05];
\draw [ultra thick] (-1, 0) -- (-1, -.5);
\draw [ultra thick] (-1, 0) -- (-1, -1);
\draw [ultra thick] (-1.5, -1) -- (-1, 0);
\draw [ultra thick] (-.5, -1) -- (-1, 0);
\node [below] at (-1,-1.2) {$p=4$(c)};
\end{tikzpicture}
\begin{tikzpicture}[scale=1.5]
\draw[fill] (-.5, 0) circle [radius=.07];
%\draw[fill] (-.4, -.1) rectangle (-.6,.1);
\node[above] at (-.5,.1) {$v_0$};
\draw[fill] (-1, -1) circle [radius=.05];
\draw[fill] (-.33, -.33) circle [radius=.05];
\draw [ultra thick] (-.5, 0) -- (-1, -1);
\draw[fill] (0, -1) circle [radius=.05];
\draw [ultra thick] (-.5, 0) -- (0, -1);
\node [below] at (-.5,-1.2) {$p=4$(d)};
\end{tikzpicture}
\caption{All possible trees for $p=2,3,4$.}\label{lowp}
\end{figure}
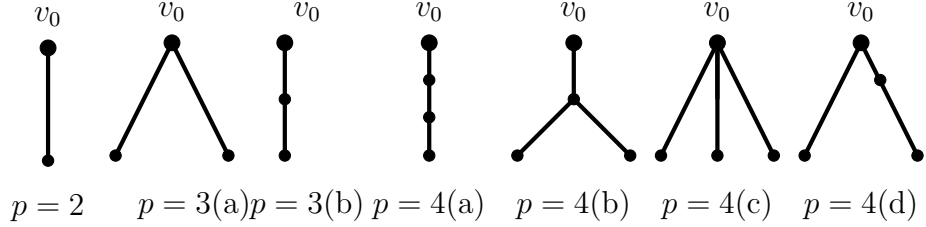

\begin{table}[t]
  \centering
  \footnotesize
  \bgroup
  \def\arraystretch{1.2}%  1 is the default, change whatever you need
  \begin{tabular}{|c|c|c|c|}
    \hline
    Tree
    & $\psi$
    & $\widehat{\psi}$
    & $\psi/\widehat{\psi}$ \\ \hline
    $p=2$ & $z^2-1$
    & $-z$
    & $-z+\frac1z$ \\ \hline
     $p=3 (a) $ & $2z(-z^2+1)$
    & $z^2$
    & $-2z+\frac2z$ \\ \hline
    $p=3 (b) $ & $-2z^3+2z$
    & $2z^2-1$
    & $-z-\frac{1}{-2z-\frac{1}{-z}}$ \\ \hline
    $p=4 (a) $ & $4z^4-5z^2+1$
    & $-4z^3+3z$
    & $-z-\frac{1}{-2z-\frac{1}{-2z-\frac{1}{-z}}}$ \\ \hline
    $p=4 (b) $ & $3z^2(z^2-1)$
    & $-3z^3+2z$
    & $-z+\frac1{3z-\frac2z}$ \\ \hline
    $p=4 (c) $ & $3z^2(z^2-1)$
    & $-z^3$
    & $-3z+\frac3{z}$ \\ \hline
    $p=4 (d) $ & $4z^4-5z^2+1$
    & $-2z^3+z$
    & $-2z-\frac{1}{-z}-\frac{1}{-2z-\frac{1}{-z}}$ \\ \hline
    \end{tabular}
  \egroup
  \caption{Functions $\psi$, $\widehat{\psi}$ and their ratios for low values of $p$.}
  \label{tab:BCs}
\end{table}

Theorem \ref{Theorem 2.2} allows one to use the following ``trial and error'' inductive algorithm to recover the shape of the graph provided the ratio of the functions $\psi$ and $\widehat{\psi}$ is given. That is, let us suppose that $\psi$ and $\widehat\psi$ are two given polynomial corresponding to a tree $\cT$ as indicated in \eqref{2.2}, and let $R_0(z)=\psi(z)/\widehat{\psi}(z)$ be given. %Then $p=\deg\psi$ is the number of vertices in $\cT$. 
Without loss of generality, we may suppose that the polynomials $\psi$ and $\widehat{\psi}$ are co-prime; otherwise, we will cancel the common factors in their ratio and re-denote the numerator again by  $\psi$ and denominator by $\widehat{\psi}$.
 We will use the following steps to determine the integers $d(v_0)$, $d(v_{k_1})$, $d(v_{k_1k_2})$ and so on.

{\em Step 1.} Compute $d(v_0)=\lim_{z\to+\infty}\big(\frac{{\psi}(z)}{-z\widehat{\psi}(z)}\big)$. This relation must hold by \eqref{inf1}.

{\em Step 2.} Split the polynomial $\widehat{\psi}$ into a product of $d(v_0)$ polynomials, that is, represent $\widehat{\psi}(z)=\prod_{k_1=1}^{d(v_0)}\psi_{k_1}(z)$.

We intend to use all possible ways to split $\widehat{\psi}$ into a product. At this stage the choice of $\psi_{k_1}$ is arbitrary, and might not lead to a successful further results as an unlucky chosen $\psi_{k_1}$ might not be a characteristic polynomial for any tree. If the degree of $\psi_{k_1}$ is low one may use polynomials given in Table \ref{tab:BCs}. For each $\psi_{k_1}$, $k_1=1,\ldots d(v_0)$, we treat $\psi_{k_1}$ as the determinant of the reduced modified matrix $-z\widehat{D}_{\cT_{k_1}}+\widehat{A}_{\cT_{k_1}}$ of the subtree $\cT_{k_1}$ if so chosen $\psi_{k_1}$ can serve this purpose; otherwise, we change the splitting.

{\em Step 3.} Using the method of undetermined coefficients, find a unique polynomial $\widehat{\psi}_{k_1}$ of degree $\deg\psi_{k_1}-1$ for each $k_1=1,\ldots,d(v_0)$ such that 
\begin{equation}\label{FR}
\frac{\psi(z)}{\widehat{\psi}(z)}+d(v_0)z=-\sum_{k_1=1}^{d(v_0)}\frac{\widehat{\psi}_{k_1}(z)}{\psi_{k_1}(z)},
\end{equation}
and notice that \eqref{FR} corresponds to the formula $R_0=-d(v_0)z-\sum_{k_1=1}^{d(v_0)}\frac{1}{R_{k_1}}$ in Remark \ref{rem:FP} with $R_0=\frac{\psi(z)}{\widehat{\psi}(z)}$ and $R_{k_1}=\frac{\psi_{k_1}(z)}{\widehat{\psi}_{k_1}(z)}$, where $k_1=1,\ldots,d(v_0)$.

{\em Step 4.} Find the sum of the reciprocals of $d(v_{k_1})$ by computing the limit and letting
\begin{equation}\label{FP2}
\sum_{k_1=1}^{d(v_0)}\frac{1}{d(v_{k_1})}=\lim_{z\to+\infty}\big(-z(d(v_0)z+R_0(z))\big).
\end{equation}
This relation must hold by Remark \ref{rem:FP}.

{\em Step 5.} We now view \eqref{FP2} as a Diophantine equation for the unknown natural numbers $d(v_{k_1})$, $k_1=1,\ldots,d(v_0)$, and known limits in the right-hand side of the equation. Determine $d(v_{k_1})$, $k_1=1,\ldots,d(v_0)$,
from the equation (notice that the solution might be not unique).

{\em Step 6.} Repeat Steps 1 through 5 with $\psi,\widehat{\psi}$ and $R_0=\psi(z)/\widehat{\psi}(z)$ replaced by $\psi_{k_1},\widehat{\psi}_{k_1}$ and $R_{k_1}=\psi_{k_1}(z)/\widehat{\psi}_{k_1}(z)$ for each $k_1=1,\ldots,d(v_0)$. 

Next,  we obtain polynomials  $\psi_{k_1k_2},\widehat{\psi}_{k_1k_2}$ and their ratio $R_{k_1k_2}=\psi_{k_1k_2}(z)/\widehat{\psi}_{k_1k_2}(z)$ for $k_2=1,\ldots,d(v_{k_1})-1$. We continue this inductively to find $d(v_{k_1k_2})$, and continue this process until we find $d(v_{k_1\ldots k_{n-1}})$.

\begin{example}\label{ex:2.7}
We now illustrate the algorithm described above. Given polynomials
\[\psi(z)=-108z^{11} +258z^9-202z^7+52z^5 \text{ and }
\widehat{\psi}(z)=36z^{10}-75z^8+52z^6-12z^4,\]
we compute the limit in Step 1 above to obtain $d(v_0)=3$.
One factorizes directly the polynomial $\widehat{\psi}=-z^4(-3z^2+2)^2(-4z^2+3)$, and so we have the representation
\[\frac{\psi}{\widehat{\psi}}
=-3z-\frac{33z^9-46z^7+16z^5}{z^4(-3z^2+2)^2(-4z^2+3)}.\]
Using this, we conclude that the limit in  the right hand side of \eqref{FP2} is equal to $11/12$, and so we arrive at the following
Diophantine equation, 
\begin{equation}\label{DiEq}
\frac{1}{d_1}+\frac{1}{d_2}+\frac{1}{d_3}
=\frac{11}{12},\quad d_1,d_2,d_3\in\mathbb{N},
\end{equation}
whose solutions $d_1=d(v_1)$, $d_2=d(v_2)$ and $d_3=d(v_3)$ give us the degrees of the ``first generation'' vertices connected to the root $v_0$ in the original graph $\cT$.
We claim that the only solution of  \eqref{DiEq} is $(d_1,d_2,d_3)=(3,3,4)$. To see that, with no loss of generality we denote by $d_1$ the smallest among $d_1,d_2,d_3$. Then $d_1$ obviously can not be equal to $1$. Also, $d_1$ can not be equal to $2$ because $1/d_2+1/d_3=5/12$ has only nonreal solutions. Clearly, $d_1$ can not be equal to $4$ or any larger number as then $1/d_2+1/d_3$ becomes too small to satisfy \eqref{DiEq}. This completes  the proof of the claim.

As soon as the solution to \eqref{DiEq} has been found, our next objective is to arrange the factors of $\widehat{\psi}$ into a product of three polynomials, call them $\psi_1$, $\psi_2$ and $\psi_3$, that correspond to the subtrees of $\cT$ rooted at $v_1$, $v_2$ and $v_3$. None of the factors could be of the form $z^k$ with $k=2,3,4$, as seen from the $\psi$-column in Table \ref{tab:BCs}, and if $k=1$ (compare this with the last equation in Theorem \ref{Theorem 2.1}) then one of the numbers $d_1$, $d_2$ or $d_3$ must be equal to $1$ which is not possible as then the Diophantine equation
\eqref{DiEq} has no solutions which are natural numbers.  
As a result, the function, say, $\psi_1$ must contain as a factor the term $(-3z^2+2)$, while $\psi_2$ must contain $(-3z^2+2)$ and $\psi_3$ must contain $(-4z^2+3)$. Distributing the powers of $z$ as pre-factors to $\psi_1$, $\psi_2$ and $\psi_3$ is arbitrary and its choice does not affect the result,  and so we choose 
$\psi_1(z)=\psi_2(z)=z(-3z^2+2)$ and $\psi_3(z)=z^2(-4z^2+3)$. Applying the method of undetermined coefficients we arrive at the following decomposition,
\[
\frac{\psi}{\widehat{\psi}}=
-3z-\frac{z^2}{z(-3z^2+2)}-\frac{z^2}{z(-3z^2+2)}-\frac{z^3}{z^2(-4z^2+3)}.
\]
Therefore,
\[
\frac{\psi}{\widehat{\psi}}=
-3z-\frac{1}{-3z-\frac{1}{-z}-\frac{1}{-z}}-\frac{1}{-3z-\frac{1}{-z}-\frac{1}{-z}}-\frac{1}{-4z-\frac{1}{-z}-\frac{1}{-z}-\frac{1}{-z}},
\]
and thus we recover the tree pictured in Figure \ref{snowfl}(a).
 \hfill$\Diamond$\end{example}
 
 For some type of trees the algorithm above is simpler and leads to a unique tree. We call a rooted tree  {\em $2$-snowflake} 
 when the combinatorial distance from the root to each vertex does not exceed $2$.
 An example of a $2$-snowflake is given in Figure \ref{snowfl} (a). The following result was obtained in \cite[Theorem 3.2]{P}.

%\vspace{3mm}

\begin{figure}\centering%\begin{center}
\begin{tikzpicture}[scale=1]
\draw[fill] (0, 0.1) circle [radius=.09];
%\draw[fill] (-0.1, 0) rectangle (0.1, 0.2);
\draw [ultra thick] (0, 0.1) -- (-1, 0.1);
\draw[fill] (-1, 0.1) circle [radius=.05];
\draw [ultra thick] (-1.2, -.5) -- (-1, 0.1);
\draw [ultra thick] (-1.2, .5) -- (-1, 0.1);
\draw[fill] (-1.2, -.5) circle [radius=.05];
\draw[fill] (-1.2, .5) circle [radius=.05];
\draw[fill] (0, -1) circle [radius=.05];
\draw [ultra thick] (0, 0) -- (0, 2);
\draw[fill] (0, 1) circle [radius=.05];
\draw [ultra thick] (0, 1) -- (-.5, 1.5);
\draw[fill] (-.5, 1.5) circle [radius=.05];
\draw [ultra thick] (0, 1) -- (.5, 1.5);
\draw[fill] (.5, 1.5) circle [radius=.05];
\draw[fill] (0, 2) circle [radius=.05];
\draw [ultra thick] (0, 0) -- (0, -2);
\draw [ultra thick] (0, -1) -- (0, 0);
\draw[fill] (0, -2) circle [radius=.05];
\draw [ultra thick] (0, -1) -- (-1, -1.5);
\draw[fill] (-1, -1.5) circle [radius=.05];
\node [right] at (0.1, 0.1) {$v_0$};
\node [below] at (0, -2) {(a)};
\end{tikzpicture} \hskip.5cm
\begin{tikzpicture}[scale=1]
\node [left] at (-.1, 2.1) {$v_0$};
\draw[fill] (0, 2) circle [radius=.09];
%\draw[fill] (-0.1, 2) rectangle (0.1, 2.2);
\draw [ultra thick] (0, 2) -- (0, 1);
\draw[fill] (0, 1) circle [radius=.05];
\draw [ultra thick] (0, 2) -- (-1, 1);
\draw[fill] (-1, 1) circle [radius=.05];
\draw [ultra thick] (0, 2) -- (1, 1);
\draw[fill] (1, 1) circle [radius=.05];
\draw [ultra thick] (1, 1) -- (1, 0);
\draw[fill] (1, 0) circle [radius=.05];
\node [below] at (0, 0) {(b)};
\end{tikzpicture}\hskip1cm
\begin{tikzpicture}[scale=1]
\draw[fill] (0, 0.1) circle [radius=.09];
%\draw[fill] (-0.1, 0) rectangle (0.1, 0.2);
\draw [->] [ultra thick] (0, 0.1) -- (1.5, 0.1);
\draw [ultra thick] (0, 0.1) -- (-1, 0.1);
\draw[fill] (-1, 0.1) circle [radius=.05];
\draw [ultra thick] (-1.2, -.5) -- (-1, 0.1);
\draw [ultra thick] (-1.2, .5) -- (-1, 0.1);
\draw[fill] (-1.2, -.5) circle [radius=.05];
\draw[fill] (-1.2, .5) circle [radius=.05];
\draw[fill] (0, -1) circle [radius=.05];
\draw [ultra thick] (0, 0) -- (0, 2);
\draw[fill] (0, 1) circle [radius=.05];
\draw [ultra thick] (0, 1) -- (-.5, 1.5);
\draw[fill] (-.5, 1.5) circle [radius=.05];
\draw [ultra thick] (0, 1) -- (.5, 1.5);
\draw[fill] (.5, 1.5) circle [radius=.05];
\draw[fill] (0, 2) circle [radius=.05];
\draw [ultra thick] (0, 0) -- (0, -2);
\draw [ultra thick] (0, -1) -- (0, 0);
\draw[fill] (0, -2) circle [radius=.05];
\draw [ultra thick] (0, -1) -- (-1, -1.5);
\draw[fill] (-1, -1.5) circle [radius=.05];
\node [right] at (0.1, 0.3) {$v_0$};
\node [below] at (0, -2) {(c)};
\end{tikzpicture}
\hskip1cm\begin{tikzpicture}[scale=1]
\node [left] at (-.1, 2.1) {$v_0$};
\draw[fill] (0, 2.1) circle [radius=.09];
%\draw[fill] (-0.1, 2) rectangle (0.1, 2.2);
\draw [->] [ultra thick] (0, 2.1) -- (1.5, 2.1);
\draw [ultra thick] (0, 2) -- (0, 1);
\draw[fill] (0, 1) circle [radius=.05];
\draw [ultra thick] (0, 1) -- (-.5, .5);
\draw[fill] (-.5, .5) circle [radius=.05];
\draw [ultra thick] (0, 1) -- (.5, .5);
\draw[fill] (.5, .5) circle [radius=.05];
\draw [ultra thick] (.5, .5) -- (1.5, .5);
\draw[fill] (1.5, .5) circle [radius=.05];
\draw [ultra thick] (.5, .5) -- (0, 0);
\draw[fill] (0, 0) circle [radius=.05];
\draw [ultra thick] (.5, .5) -- (1, 0);
\draw[fill] (1, 0) circle [radius=.05];
\draw [ultra thick] (1.5, .5) -- (2, 1);
\draw[fill] (2, 1) circle [radius=.05];
\draw [ultra thick] (1.5, .5) -- (2, 0);
\draw[fill] (2, 0) circle [radius=.05];
\node [below] at (1, 0) {(d)};
\end{tikzpicture}
\caption{(a) A $2$-snowflake, see Example \ref{ex:2.7}; 
(b) a tree with $p=5$ vertices, see Example \ref{ex:2.4};
(c)  the $2$-snowflake as in (a) but with a lead attached to the root as discussed in Section \ref{sec:scattering}; (d) a caterpillar with a lead, see Section \ref{sec:scattering}.}
%\end{center}
\label{snowfl}
\end{figure}
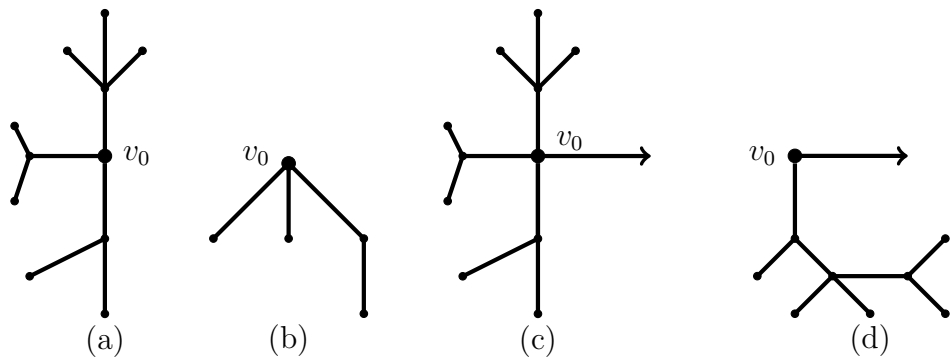

\begin{theorem}\label{Theorem 2.2}  Let ${\mathcal T}$ be a $2$-snowflake graph rooted at vertex $v_0$. The  two   functions  $\psi(z)$ and $\widehat{\psi}(z)$ defined in \eqref{2.2} uniquely determine the shape of the graph. \end{theorem}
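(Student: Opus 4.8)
The plan is to specialize the branched continued fraction of Theorem~\ref{Theorem 2.1} to the $2$-snowflake, observe that it collapses to an especially simple partial-fraction expansion of $\psi/\widehat{\psi}$, and read the degrees off one pole at a time. First I would record the relevant combinatorial fact: a $2$-snowflake rooted at $v_0$ is completely determined by the integer $d(v_0)$ together with the multiset of first-generation degrees $\{d(v_1),\ldots,d(v_{d(v_0)})\}$. Indeed, since every vertex lies at combinatorial distance at most $2$ from $v_0$, every second-generation vertex is pendant (a further neighbour would sit at distance $3$), so each subtree $\wcT_{k_1}$ is either a single vertex (when $d(v_{k_1})=1$) or a star with centre $v_{k_1}$ and $d(v_{k_1})-1$ pendant vertices. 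Thus it suffices to recover $d(v_0)$ and that multiset from $\psi$ and $\widehat{\psi}$.

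Next I would feed this structure into Theorem~\ref{Theorem 2.1}. For a $2$-snowflake the end fragments attach directly to the first-generation vertices, so in the notation of Remark~\ref{rem:FP} one has, for every $k_1$,
\[
R_{k_1}=-d(v_{k_1})z+\frac{d(v_{k_1})-1}{z}=\frac{-d(v_{k_1})z^2+d(v_{k_1})-1}{z},
\]
which is valid also when $d(v_{k_1})=1$ (then $R_{k_1}=-z$). Consequently
\[
g(z):=\frac{\psi(z)}{\widehat{\psi}(z)}+d(v_0)z=-\sum_{k_1=1}^{d(v_0)}\frac{1}{R_{k_1}}=\sum_{j}\frac{m_j}{d_j}\cdot\frac{z}{z^2-a_j},\qquad a_j:=1-\frac{1}{d_j},
\]
where $d_1<d_2<\cdots$ are the distinct values occurring among $d(v_1),\ldots,d(v_{d(v_0)})$, the integer $m_j\ge 1$ is the multiplicity of $d_j$, and $\sum_j m_j=d(v_0)$. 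In particular $g$ is a proper rational function (it vanishes at infinity, straight from this formula) all of whose poles are simple and located at $\pm\sqrt{a_j}$ with $a_j\in[0,1)$; the pole at the origin occurs exactly when some $d_j=1$.

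With this formula in hand the recovery is immediate. From $\psi$ and $\widehat{\psi}$ one computes $d(v_0)=\lim_{z\to+\infty}\psi(z)\big/\bigl(-z\,\widehat{\psi}(z)\bigr)$ by \eqref{inf1}, forms $g(z)=\psi(z)/\widehat{\psi}(z)+d(v_0)z$, and takes its partial-fraction decomposition. The pole locations give the numbers $a_j$, hence $d_j=(1-a_j)^{-1}$; the coefficient $c_j$ of $\tfrac{z}{z^2-a_j}$ gives $m_j=c_j d_j$ (for the pole at the origin, $c_0=m_0$ is just the residue, i.e.\ the number of pendant vertices adjacent to $v_0$). Since $a_j=1-1/d_j$ is strictly increasing in $d_j$, distinct first-generation degrees yield distinct poles, so the multiset $\{d(v_1),\ldots,d(v_{d(v_0)})\}$ is recovered with no ambiguity; together with $d(v_0)$ this pins down the $2$-snowflake by the first paragraph.

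The place where care is needed — and where the argument genuinely improves on the general ``trial and error'' algorithm preceding the theorem — is Step~5 of that algorithm: there one only recovers $\sum_{k_1}1/d(v_{k_1})$ and must solve a Diophantine equation whose solution need not be unique. That obstacle disappears for the $2$-snowflake precisely because the subtrees below the first generation are trivial, so each branch contributes a \emph{single} pole of $g$ whose exact location $1-1/d(v_{k_1})$ determines the individual degree $d(v_{k_1})$, while the residue there counts how many branches share that degree. Two minor points should be checked along the way: that the polynomial part of $\psi/\widehat{\psi}$ is exactly $-d(v_0)z$ (immediate from the continued-fraction formula above, and consistent with $\psi/\widehat{\psi}$ being an odd rational function, which follows from $\psi(-z)=(-1)^{p}\psi(z)$ and $\widehat{\psi}(-z)=(-1)^{p-1}\widehat{\psi}(z)$ since $\cT$ is bipartite), and that the recovered quantities $(1-a_j)^{-1}$ and $c_j(1-a_j)^{-1}$ are automatically positive integers because $g$ came from an honest $2$-snowflake. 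Neither presents a real difficulty; the content of the theorem is entirely the collapse of the continued fraction to the displayed partial fraction.
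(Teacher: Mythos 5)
Your proposal is correct and follows essentially the same route as the paper, which proves the theorem by specializing the branched continued fraction of Theorem~\ref{Theorem 2.1} to the $2$-snowflake, where it collapses to $\frac{\psi(z)}{\widehat{\psi}(z)}=-d(v_0)z-\sum_{k_1=1}^{d(v_0)}\frac{1}{-d(v_{k_1})z-\frac{d(v_{k_1})-1}{-z}}$ (the paper defers the details to the cited reference). Your partial-fraction reading of the pole locations $1-1/d(v_{k_1})$ and residues is a correct and explicit way of extracting $d(v_0)$ and the multiset of first-generation degrees from that identity, which is exactly the data determining a $2$-snowflake.
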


In case of the $2$-snowflake graph we have
\[
\frac{\psi(z)}{\widehat{\psi}(z)}=
-d(v_0)z-
\sum_{k_1=1}^{d(v_0)} \frac{1}{
-d(v_{k_1})z-\frac{d(v_{k_1})-1}{-z}}
\]
where $d(v_0)$ is the degree of the the root, $d(v_{k_1})$, $k_1=1,2, ..., d(v_0)$, are the degrees of the other vertices.

\section{Quantum graph problems}\label{sec:3}

Let $T$ be an equilateral metric  tree with $p$ vertices and $g=p-1$ edges, each  of length $\ell$. We choose an arbitrary vertex $v_0$ as the root and direct all edges  away from the root  so that each vertex different from the root has exactly one incoming edge.
We consider  the Sturm-Liouville equations on the edges, 
\begin{equation}
\label{3.1}
-y_j^{\prime\prime}+q_j(x)y_j=\lambda y_j, \ \  j=1,2,..., g.
\end{equation} 
Throughout, we imposing the following
standing assumption.
\begin{hypothesis}\label{hypo:stass}
All edges of the tree $T$ have the same length $\ell$, and for all edges the potentials $q_j$ are real-valued functions from the space $L^2(0,\ell)$. \end{hypothesis}
We will now describe the spectral problem on the tree equipped with either {\em Dirichlet} or {\em standard} boundary conditions; the corresponding Sturm-Liouville operator will be denoted by $L_D$ or $L_N$, respectively.

For each edge $e_j$ incoming to a pendant vertex which is not the root 
we impose the Neumann condition 
\begin{equation}
\label{3.2}
y_j'(\ell)=0. 
\end{equation}
At  each interior vertex $v$ which is not the root we impose: (a) the continuity conditions   
\begin{equation}
\label{3.3}
y_j(\ell)-y_k(0)=0, \quad e_j\twoheadrightarrow v,\, e_k\twoheadleftarrow v,
\end{equation}
for the incoming to $v$ edge $e_j$ and  all outgoing from $v$ edges $e_k$, and (b) the Kirchhoff's conditions
\begin{equation}
\label{3.4}
y'_j(\ell)-\mathop{\sum}\limits_{k:e_k\twoheadleftarrow v} y_k'(0)=0,
\end{equation}
where the sum is taken over all  edges $e_k$ outgoing from $v$. We recall that all vertices different from the root have exactly one incoming edge.

If the root $v_0$ is an interior vertex then we denote by $e_k$, $k=1,\ldots,d(v_0)$,
all edges outgoing from the root, and at $v_0$ we impose:  (a) the continuity conditions
\begin{equation}
\label{3.5}
y_1(0)-y_k(0)=0, \, k=2,\ldots,d(v_0),
\end{equation}
for all edges $e_k$  incident to the root, and (b) the Kirchhoff's condition
\begin{equation}
\label{3.6}
\mathop{\sum}\limits_{k: e_k\sim v_0} y_k'(0)=0,
\end{equation}
where the summation is taken over all edges incident to (that is, outgoing from) the root.
If the root $v_0$ is a pendant vertex then we denote by $e_1$ the only edge outgoing from $v_0$ and impose at $v_0$ the Neumann condition
\begin{equation}
\label{3.7}
y_1'(0)=0.
\end{equation}
The above conditions (continuity plus Kirchhoff's if the vertex is not pendant or continuity plus Neumann if the vertex is pendant)  are called {\em standard}. 

Throughout, we always impose the standard conditions at all vertices except at the root. 
At the root $v_0$ one can impose one of the following two conditions: either (1)  the Dirichlet condition 
\begin{equation}
\label{3.9}
y_{k}(0)=0,\, k=1,\ldots,d(v_0),
\end{equation}
for all edges $e_{k}$ incident to the root $v_0$,  
 or (2) the standard condition; (1) is called Problem D while (2) is called  Problem N.
 Thus, the Dirichlet problem (Problem D)  consists of equations (\ref{3.1})--(\ref{3.4}) and (\ref{3.9}) while the Neumann problem consists of equations (\ref{3.1})--(\ref{3.4}) and \eqref{3.5}, \eqref{3.6} and \eqref{3.7}.

We associate with the two boundary value problems differential operators $L_D$ and $L_N$ acting in $L^2(T)=\oplus_{j=1}^gL^2(0,\ell)$ by the rule $L:(y_j)_{j=1}^g\mapsto (-y_j''+q_jy_j)_{j=1}^g$ and equipped with the domains, %cf.\ \eqref{4.3} below,
\begin{align}\label{domLD}
\dom(L_D)&=\big\{ (y_j)_{j=1}^g\in H^2(T): \text{Dirichlet at $v_0$, standard at all $v\neq v_0$}  \big\},\\ \label{domLN}
\dom(L_N)&=\big\{ (y_j)_{j=1}^g\in H^2(T): \text{standard  at $v_0$, standard at all $v\neq v_0$}  \big\},
\end{align}
where we denote by $H^2(T)=\bigoplus_{j=1}^gH^2(0,\ell)$  the Sobolev spaces.

We will now recall how to construct the
 characteristic function of the Sturm--Liouville problems described above, that is, the function of the spectral parameter whose zeros correspond to the eigenvalues of the problem or the respective operators $L_D$ and $L_N$. We refer to \cite{BerkKuch, Kostr-Shrader, Kuraso,  Pokornyj, RS12, Sch06} for a closely related to this topic material including the equality of the multiplicities of the eigenvalues and zeros of the determinants of the characteristic matrices and secular equations.

 We begin by introducing functions
 $s_j(\sqrt{\lambda},x)$ and $c_j(\sqrt{\lambda},x)$ for $x\in e_j$, $j=1,\dots,g$, as  the solutions of the Sturm-Liouville equation on the edge $e_j$ satisfying the following boundary conditions at zero, \begin{equation}\label{sc}
 s_j(\sqrt{\lambda},0)=s_j'(\sqrt{\lambda},0)-1=0 \text{ and $c_j(\sqrt{\lambda},0)-1=c_j'(\sqrt{\lambda},0)=0$}.\end{equation} 
 Here and in what follows we treat $\zeta=\sl\in\bbC$ as a complex variable such that $\zeta^2=\lambda$. We remark that the functions $c_j(\cdot,x)$ and $s_j(\cdot,x)$ are even and thus the choice of sign of $\sl$ does not matter. The functions $c_j(\cdot,x)$ and $s_j(\cdot,x)$ are entire functions of $\sl\in\bbC$ and $\lambda\in\bbC$, and we refer to Definition 12.2.2, Lemma 12.2.4 and Theorem 12.2.9 in \cite{MP0} for a discussion of this matter. In what follows we suppress the index $j$ in  the notation for $c_j$ and $s_j$ when the potentials $q_j$ are equal on all edges $e_j$, $j=1,\ldots, g$. 
 
 \begin{remark}\label{CH}
Below, we adhere to the following notational convention: The `check'-sign above a letter means that the `check'-ed object corresponds to the Sturm-Liouville problem with the potential identically equal to zero on respective edges.
In particular, $\check{s}_j(\sqrt{\lambda},x)$ is  simply
$\frac1{\sqrt{\lambda}} \sin(\sqrt{\lambda} x)$ when $\lambda\neq0$ and $\check{s}_j({0},x)=x$ while $\check{c}_j(\sqrt{\lambda},x)=\cos(\sqrt{\lambda}x)$ when $\lambda\neq0$ and $\check{c}_j({0},x)=1$. \hfill$\Diamond$\end{remark}
 
Returning to the general potentials, to construct the characteristic function, we look for coefficients $a_j, b_j$ such that the solution of \eqref{3.1} of the form 
\begin{equation}\label{yj}
y_j=a_{j}c_{j}(\sqrt{\lambda},x)+b_{j}s_{j}(\sqrt{\lambda},x), \,  x\in (0,\ell),\, j=1,\dots,g,
\end{equation}
satisfies the boundary conditions at all vertices. We substitute the expressions in \eqref{yj} with $2g$ unknowns $(a_1,\ldots,a_g,b_1,\ldots,b_g)^\top\in\bbC^{2g\times 1}$, where $\top$ is the transposition, into the boundary conditions at the vertices corresponding to the boundary value problems described above as 
Problem D and  Problem N. For each of the two problems
we obtain a system of $2g$ linear algebraic equations with unknowns $a_j, b_j$. Indeed, at each vertex $v$ the continuity condition provides $d(v)-1$ equations while the Kirchhoff/Neumann condition gives one more equation totaling to $2g=\sum_vd(v)$ equations.
For Problem D (respectively, Problem N), we denote by  $\Phi_D(\lambda)$  (respectively, by $\Phi_N(\lambda)$) the $2g\times 2g$ matrix  of this system and call it the \textit{characteristic matrix} of the respective problem. The null-space of the characteristic matrix is isomorphic to the eigenspace of the respective differential operator. We call the determinants $\phi_D(\lambda)=\det\Phi_D(\lambda)$ and $\phi_N(\lambda)=\det\Phi_N(\lambda)$ the {\em characteristic functions} of Problem D and N.
Then the equation
\[
\phi_D(\lambda):=\det\Phi_D(\lambda)=0\qquad\hbox{(respectively, $\phi_N(\lambda):=\det\Phi_N(\lambda)=0$)}
\]
fully determines the spectrum of the Sturm-Liouville problem equipped with the above mentioned vertex conditions. 

\begin{remark}\label{phinphid}
 The characteristic function $\phi_N(\lambda)$, cf.\ the tables in Appendix \ref{TT},  is expressed via the values   $c_{j}(\sqrt{\lambda},\ell)$, 
$s_{j}(\sqrt{\lambda},\ell)$, $c'_{j}(\sqrt{\lambda}, \ell)$, $s'_{j}(\sqrt{\lambda},\ell)$, which, for brevity, and slightly abusing notation, we will denote by $c_j(\ell)$, $s_j(\ell)$, $c'_j(\ell)$, $s'_j(\ell)$, respectively. It is important to notice that the characteristic matrix $\Phi_D(\lambda)$ is lower block triangular, see also the discussion after Theorem \ref{Theorem 3.2}. As a result, the characteristic function $\phi_D(\lambda)$ is expressed via the values $c_j(\ell)$, $s_j(\ell)$, $s'_j(\ell)$ (notice that, here, $c'_j(\ell)$ is {\em not} included).
\hfill$\Diamond$\end{remark}

\begin{remark}\label{rem:LawPiv} The characteristic matrices $\Phi_D$ and $\Phi_N$ discussed above correspond to the case when at all vertices but the root we impose the {\em standard} boundary conditions (Neumann if the root is pendant and continuity plus Kirchhoff if it is not). One can similarly construct the respective matrices and their determinants $\varphi_{DA}$ and $\varphi_{NA}$ assuming {\em arbitrary} self-adjoint boundary conditions at all vertices but the root (in this notation, we use the letter $A$ in the subscript, say, in $\varphi_{NA}$ to indicate that the conditions at the root are Neumann (standard) while at all other vertices they are Arbitrary). In this notation $\phi_N=\varphi_{NN}$ and  $\phi_D=\varphi_{DN}$. We recall the following reduction formulas from \cite[Theorem 2.1]{LawP}: Let $T^{(1)}$ and $T^{(2)}$ be two trees having exactly one common vertex $\widehat{v}$ viewed as the root of the trees $T^{(1)}$, $T^{(2)}$ and $T:=T^{(1)}\cup T^{(2)}$. We impose arbitrary self-adjoint boundary conditions at all vertices but $\widehat{v}$, and consider the Dirichlet and standard conditions at $\widehat{v}$. Let $\varphi_{DA}(\cdot)$, respectively, $\varphi_{NA}(\cdot)$ denote the characteristic function for the tree $(\cdot)$ equipped with the Dirichlet, respectively, standard condition at $\widehat{v}$. Then
\begin{equation}\label{lawpf}\begin{split}
\varphi_{NA}(T)&=\varphi_{DA}(T^{(1)})\varphi_{NA}(T^{(2)})+\varphi_{NA}(T^{(1)})\varphi_{DA}(T^{(2)}),\\
\varphi_{DA}(T)&=\varphi_{DA}(T^{(1)})\varphi_{DA}(T^{(2)}).\hskip6cm\Diamond\end{split}
\end{equation}
\end{remark}

We now proceed with several elementary examples.

\begin{example}\label{ex:p=2} The characteristic matrices for a segment (see $p=2$ in Figure \ref{lowp}) are 
\begin{equation}\label{PHIp2}
\Phi_{DD}=\begin{bmatrix}1&0\\0&s(\ell)\end{bmatrix},
\Phi_{DN}=\begin{bmatrix}1&0\\c'(\ell)&s'(\ell)\end{bmatrix},
\Phi_{ND}=\begin{bmatrix}0&1\\c(\ell)&s(\ell)\end{bmatrix},
\Phi_{NN}=\begin{bmatrix}0&1\\c'(\ell)&s'(\ell)\end{bmatrix},
\end{equation}
where $DD$ means the Dirichlet condition at $v_0$ and at $v_1$, $DN$ means the Dirichlet condition at $v_0$ and Neumann at $v_1$, etc. Thus, in this notation $\Phi_D(\lambda)=\Phi_{DN}$ and $\Phi_N(\lambda)=\Phi_{NN}$.
\hfill$\Diamond$\end{example}

\begin{example}\label{ex:3.1}  The matrices $\Phi_D$ and $\Phi_N$ for the tree pictured in item $p=3(a)$  of Figure \ref{lowp} with equal potentials are, respectively, the following two $4\times 4$-matrices, 
\begin{equation}
\label{PHND}
\Phi_D(\lambda)=\begin{bmatrix} 1&0&0&0\\
0&1&0&0\\
c'(\ell)&0&s'(\ell)&0\\
0&c'(\ell)&0&s'(\ell)\end{bmatrix}, \quad
\Phi_N(\lambda)=\begin{bmatrix}1&-1&0&0\\0&0&1&1\\
c'(\ell)&0&s'(\ell)&0\\
0&c'(\ell)&0&s'(\ell)\end{bmatrix}.
\end{equation}
Indeed, the Dirichlet conditions at $v_0$ and Neumann conditions at $v_1$, $v_2$ could be written as $y_1(0)=0$, $y_2(0)=0$, $y_1'(\ell)=0$, $y_2'(\ell)=0$
leading by \eqref{sc} to $a_1=0$, $a_2=0$, $a_1c'(\ell)+b_1s'(\ell)=0$, $a_2c'(\ell)+b_2s'(\ell)=0$
while the Kurchhoff's condition at $v_0$ and Neumann conditions at $v_1$, $v_2$ could be written as $y_1(0)-y_2(0)=0$, $y_1'(0)+y_2'(0)=0$, $y_1'(\ell)=0$, $y_2'(\ell)=0$
leading by \eqref{sc} to $a_1-a_2=0$, $b_1+b_2=0$, $a_1c'(\ell)+b_1s'(\ell)=0$, $a_2c'(\ell)+b_2s'(\ell)=0$ and so the respective systems of $2g=4$ equations for the unknown vector $\bu=(a_1,a_2,b_1,b_2)^\top\in\bbC^{2g}$ have matrices given in \eqref{PHND}. We calculate $\phi_D(\lambda)=(s'(\ell))^2$ and $\phi_N(\lambda)=-2c'(\ell) s'(\ell)$. In particular, $\phi_D(\lambda)=\phi_N(\lambda)=0$ if and only if $s'(\ell)=0$; if this is the case then the kernels of the matrices $\Phi_D(\lambda)$ and $\Phi_N(\lambda)$ in \eqref{PHND} have a nonzero intersection. This will be used in the proof of Lemma \ref{rem:intersection} below.
\hfill$\Diamond$\end{example}

\begin{example}\label{ex:p=2b}
The matrices $\Phi_D$ and $\Phi_N$ for the tree pictured in item $p=3b$ of Figure \ref{lowp} are
\begin{equation}\label{PHI3b}
\Phi_D(\lambda)=\begin{bmatrix}1&0&0&0\\ c(\ell)&-1&s(\ell)&0\\c'(\ell)&0&s'(\ell)&-1\\0&c'(\ell)&0&s'(\ell)\end{bmatrix},
\Phi_N(\lambda)=\begin{bmatrix}0&0&1&0\\c(\ell)&-1&s(\ell)&0\\c'(\ell)&0&s'(\ell)&-1\\0&c'(\ell)&0&s'(\ell)\end{bmatrix};
\end{equation}
 the characteristic functions are $\phi_D(\lambda)=-\big(s'(\ell))^2+s(\ell) c'(\ell)\big)$ and $\phi_N(\lambda)=-c'(\ell)\big(c(\ell)+s'(\ell)\big)$, where, to simplify notation, we assumed that the potentials are equal on all edges.

 In Lemma \ref{rem:intersection} below we will use the fact that for the current example there are no values of $\lambda$ such that $\phi_D(\lambda)=\phi_N(\lambda)=0$. To see this, %one needs to remember that the Wronskian of the solutions $c(\sl,\cdot)$ and $s(\sl,\cdot)$ is never zero and to apply the Lagrange identity $s(\ell) c'(\ell)=c(\ell) s'(\ell)+1$. 
we note that the current example can be reduced to Example \ref{ex:p=2} where the graph is just one segment of length $2\ell$. The Dirichlet problem on the graph in item $p=3(b)$ is the DN-problem on the doubled segment while the Neumann problem is the NN-problem, and the eigenvalues of the DN- and NN-problems on an interval cannot 
coincide.
\hfill$\Diamond$\end{example}

%Examples \ref{ex:3.1} and \ref{ex:p=2b} show the difference between the case when the root is an interior vertex and is a pendant vertex; in particular,  the rows of the matrices $\Phi_D$ and $\Phi_N$ that correspond to all vertices but the root are the equal in the former case and are not the equal in the latter case. 

%The following example illustrates the general case presented in the tables in Appendix \ref{TT}.

\begin{example}\label{ex:2d} The matrices  $\Phi_D$, $\Phi_N$ for the tree pictured in item (b) of Figure \ref{snowfl} are 
 \[
 \begin{array}{c|ccccccccc}
 \Phi_D&a_1 & a_2 & a_3 & a_{31} & b_1 & b_2 & b_3 & b_{31} \\ \hline
 v_0 & 1& 0 & 0 & 0 & 0&0&0&0\\
  v_0 & 0& 1 & 0 & 0 & 0&0&0&0\\
  v_0 & 0& 0 & 1 & 0 & 0&0&0&0\\
  v_1 & c'(\ell)&0&0&0&s'(\ell)&0&0&0\\
    v_2 &0& c'(\ell)&0&0&0&s'(\ell)&0&0\\
    v_3&0 &0& c(\ell)&-1&0&0&s(\ell)&0\\
     v_3&0 &0& c'(\ell)&0&0&0&s'(\ell)&-1\\
      v_{31}&0&0 &0& c'(\ell)&0&0&0&s'(\ell)
 \end{array},\]
\[\begin{array}{c|ccccccccc}
 \Phi_N&a_1 & a_2 & a_3 & a_{31} & b_1 & b_2 & b_3 & b_{31} \\ \hline
 v_0 & 1& -1 & 0 & 0 & 0&0&0&0\\
  v_0 & 1& 0 & -1 & 0 & 0&0&0&0\\
  v_0 & 0& 0 & 0 & 0 & 1&1&1&0\\
  v_1 & c'(\ell)&0&0&0&s'(\ell)&0&0&0\\
    v_2 &0& c'(\ell)&0&0&0&s'(\ell)&0&0\\
    v_3&0 &0& c(\ell)&-1&0&0&s(\ell)&0\\
     v_3&0 &0& c'(\ell)&0&0&0&s'(\ell)&-1\\
      v_{31}&0&0 &0& c'(\ell)&0&0&0&s'(\ell)
 \end{array},\]
 where, to simplify notation, we assumed that the potentials are the same on all edges.
\hfill$\Diamond$\end{example}

Returning to the general discussion, let $\cT$ be a combinatorial  tree rooted at $v_0$ with an arbitrary number $d(v_0)$ of subtrees. In the discussion that follows we assume that the root is an interior vertex; the case when it is pendant is analogous.
To write down the respective characteristic matrices in the general case in more details  we will use the enumeration of the edges introduced in Remark \ref{rem:enume}. As mentioned above, we will insert in the boundary conditions the functions from \eqref{yj} with the unknown coefficients $a_j,b_j$ corresponding to the edges $e_1,\ldots,e_{d(v_0)}$,
$e_{11},\ldots,e_{1(d(v_1)-1)}$, \ldots, $e_{d(v_0)1},\ldots e_{d(v_0)(d(v_{d(v_0)})-1)}$,
$e_{111},\ldots, e_{11(d(v_{11})-1)}$, and so on, in this particular order. 

We introduce the following $(2g\times 1)$ vector $\bu$ that represents the
 unknown coefficients $a_j$ and $b_j$,
\begin{align}\label{ab}
&\bu:=\big(a_1,\ldots,a_{d(v_0)},a_{11},\ldots, a_{1,(d(v_1)-1)}, a_{21},\ldots,a_{d(v_0)1},\ldots,
a_{d(v_0)(d(v_{d(v_0)})-1)}, a_{111},\ldots,\\ \nonumber
& a_{11(d(v_{11})-1)},a_{121},\ldots,a_{12(d(v_{12})-1)}a_{131},\ldots, b_1,\ldots,b_{d(v_0)},b_{11},\ldots,b_{d(v_0)(d(v_{d(v_0)})-1)},\ldots\big)^\top\in\bbC^{2g\times1}.
\end{align}
The coefficients in \eqref{ab} will satisfy the system of $2g$ linear equations as there are $\sum_{v\in\cV}d(v)$ boundary conditions that one must satisfy. The characteristic matrices $\Phi_D$ and $\Phi_N$ would have different first $d(v_0)$ rows and equal remaining rows. Indeed, we write the Dirichlet conditions at $v_0$ as $y_1(0)=0$, \ldots, $y_{d(v_0)}(0)=0$ and the standard conditions at $v_0$ as $y_1(0)-y_2(0)=0$, \ldots, $y_1(0)-y_{d(v_0)}(0)=0$, $y_1'(0)+\ldots+y_{d(v_0)}'(0)=0$ while the standard conditions at each $v_{k_1}$, $k_1=1,\ldots,d(v_0)$, are written as $y_{k_1}(\ell)-y_{k_1k_2}(0)=0$ for $k_2=1,\ldots, d(v_{k_1})$ and $y_{k_1}'(\ell)-\sum_{k_2=1}^{d(v_{k_1})-1}y_{k_1k_2}'(0)=0$. In terms of $a$'s and $b$'s from \eqref{yj} this leads to the following $d(v_0)$ first equations: for the Problem D they are
 $a_1=0$, $a_2=0$, \ldots, $a_{d(v_0)}=0$, while for the Problem N they are $a_1-a_2=0$,\ldots, $a_1-a_{d(v_0)}=0$, $b_1+\ldots+b_{d_0}=0$. The next $d(v_{1})$ equations corresponding to the boundary conditions at $v_1$ involve the coefficients 
 $a_{1}, b_{1}$ and $a_{1k_2}$, $b_{1k_2}$ for $k_2=1,\ldots, d(v_1)-1$ and are the same for both Problems D and N. The next $d(v_{2})$ equations corresponding to the boundary conditions at $v_2$ involve the coefficients 
 $a_{2}, b_{2}$ and $a_{2k_2}$, $b_{2k_2}$ for $k_2=1,\ldots, d(v_2)-1$ and are the same for both Problems D and N. We continue inductively. 
 
 The matrices $\Phi_D$ and $\Phi_N$ are represented in the two tables in Appendix \ref{TT}. In the tables, to simplify notation, we assumed that the potentials are equal on all edges. In the more general case when they are not, $c(\ell)$, $s(\ell)$, $c'(\ell)$, $s'(\ell)$ must be equipped with subscripts indicating the number of the respective edge. The first $g$ columns in the tables correspond to the coefficients $a_j$'s in \eqref{yj} while the remaining $g$ columns to the coefficients $b_j$'s. The order of the columns of the matrices corresponds to the enumeration of the edges described in Remark \ref{rem:enume} while the numbering of the rows corresponds to our usual enumeration of the vertices: first $d(v_0)$ rows correspond to the boundary conditions at $v_0$, the next $d(v_1)$ rows to the boundary conditions at $v_1$, and so on, the next $d(v_{d(v_0)})$ rows correspond to the boundary conditions at $v_{d(v_0)}$, the next $d(v_{11})$ rows correspond to the boundary conditions at $v_{11}$, then at $v_{12}$, and so on, then at $v_{1(d(v_1)-1)}$, and so forth. We abbreviate $d_0=d(v_0)$, $d_1=d(v_1)$, etc.

\begin{remark}\label{rem:split}
 For future use, it is convenient to split the $(2g\times 2g)$-matrices $\Phi_D=\Phi_D(\lambda)$ and $\Phi_N=\Phi_N(\lambda)$ into the blocks $[\Phi^{ij}]_{i,j=1}^2$ where  the block $\Phi^{11}$ is of the size 
 $d_0\times d_0$, the block $\Phi^{12}$ is of the size $d_0\times(2g-d_0)$,  the block $\Phi^{21}$ is of the size $(2g-d_0)\times d_0$
 and  the block $\Phi^{22}$ is of the size $(2g-d_0)\times(2g-d_0)$. For the respective blocks of the matrices $\Phi_D$ and $\Phi_N$ we clearly have,
 \begin{equation}\label{blocks1}\begin{split}
 \Phi_D^{12}&=0,\, \Phi_N^{21}=\Phi_D^{21}=:\Phi^{21},\, \Phi_N^{22}=\Phi_D^{22}=:\Phi^{22},\,  \Phi_D^{11}  =\diag\{1,1,\ldots 1\}, \\
 \Phi_D&=\begin{bmatrix}\Phi_D^{11}&0_{d_0\times(2g-d_0)}\\
 \Phi^{21}&\Phi^{22}\end{bmatrix}, \, \Phi_N^{11}=\begin{bmatrix}1 &-1&\dots & 0 \\
  \dots & \dots & \dots & \dots \\ 1&0&\dots&-1\\
 0 & 0 &\dots & 0 \end{bmatrix}, \\
 \Phi_N^{12}&=\begin{bmatrix}0&\dots&0&0&\dots&0&0&\dots&0\\
 \dots&\dots&\dots& \dots&\dots&\dots&\dots&\dots&\dots\\
 0&\dots&0&0&\dots&0&0&\dots&0\\
0&\dots&0& 1&\dots&1&0&\dots&0\\\end{bmatrix},
 \end{split}
 \end{equation}
 where in the last block the left- and -rightmost zero blocks are of the size $d_0\times(g-d_0)$ while
 the row-vector $\begin{bmatrix}1&\dots&1\end{bmatrix}$ is of the size $1\times d_0$. For future use, we remark that 
 \begin{equation}\label{blocks2}
 \phi_D(\lambda)=\det\Phi_D(\lambda)=\det\Phi_D^{11}\cdot\det\Phi^{22}=\det\Phi^{22}.
 \end{equation}
since the matrix $\Phi_D(\lambda)$ is lower block-triangular. \hfill$\Diamond$\end{remark}

We associate the metric tree $T$ with the combinatorial tree ${\mathcal T}$ described in Section \ref{sec:2}.  When indicated, we will impose the following assumption.
\begin{hypothesis}\label{hyp:two}
We assume that all edges have the same potential $q$; moreover, the potential is symmetric  with respect to the midpoints of the edges, that is, $q(\ell-x)=q(x)$ for almost all $x\in[0,\ell]$. 
\end{hypothesis}
An important consequence of Hypothesis \ref{hyp:two} is that, cf.\ \cite{CP} and \cite[Proposition 6.3.1]{MP},
\begin{equation}\label{lagr}
s'(\sl,\ell)=c(\sl,\ell),\, (c(\sl,\ell))^2-1=s(\sl,\ell)c'(\sl,\ell).
\end{equation}

The following theorem for Problem N is a version of  \cite[Theorem 5.2]{CP} which, in turn, is based on the results of \cite{vB}; we also refer to Theorem 6.3.3, Corollary 6.3.10 and Theorem 6.4.2 in \cite{MP}. 

\begin{theorem}\label{Theorem  3.1} 
 Let $T$ be a tree with $p\geq 2$ vertices.  Assume Hypothesis \ref{hyp:two}.
  Then the spectrum of 
  Problem N (see \eqref{3.1}--\eqref{3.4}, \eqref{3.7})  coincides with the set of zeros of its characteristic function that can be computed by the formula
\begin{equation}
\label{3.8} 
\phi_N(\lambda)=(s(\sqrt{\lambda}, \ell))^{-1}\psi(c (\sqrt{\lambda}, \ell)),
\end{equation} 
where $\psi(z)=\det(-zD+A)$ is defined in \eqref{2.2}    with 
 $A$ being the adjacency matrix of $T$ and
$D$  defined in \eqref{2.1}.
\end{theorem}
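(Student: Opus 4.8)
The plan is to reduce the eigenvalue problem on the metric tree $T$ to the combinatorial eigenvalue problem governed by $\psi(z)=\det(-zD+A)$ by making a clever change of unknowns in the representation \eqref{yj}. First I would use the identities in \eqref{sc} together with Hypothesis \ref{hyp:two}, specifically \eqref{lagr} which gives $s'(\sl,\ell)=c(\sl,\ell)$ and $(c(\sl,\ell))^2-1=s(\sl,\ell)c'(\sl,\ell)$, to rewrite the transfer of a solution across a single edge. Writing $y_j=a_jc_j(\sl,x)+b_js_j(\sl,x)$, the values $(y_j(0),y_j(\ell))=(a_j, a_jc(\ell)+b_js(\ell))$ and the derivatives $(y_j'(0),y_j'(\ell))=(b_j, a_jc'(\ell)+b_js'(\ell))$ determine $a_j,b_j$ whenever $s(\ell)\neq0$. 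The idea is to introduce new variables $\xi_v$ indexed by the vertices $v$ of $T$: for an interior vertex the continuity condition \eqref{3.3}, \eqref{3.5} says precisely that all functions $y_j$ meeting at $v$ take a common value, which I call $\xi_v$; at a pendant vertex other than the root, combining the Neumann condition \eqref{3.2} $y_j'(\ell)=0$ with the edge relations and \eqref{lagr} expresses $y_j(\ell)$ in terms of $y_j(0)=\xi_{v'}$ where $v'$ is the other endpoint.

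The key step is to show that after this substitution the Kirchhoff conditions \eqref{3.4}, \eqref{3.6} become exactly the linear system $(-zD+A)\xi=0$ evaluated at $z=c(\sl,\ell)$, up to an overall scalar factor $s(\sl,\ell)^{-1}$. Concretely, on the edge $e_j$ from $v$ to $w$ one has $y_j(0)=\xi_v$, $y_j(\ell)=\xi_w$, and by \eqref{sc} together with Hypothesis \ref{hyp:two} the boundary derivatives satisfy $s(\ell)y_j'(0)=-c(\ell)\xi_v+\xi_w$ and $s(\ell)y_j'(\ell)=-\xi_v+c(\ell)\xi_w$ (using $c(\ell)c'(\ell)s(\ell)=c(\ell)(c(\ell)^2-1)/\,$\dots, i.e.\ \eqref{lagr}); I would verify these two identities by a direct one-line computation from \eqref{sc} and \eqref{lagr}. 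Summing these expressions at each interior vertex $v$ according to \eqref{3.4} or \eqref{3.6} yields $s(\ell)\big(\sum_{j:\,e_j\sim v}\pm y_j'(\cdot)\big)=-d(v)c(\ell)\xi_v+\sum_{w\sim v}\xi_w$, which is exactly the $v$-th row of $(-c(\ell)D+A)\xi$. At a pendant vertex $w\neq v_0$ incoming edge $e_j$, the Neumann condition $y_j'(\ell)=0$ forces $-\xi_v+c(\ell)\xi_w=0$, i.e.\ $\xi_v=c(\ell)\xi_w$, which is the $w$-th row $-c(\ell)\,1\cdot\xi_w+\xi_v=0$ since $d(w)=1$ — so again a row of $(-c(\ell)D+A)\xi$. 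Thus the full $p\times p$ system for $(\xi_v)_{v\in\cV}$ is $(-c(\ell)D+A)\xi=0$, whose determinant is $\psi(c(\sl,\ell))$.

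To finish I would argue the equality of eigenvalues and multiplicities. On the open set where $s(\sl,\ell)\neq0$ the map $\bu=(a_j,b_j)\mapsto\xi=(\xi_v)$ is a linear isomorphism between the solution space of \eqref{yj} satisfying all continuity and Neumann-at-pendant conditions and $\bbC^p$, and it carries the remaining (Kirchhoff and root) conditions onto $(-c(\sl,\ell)D+A)\xi=0$; hence $\dim\ker\Phi_N(\lambda)=\dim\ker(-c(\sl,\ell)D+A)$, so $\lambda\in\spec(L_N)$ iff $\psi(c(\sl,\ell))=0$, and the geometric multiplicities agree. The scalar factor $s(\sl,\ell)^{-1}$ in \eqref{3.8} is then a normalization that accounts for the elementary row operations (dividing the Kirchhoff rows by $s(\ell)$) relating $\det\Phi_N(\lambda)$ to $\det(-c(\ell)D+A)$; since each edge contributes one such division and there are $g=p-1$ edges, one should in fact get a power $s(\ell)^{-g}$ up to a nonzero constant, but the statement \eqref{3.8} records the normalization as stated in \cite{CP}, and I would simply cite \cite[Theorem 5.2]{CP} (or Theorem 6.3.3 in \cite{MP}) for the precise bookkeeping of this prefactor rather than re-deriving it. The zeros of $s(\sl,\ell)$ (the Dirichlet spectrum of a single edge) have to be handled separately, by a limiting/continuity argument on the entire functions involved, or by noting they are removable since $\phi_N$ is entire. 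The main obstacle I anticipate is precisely this last point — tracking the exact power of $s(\sl,\ell)$ and showing that the identity \eqref{3.8}, a priori valid only where $s(\sl,\ell)\neq0$, extends to all $\lambda\in\bbC$ as an identity of entire functions with matching multiplicities even at the exceptional points; everything else is the linear-algebraic substitution described above.
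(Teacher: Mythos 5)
Your reduction is correct, and it is in fact the same mechanism the paper relies on: the paper gives no self-contained proof of Theorem \ref{Theorem  3.1} at all, citing \cite[Theorem 5.2]{CP} (which rests on \cite{vB}) and \cite{MP}, and the vertex-value substitution you propose — $y_j(0)=\xi_v$, $y_j(\ell)=\xi_w$, with $s(\ell)y_j'(0)=-c(\ell)\xi_v+\xi_w$, $s(\ell)y_j'(\ell)=-\xi_v+c(\ell)\xi_w$ via \eqref{lagr} — is exactly the map $\iota(y)=(Y_v)_{v\in\cV}$ the paper itself deploys in Case 2 of the proof of Theorem \ref{Theorem 3.1prime} to identify $\ker(L_N-\lambda_0)$ with $\ker(-c(\sl,\ell)D+A)$ when $s(\sl,\ell)\neq0$. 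So the spectral-coincidence part of your argument (away from zeros of $s$) is sound and standard. Two caveats. First, your heuristic for the prefactor is off: "one division by $s$ per Kirchhoff row" would not give $s^{-g}$ or $s^{-1}$ by mere row scaling; the exact identity comes from genuinely eliminating the $2g$ edge unknowns $(a_j,b_j)$ down to the $p$ vertex unknowns, and the general bookkeeping (recorded in the paper as \eqref{phind}) gives $\phi_N=(s(\sl,\ell))^{g-p}\psi(c(\sl,\ell))$, which reduces to \eqref{3.8} only because $g=p-1$ on a tree. Deferring this to \cite{CP} is legitimate and is precisely what the paper does, but your stated count should not be presented as if it nearly reproduces the exponent. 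Second, the exceptional set $s(\sl,\ell)=0$ is not just a removability remark: at such $\lambda$ one has $c(\sl,\ell)=\pm1$ and $\psi(\pm1)=0$ automatically, so "$\lambda\in\spec(L_N)$ iff $\psi(c(\sl,\ell))=0$" fails there, and the correct statement at these points uses the factorization $\psi(z)=(z^2-1)\psi_0(z)$ together with \eqref{lagr} to write $(s(\sl,\ell))^{-1}\psi(c(\sl,\ell))=c'(\sl,\ell)\,\psi_0(c(\sl,\ell))$ — which is exactly how the paper closes this case (Case 1 of the proof of Theorem \ref{Theorem 3.1prime}). Filling in that one identity would turn your acknowledged "obstacle" into a complete argument.
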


Under Hypothesis \ref{hyp:two}, if $\lambda$ is a zero of the function $\lambda\mapsto s(\sqrt{\lambda}, \ell)$ then it is a zero of the function  $\lambda\mapsto \psi(c (\sqrt{\lambda}, \ell))$, and thus the ratio in \eqref{3.8} is well-defined, see
\cite[Theorem 5.2]{CP}. Indeed, we recall from \cite[Section 6]{MP} general relations 
\begin{equation}\label{phind}
\phi_N(\lambda)=(s(\sqrt{\lambda},\ell))^{g-p}\psi(c(\sqrt{\lambda},\ell))\text{ and }\phi_D(\lambda)=(s(\sqrt{\lambda},\ell))^{g-p+1}\widehat{\psi}(c(\sqrt{\lambda},\ell)) 
\end{equation}
that hold for graphs that are even more general than trees.
In particular, formulas \eqref{phind} show that if we know the functions $\phi_N(\lambda)$ and $\phi_D(\lambda)$ then we know the functions $\psi$ and $\widehat{\psi}$, and vice versa.

Our next result deals with multiplicities of the eigenvalues of $L_N$; we refer to  \cite[Section 6.3]{MP} and to \cite{RS12} for discussions on this topic.

\begin{theorem}\label{Theorem 3.1prime} Assume Hypothesis \ref{hypo:stass}.  The multiplicity of any eigenvalue $\lambda_0\in\spec(L_N)$ is equal to the multiplicity of $\lambda_0$ as a zero of the function $\lambda\mapsto\phi_N(\lambda)$.
\end{theorem}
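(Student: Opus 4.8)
The plan is to relate the algebraic multiplicity of $\lambda_0$ as an eigenvalue of $L_N$ to the order of vanishing of $\phi_N(\lambda)=\det\Phi_N(\lambda)$, using the fact — recalled just before the statement of Theorem \ref{Theorem 3.1prime} — that the null-space of $\Phi_N(\lambda)$ is isomorphic to the eigenspace of $L_N$ at $\lambda$. Since $L_N$ is self-adjoint (under Hypothesis \ref{hypo:stass} all vertex conditions are standard, hence self-adjoint), its eigenvalues are semisimple, so the algebraic and geometric multiplicities coincide. Thus it suffices to prove that for a self-adjoint Sturm--Liouville realization, the \emph{geometric} multiplicity $\dim\ker L_N-\lambda_0$, which equals $\dim\ker\Phi_N(\lambda_0)$, coincides with the order of the zero of $\det\Phi_N(\lambda)$ at $\lambda_0$.

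First I would set up the analytic framework: the entries of $\Phi_N(\lambda)$ are built from $c_j(\sqrt\lambda,\ell)$, $s_j(\sqrt\lambda,\ell)$, $c_j'(\sqrt\lambda,\ell)$, $s_j'(\sqrt\lambda,\ell)$ together with the constant combinatorial coefficients, and by the cited results (Definition 12.2.2, Lemma 12.2.4, Theorem 12.2.9 in \cite{MP0}) these are entire in $\lambda$; hence $\lambda\mapsto\Phi_N(\lambda)$ is an entire matrix-valued function and $\phi_N=\det\Phi_N$ is entire. The key general principle is: if $M(\lambda)$ is an entire $n\times n$ matrix function whose values at a point $\lambda_0$ have nontrivial kernel, and if the associated boundary-value \emph{operator pencil} is equivalent to $\lambda-L$ for a self-adjoint operator $L$ with $\lambda_0$ a semisimple eigenvalue of multiplicity $m$, then $\det M$ has a zero of order exactly $m$ at $\lambda_0$. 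I would make this precise by invoking the standard theory of the connection between geometric/algebraic multiplicities of eigenvalues and the order of zeros of characteristic determinants for quantum graphs — this is exactly the material the authors flagged with the references \cite{BerkKuch, Kostr-Shrader, Kuraso, Pokornyj, RS12, Sch06} in the paragraph preceding the construction of the characteristic matrix. Concretely, one shows $\Phi_N(\lambda)$ is \emph{analytically equivalent} near $\lambda_0$ to $\mathrm{diag}((\lambda-\lambda_0)I_m, I_{2g-m})$ up to invertible analytic left/right factors, whence $\det\Phi_N(\lambda)=(\lambda-\lambda_0)^m h(\lambda)$ with $h(\lambda_0)\neq0$.

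The cleanest route to that equivalence uses self-adjointness directly. Since $L_N$ is self-adjoint and $\lambda_0$ is real, the eigenvalue is geometrically and algebraically of the same multiplicity $m=\dim\ker\Phi_N(\lambda_0)$; moreover, because there are no generalized eigenvectors, the resolvent $(L_N-\lambda)^{-1}$ has a simple pole at $\lambda_0$. One then expresses $\det\Phi_N(\lambda)$ (or rather its order of vanishing) in terms of the rank drop of $\Phi_N(\lambda_0)$: combining the self-adjoint (hence normal-eigenvalue, pole-of-order-one) structure with the fact that the eigenspace dimension is constant-rank-deficiency $m$, a Smith-type local factorization of the analytic matrix $\Phi_N(\lambda)$ at $\lambda_0$ forces the total multiplicity of the zero of the determinant to equal the sum of the partial multiplicities, which — by semisimplicity — is exactly $m$. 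I would assemble this from the quoted references rather than reprove it.

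The main obstacle is verifying that the \emph{algebraic} multiplicity furnished by the determinant (the order of the zero) cannot exceed the geometric one; a priori $\det\Phi_N$ could vanish to higher order if the analytic perturbation $\Phi_N(\lambda)$ had nontrivial partial multiplicities even though $L_N$ is semisimple. Ruling this out is precisely where self-adjointness of $L_N$ must be used: one needs the equivalence between the Jordan structure of the operator pencil $\Phi_N(\lambda)$ and that of $\lambda-L_N$ — i.e., that $\Phi_N(\lambda)$ is (locally, or globally after suitable factorization) a \emph{linearization} of $L_N$ — and then self-adjointness kills all partial multiplicities beyond $1$. I expect to handle this by citing the linearization/equivalence results for quantum-graph secular determinants in \cite{BerkKuch, RS12} (and, for the Sturm--Liouville specifics, \cite{MP0}), so that the remaining argument reduces to the semisimplicity of eigenvalues of a self-adjoint operator, which is immediate.
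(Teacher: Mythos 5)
There is a genuine gap, and it sits exactly where you located the ``main obstacle.'' The identification $\dim\ker(L_N-\lambda_0)=\dim\ker\Phi_N(\lambda_0)$ is indeed available, and self-adjointness of $L_N$ does give semisimplicity of $\lambda_0$ as an eigenvalue of the \emph{operator}; but none of this controls the local Smith form of the analytic matrix family $\lambda\mapsto\Phi_N(\lambda)$. The statement that $\Phi_N(\lambda)$ is locally analytically equivalent to $\diag\big((\lambda-\lambda_0)I_m,\,I_{2g-m}\big)$ --- equivalently, that all partial multiplicities are $1$, so that the order of the zero of $\det\Phi_N$ equals $\dim\ker\Phi_N(\lambda_0)$ --- is not a formal consequence of self-adjointness of $L_N$: the matrix $\Phi_N(\lambda)$ is not $\lambda-L_N$ in any canonical sense, its entries depend transcendentally on $\lambda$ through $c_j(\sqrt\lambda,\ell)$, $s_j(\sqrt\lambda,\ell)$ and their derivatives, and nothing prevents a priori a situation of the type ``kernel of dimension one but determinant vanishing to order two'' (a $1\times1$ family $(\lambda-\lambda_0)^2$ with a simple eigenvalue illustrates that pointwise coincidence of kernels with eigenspaces carries no information about the vanishing order). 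The plan to close this by citing ``linearization/equivalence results'' from the quantum-graph literature is essentially circular: the equality of multiplicities for precisely this characteristic matrix, under the hypotheses of the theorem (arbitrary real $L^2$ potentials, all eigenvalues including those with $s(\sqrt{\lambda_0},\ell)=0$), is the content of Theorem \ref{Theorem 3.1prime} itself, and the paper gives a full proof exactly because no off-the-shelf statement covers it.

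For comparison, the paper's argument gets the missing control from explicit structure rather than abstraction. Under the additional symmetry Hypothesis \ref{hyp:two} it uses \eqref{3.8} to transfer the question to the finite self-adjoint pencil $-zD+A$: when $s(\sqrt{\lambda_0},\ell)\neq0$ an explicit map $\iota$ identifies $\ker(L_N-\lambda_0)$ with $\ker(-z_0D+A)$, $z_0=c(\sqrt{\lambda_0},\ell)$, and for the Hermitian matrix $-z_0D+A$ geometric and algebraic multiplicities agree, which is what makes the order of the determinant's zero match; the exceptional case $s(\sqrt{\lambda_0},\ell)=0$ is handled separately using \eqref{lagr} and the simplicity of the zeros of $c'(\cdot,\ell)$. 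The general case of Hypothesis \ref{hypo:stass} is then reached by a homotopy of potentials, combining Kato's analytic perturbation theory for the eigenvalue curves of $L_N(\eta)$ with the tracking of multiplicities of zero curves of $\phi_N(\cdot,\eta)$ via \cite[Theorem 9.1.1]{MP0}. If you want to salvage your route, you would have to supply a genuine argument that the partial multiplicities of $\Phi_N(\lambda)$ at $\lambda_0$ are all equal to one (for instance via a derivative/Wronskian identity or a monotonicity structure in $\lambda$), and it is unlikely this can be done without using the specific algebraic relations the paper exploits.
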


\begin{proof} The proof consists of two parts: 
First, we assume that Hypothesis \ref{hyp:two} holds. We fix an eigenvalue $\lambda_0\in\spec(L_N)$ and consider two cases.

{\em Case 1.\,} Assume that $s(\sqrt{\lambda_0},\ell)=0$. As it is well-known, $+ 1$ and $-1$ belong to the spectrum of the pencil $-zD+A$ and so $\psi(z)=(z^2-1)\psi_0(z)$, where $\psi_0$ is a polynomial such that $\psi_0(\pm1)\neq0$, see, e.g., \cite[Section 6.3]{MP0}. Using \eqref{3.8} and \eqref{lagr} we conclude that 
$\phi_N(\lambda)=c'(\sl,\ell)\psi_0(c(\sl,\ell))$ where $\psi_0(c(\sqrt{\lambda_0},\ell))\neq0$ since $(c(\sqrt{\lambda_0},\ell))^2-1=0$. Since the roots of the function $\sl\mapsto c'(\sl,\ell)$ are simple, $\lambda_0$ is a simple root of $\phi_N(\cdot)$. Thus, $0$ is a simple eigenvalue of $\Phi_N(\lambda)$ and so $\lambda_0$ is a simple eigenvalue of $L_N$ as $\ker(L_N-\lambda_0)$ and $\ker\Phi_N(\lambda_0)$ are isomorphic by the construction of the characteristic matrix $\Phi_N(\lambda)$.

{\em Case 2.\,} Assume that $s(\sqrt{\lambda_0},\ell)\neq0$.  For brevity, we introduce notation $z_0=c(\sqrt{\lambda_0},\ell))$.  We claim that \begin{equation}\label{KCLAIM}
\text{$\ker (L_N-\lambda_0)$ and $\ker(-z_0D+A)$ are isomorphic.}
\end{equation} As soon as the claim is proved, the required equality of the multiplicities follows because the matrix $-z_0D+A$ is self-adjoint and thus the dimension of its kernel, that is, the geometric multiplicity of $0\in\spec(-z_0D+A)$ is equal to its algebraic multiplicity, that is, to the multiplicity of $z_0$ as a zero of the function $z\mapsto\psi(z)=\det(-zD+A)$, and therefore, by \eqref{3.8}, to the multiplicity of $\lambda_0$ as a zero of the function $\lambda\mapsto\phi_N(\lambda)$.

To begin the proof of the claim in \eqref{KCLAIM}, we fix any $y=(y_j)_{j=1}^g\in\ker(L_N-\lambda_0)$ and define the vector $\iota(y):=(Y_v)_{v\in\cV}\in\bbC^{p\times 1}$ as follows. If $v$ is a pendant vertex with the incoming edge $e_j$, we set 
$Y_v=(s(\sqrt{\lambda_0},\ell))^{-1}y_j(\ell)$. If $v$ is an interior vertex with the incoming edge $e_j$ and the outgoing edges $e_k$, we set 
\begin{equation}\label{defYv}
Y_v=(s(\sqrt{\lambda_0},\ell))^{-1}y_j(\ell)=(s(\sqrt{\lambda_0},\ell))^{-1}y_k(0), \, k=2,\ldots,d(v).
\end{equation} If $v=v_0$ is the root with the outgoing edges $e_k$, we set $Y_{v_0}=(s(\sqrt{\lambda_0},\ell))^{-1}y_k(0)$, $k=1,\ldots,d(v_0)$. Since the eigenfunction $y=(y_j)_{j=1}^g$ satisfies the continuity conditions at all vertices, the vector $\iota(y)$ is well defined. The map $\iota$ is injective because $Y_v=0$ for each $v\in\cV$ would mean that $y_j$ solves the boundary value problem on each edge with Dirichlet conditions at both ends and therefore by the calculation of $\phi_{DD}$ in Example \ref{ex:p=2} one would had $s(\sqrt{\lambda_0},\ell)=0$ which is not the case. Since both subspaces in \eqref{KCLAIM} are finite dimensional, to prove that the injective map $\iota$ is an isomorphism between the two subspaces, it remains to show that $\iota$ maps $y$ into an element of $\ker(-z_0D+A)$.

To begin the proof of the assertion $(Y_v)_{v\in\cV}\in\ker(-z_0D+A)$, we consider any edge $e=[v',v'']$ outgoing from a vertex $v'$ and incoming to a vertex $v''$, and rewrite the respective eigenfunction $y_e=y_j$, $e=e_j$, as follows:
\begin{equation}\label{ye}
y_e(x)=s(\sqrt{\lambda_0},\ell)Y_{v'}c(\sqrt{\lambda_0},x)+\big(Y_{v''}-z_0Y_{v'}\big)s(\sqrt{\lambda_0},x),\, x\in[0,\ell]\approx e.
\end{equation}
Indeed, $y_e(0)=s(\sqrt{\lambda_0},\ell)Y_{v'}$ while $y_e(\ell)=s(\sqrt{\lambda_0},\ell)Y_{v''}$ which corresponds to the definition of $Y_v$ above. Furthermore, differentiating \eqref{ye} and using identities \eqref{lagr} gives
\begin{equation}\label{yeder}
y'_e(0)=Y_{v''}-z_0Y_{v'},\, y'_e(\ell)=-Y_{v'}+z_0Y_{v''}.
\end{equation}
We will now use the Kirchhoff condition at each vertex $v\in\cV$. We give the calculation only for the case when $v\neq v_0$ is an interior  vertex with an incoming edge $e_j=[v_1,v]$ and the outgoing edges $e_k=[v,v_k]$, $k=2,\ldots,d(v)$; all other cases are analogous. 
Then \eqref{yeder} yields
\begin{equation}\label{yeder2}\begin{split}
0&=y'_j(\ell)-\sum_{k=2}^{d(v)}y'_k(0)=-Y_{v_1}+z_0Y_{v}-\sum_{k=2}^{d(v)}\big(Y_{v_k}-z_0Y_v\big)\\
&=z_0d(v)Y_v-\sum_{k=1}^{d(v)}Y_{v_k}=-\big(-z_0d(v)Y_v+\sum_{v'\in\cV}a_{vv'}Y_{v'}\big),
\end{split}\end{equation}
since $A=(a_{vv'})_{v,v'\in\cV}$ is the adjacency matrix. Thus, $(-z_0D+A)\iota(y)=0$ as required. This completes the first part of the proof of the theorem.

In the second part of the proof we use Hypothesis \ref{hypo:stass} and make a homotopy reduction to the case of Hypothesis \ref{hyp:two} assumed in the first part. To begin, we introduce the following family of potentials,
\begin{equation}\label{qeta}
q(x,\eta)=\eta^2q_1(\ell-x)+\eta\big(q_1(x)-q_j(x)\big)+q_j(x),\, x\in e_j, j=1,\ldots,g, \eta\in[0,1],
\end{equation}
where $q_1$ is the potential on $e_1$. Clearly, $q(\cdot, 0)$ is the original potential from \eqref{3.1} while the potential $q(\cdot,1)$ satisfies Hypothesis \ref{hyp:two}. 

Let $L_N(\eta)$ be the operator with the potential $q(\cdot, \eta)$, denote by $n(\lambda,\eta)=\dim\ker(L_N(\eta)-\lambda)$ the (geometric) multiplicity of $\lambda\in\spec(L_N(\eta))$, and consider the eigenvalue curves $\lambda(\cdot)$ given by the non-equal eigenvalues of $L_N(\lambda)$ ordered such that 
$\lambda_1(\eta)=\ldots=\lambda_{n_1}(\eta)<\lambda_{n_1+1}(\eta)=\ldots=\lambda_{n_1+n_2}(\eta)<\ldots$; here we denoted $n_1=n(\lambda_1(\eta),\eta)$, $n_2=n(\lambda_{n_1+1}(\eta),\eta)$, etc.
Since $L_N(\lambda)$ is a holomorphic in $\eta$ family of type (A) of selfadjoint operators, the general perturbation theory from \cite[Section VII.3]{Kato} tells us that the eigenvalue curves are holomorphic and can intersect only at finitely many points. Without loss of generality we will assume that the intersection points are not at $\eta=0$ or $\eta=1$; otherwise, we perturb the potential by a small constant. In particular, it follows from the perturbation theory that $n(\lambda(1),1)=n(\lambda(0),0)$ for each eigenvalue curve $\lambda(\cdot)$. %Also, if two curves, say, $\lambda'(\cdot)$ and $\lambda''(\cdot)$, intersect each other at a point $(\lambda_0,\eta_0)$ then $n(\lambda_0,\eta_0)=n(\lambda'(\eta_0),\eta_0)+n(\lambda''(\eta_0),\eta_0)$.

For any $\lambda_0\in\spec(L_N(\eta_0))$ so that $\phi_N(\lambda_0,\eta_0)=0$
 we further denote by $m(\lambda_0,\eta_0)$ the multiplicity of $\lambda_0$ as a zero of the function $\lambda\mapsto\phi_N(\lambda,\eta_0)$, where $\phi_N(\lambda,\eta)=\det\Phi_N(\lambda,\eta)$ is the characteristic function of the operator $L_N(\eta)$.
 In this notation, our objective is to show that $n(\lambda(0),0)=m(\lambda(0),0)$ for any eigenvalue curve $\lambda(\cdot)$. We claim that $m(\lambda(0),0)=m(\lambda(1),1)$. 
 
 Assuming the claim, we use the fact that $q(\cdot,1)$ satisfies Hypothesis \ref{hyp:two} and so $n(\lambda(1),1)=m(\lambda(1),1)$ as established in the first part of the proof. Combining this with the above mentioned equality $n(\lambda(1),1)=n(\lambda(0),0)$, we have $n(\lambda(0),0)=n(\lambda(1),1)=m(\lambda(1),1)=m(\lambda(0),0)$, as required.
 
 To prove the claim $m(\lambda(1),1)=m(\lambda(0),0)$, we will use  \cite[Theorem 9.1.1]{MP0} dealing with the multiplicities of zero curves of the analytic function $\phi_N(\lambda,\eta)$. Indeed, by this theorem, if $\lambda_0$ is a zero of the function  $\phi_N(\cdot, \eta_0)$ of multiplicity $m(\lambda_0,\eta_0)$ then there exist natural numbers $l,p_k,m_k$, $k=1,\ldots,l$, such that $\sum_{k=1}^lp_km_k=m(\lambda_0,\eta_0)$, the function $\phi_N(\cdot,\eta)$ for all $\eta$ near $\eta_0$ has $m(\lambda_0,\eta_0)$ zeros near $\lambda_0$ and the zero curves can be arranged into groups $\lambda_{kj}(\eta)$, $j=1,\ldots,p_k$, $k=1,\ldots,l$, such that they are pairwise different and each $\lambda_{kj}(\eta)$ is a root of $\phi_N(\cdot,\eta)$ of multiplicity $m_k$. Since the zeros of the function $\phi_N(\cdot,\eta)$ are the eigenvalues of the operator $L_N(\eta)$, the zero curves $\lambda_{kj}(\cdot)$ are precisely the eigenvalue curves $\lambda(\cdot)$ for $L_N(\cdot)$ that intersect at $(\lambda_0,\eta_0)$. Thus, the
 curves $\lambda_{kj}$ are holomorphic. Since $\lambda_{kj}$ can be represented by Puiseux series (9.1.1) in \cite{MP0} that uses $(\eta-\eta_0)^{1/p_k}$, one must have $p_k=1$ and therefore $j=1$. Also, it follows from \cite[Theorem 9.1.1]{MP0} that
 for each eigenvalue curve $\lambda(\cdot)=\lambda_{k1}(\cdot)$ we have
 $m(\lambda(\eta),\eta)=m_k$ for all $\eta\neq\eta_0$ near $\eta_0$, that is, $m(\lambda(\eta),\eta)$ is locally constant for $\eta\neq\eta_0$ (and changes to $m(\lambda_0,\eta_0)=\sum_{k=1}^lm_k$ at $\eta=\eta_0$). Going through all (finitely many, if any) intersection points along the curve $\lambda(\cdot)$ yields $m(\lambda(0),0)=m(\lambda(1),1)$.
\end{proof}

%By our convention, the standard boundary conditions include the Neumann condition at the root $v_0$ when the root is a pendant vertex and Kirchhoff's plus continuity condition when the root is not a pendant vertex.
%Problems with these conditions are sometimes called the {\em Neumann problems} to contrast them with the {\em Dirichlet problems} that we describe next. 

 We now discuss the Dirichlet problem (Problem D). In this case, as in Section \ref{sec:2},
 we can view  $T$ as a union of $d(v_0)$  subtrees   $T_1$, $T_2$, ..., $T _{d(v_0)}$ which have common vertex $v_0$, and pose spectral problems on each of the subtrees separately assuming that  the Dirichlet condition is imposed at the root $v_0$ while at all other   vertices we keep the standard boundary conditions. As a result, we obtain  $d(v_0)$ separate problems on the subtrees.  
 
% We will use the following result. Let $T_i$ be an  equilateral tree with $g\geq 2$ edges, $p$ vertices, $p_{pen}$ pendant vertices. Let $r$ ($0\leq r \leq p_{pen}$) be the number of pendant vertices with the Dirichlet conditions. Let the standard conditions be imposed at all  the other vertices.
 We denote by $\widehat{T}_{k}$ the tree obtained by removing the pendant vertex with the Dirichlet boundary condition (the root) and the edge  incident to it in $T_{k}$. 
%For convenience, we denote by $v_{r+1}$, $v_{r+2}$,..., $v_{p}$ the vertices of $\widehat{T}$.
 Let $\widehat{A}_{T_k}=A_{\widehat{T}_k}$  be the adjacency matrix of ${\widehat T}_{k}$, let
%$\widehat{D}=diag \{\widehat{d}(v_1), \widehat{d}(v_2), ...,\widehat{d}(v_{p-r})\}$ and 
$\widehat{D}_{T_{k}}=\diag \{d(v_{{k},1}), d(v_{{k},2}), ..., d(v_{{k},p_{k}-1})\}$, where 
$d(v_{{k},j})$ is the degree of the vertex $v_{{k},j}$ in $T_{k}$ and $p_{k}$ is the number of vertices $\{v_0, v_{{k},1}, ..., v_{{k},p_{k}-1}\}$ in $T_{k}$. 
We further consider the polynomial $\widehat{\psi}_{k}(z)$ defined by 
\begin{equation}
\label{3.10}
\widehat{\psi}_{{k}}(z):=\det(-z\widehat{D}_{T_{k}}+\widehat{A}_{T_k}),\quad k=1,\dots,d(v_0).
\end{equation}
The first part of our next result is Theorem 6.4.2 of \cite{MP} adapted to the case of a tree with the Dirichlet condition at one of the  vertices while the proof of the last statement in the next theorem is analogous to the proof of Theorem \ref{Theorem 3.1prime}.

\begin{theorem}\label{Theorem 3.2} Assume Hypothesis \ref{hyp:two}. Let $T_{k}$, $k=1,\ldots,d(v_0)$, be the subtree of $T$ with one edge rooted at its pendant vertex $v_0$. Let the Dirichlet condition be imposed at the root and the standard conditions at all other vertices.  Then the spectrum of the Dirichlet problem (\ref{3.1})--(\ref{3.4}), (\ref{3.9})  on $T_{k}$ coincides with the set of zeros of its characteristic function that can be computed by the formula
\begin{equation}
\label{3.11}
\phi_{D,{k}}(\lambda)=\widehat{\psi}_{k}(c(\sqrt{\lambda},\ell)), \quad {k}=1,\dots, d(v_0),
\end{equation}
where  $\widehat{\psi}_{k}(z)$ is given in \eqref{3.10}. Furthermore, under Hypothesis \ref{hypo:stass} the multiplicity of an eigenvalue $\lambda_0\in\spec(L_D)$ is equal to the multiplicity $\lambda_0$ as a zero of the function $\lambda\mapsto\phi_D(\lambda)$.
 \end{theorem}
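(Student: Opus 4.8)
The plan is to follow the two-part scheme of the proof of Theorem~\ref{Theorem 3.1prime}. Since $L_D$ is the orthogonal sum $\bigoplus_{k=1}^{d(v_0)}L_{D,k}$ of the Dirichlet operators on the subtrees $T_k$, with $\phi_D=\prod_{k=1}^{d(v_0)}\phi_{D,k}$ and $\phi_{D,k}(\lambda)=\widehat\psi_k(c(\sqrt\lambda,\ell))$ by \eqref{3.11}, it is enough to prove the equality of multiplicities for a single $L_{D,k}$ and then sum over $k$. So I would fix one subtree, call it again $T$ (a tree with a distinguished pendant vertex $v_0$ carrying the Dirichlet condition and standard conditions elsewhere), write $g=p-1$ and $\phi_D(\lambda)=\widehat\psi(c(\sqrt\lambda,\ell))$. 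First assume Hypothesis~\ref{hyp:two}, fix $\lambda_0\in\spec(L_D)$, and set $z_0:=c(\sqrt{\lambda_0},\ell)$, so that $\widehat\psi(z_0)=0$. The first observation is that $s(\sqrt{\lambda_0},\ell)\neq0$: if it were zero, then propagating the Dirichlet relation $a=y(0)=0$ at $v_0$ and the continuity conditions toward the leaves forces every component of an eigenfunction to be a scalar multiple of $s(\sqrt{\lambda_0},\cdot)$ vanishing at $\ell$, after which the Neumann/Kirchhoff conditions worked from the leaves upward — together with the fact that a nontrivial solution of the Sturm--Liouville equation cannot vanish with its derivative at a point — force the eigenfunction to be $0$, a contradiction. (Equivalently, $s(\sqrt{\lambda_0},\ell)=0$ gives $z_0=\pm1$ under Hypothesis~\ref{hyp:two}, while $\widehat\psi(\pm1)\neq0$, these numbers being the $v_0$-cofactors of the rank-$(p-1)$ matrices $-D+A$ and $D+A$ whose kernels are spanned by the everywhere-nonzero vectors $\mathbf1$ and the $\pm1$-bipartition vector.)

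The second step, still under Hypothesis~\ref{hyp:two}, is to build a linear isomorphism $\iota\colon\ker(L_D-\lambda_0)\to\ker(-z_0\widehat D+\widehat A)$ exactly in the spirit of \eqref{defYv}: to each vertex $v\neq v_0$ assign the common boundary value of the eigenfunction scaled by $(s(\sqrt{\lambda_0},\ell))^{-1}$, the Dirichlet condition forcing the value $0$ at $v_0$. Continuity makes $\iota$ well defined, and injectivity holds because a zero image would make every $y_j$ solve the Dirichlet--Dirichlet problem on its edge, contradicting $s(\sqrt{\lambda_0},\ell)\neq0$. To verify $\iota(y)\in\ker(-z_0\widehat D+\widehat A)$, one rewrites each edge function as in \eqref{ye}, computes the boundary derivatives as in \eqref{yeder}, and imposes the Kirchhoff (or Neumann) condition at each vertex $v\neq v_0$; the identity obtained is precisely the $v$-th row of $(-z_0\widehat D+\widehat A)\iota(y)=0$, the crucial points being that there is no equation at $v_0$ (it carries a Dirichlet, not a Kirchhoff, condition), which is why one deletes the $v_0$-row and $v_0$-column while $\widehat D$ still records the degrees in $T$, and that for the first-generation vertices the parent contribution drops out because the value at $v_0$ is $0$. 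Since $-z_0\widehat D+\widehat A$ is real symmetric, the geometric multiplicity of its eigenvalue $0$ equals the algebraic one, i.e.\ the multiplicity of $z_0$ as a zero of $\widehat\psi$; and since $s(\sqrt{\lambda_0},\ell)\neq0$ the function $\lambda\mapsto c(\sqrt\lambda,\ell)-z_0$ has a simple zero at $\lambda_0$ (the same fact used implicitly in Case~2 of the proof of Theorem~\ref{Theorem 3.1prime}, cf.\ \cite{MP0}), so by \eqref{3.11} this common value is also the multiplicity of $\lambda_0$ as a zero of $\phi_D$. Together with $\iota$ this proves the theorem under Hypothesis~\ref{hyp:two}.

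The second part removes Hypothesis~\ref{hyp:two} by repeating verbatim the homotopy argument of the second part of the proof of Theorem~\ref{Theorem 3.1prime}: connect the given potential to a symmetric one through the family \eqref{qeta}, let $L_D(\eta)$ be the corresponding operators (a holomorphic self-adjoint family of type (A), so its eigenvalue curves are holomorphic and meet at only finitely many values of $\eta$), use perturbation theory to get $n(\lambda(1),1)=n(\lambda(0),0)$ for the geometric multiplicities along each curve, apply \cite[Theorem 9.1.1]{MP0} to the analytic function $\phi_D(\lambda,\eta)=\det\Phi_D(\lambda,\eta)$ — the Puiseux-series argument forcing all branching indices to equal $1$ — to conclude that the zero-multiplicity is locally constant along each eigenvalue curve away from its finitely many intersection points, whence $m(\lambda(1),1)=m(\lambda(0),0)$, and finally combine this with the first part ($n=m$ at $\eta=1$) to obtain $n(\lambda(0),0)=m(\lambda(0),0)$.

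The main obstacle I anticipate is the second step of the first part: correctly matching the Kirchhoff/Neumann bookkeeping at all vertices $v\neq v_0$ with the rows of the modified matrix $-z_0\widehat D+\widehat A$, and in particular verifying that deleting the $v_0$-equation while retaining the $T$-degrees in $\widehat D$ is exactly what places $\iota(y)$ in the kernel of that matrix. The preliminary non-vanishing $s(\sqrt{\lambda_0},\ell)\neq0$ (the analogue of the statement that ``Case~1 is vacuous'' for Problem~D) must also be dispatched, since both the construction of $\iota$ and the transfer of multiplicities rely on it. The homotopy in the second part reuses the machinery already developed for $L_N$ and should present no new difficulty.
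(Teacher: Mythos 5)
Your proposal is correct and takes essentially the same route the paper intends: the paper handles the first part of the statement by citing \cite[Theorem 6.4.2]{MP} (you likewise take \eqref{3.11} as given) and settles the multiplicity claim with the words ``analogous to the proof of Theorem \ref{Theorem 3.1prime}'', which is precisely the two-step scheme you carry out — the vertex-value isomorphism $\iota$ onto $\ker(-z_0\widehat D+\widehat A)$ under Hypothesis \ref{hyp:two}, preceded by ruling out $s(\sqrt{\lambda_0},\ell)=0$, followed by the homotopy \eqref{qeta} together with \cite[Theorem 9.1.1]{MP0}. The only local deviation is your justification of $\widehat\psi(\pm1)\neq0$ via the cofactor/kernel argument for $\mp D+A$, whereas the paper obtains the same fact (inside the proof of Theorem \ref{lem:EvsL}) by induction using \eqref{detf}; both arguments work, and your proof otherwise matches the paper's at the same level of detail.
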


%{\textbf Remark 2.3} {\textit
%If the potentials are zero on all edges then 
%\[
%\tilde{\phi}_{D,i}(\lambda)=
%P_{T_i,\widehat{T}_i}(\cos\sqrt{\lambda}\ell).
%\]
%}

Here, the function
\begin{equation}
\label{3.12}
\phi_D(\lambda)=\prod_{k=1}^{d(v_0)}\phi_{D,k}(\lambda)
=\prod_{k=1}^{d(v_0)}\widehat{\psi}_{k}(c(\sl,\ell))=\prod_{k=1}^{d(v_0)}\det(-c(\sqrt{\lambda},\ell)\widehat{D}_{T_k}+\widehat{A}_{T_k})
\end{equation} 
is the characteristic function $\phi_D(\lambda)=\widehat{\psi}(c(\sl,\ell))$ of the Dirichlet problem  (\ref{3.1})--(\ref{3.4}), (\ref{3.9}) on the initial tree $T$, where 
\begin{equation}
\label{**}
\widehat{\psi}(z):=\det(-z\widehat{D}+\widehat{A})=
\prod_{k=1}^{d(v_0)}\widehat{\psi}_{k}(z)=
\prod_{k=1}^{d(v_0)}\det(-z\widehat{D}_{T_{k}}+\widehat{A}_{T_k}).
\end{equation}
Thus, the Dirichlet problem (unlike the Neumann) leads to the block-diagonalization of the normalized Laplacian.

\begin{remark} We mention the following relation between the characteristic functions of $T$ and the subtrees $T_{k}$, $k=1,\ldots,d(v_0)$, also cf.\ \cite[Proposition 5.4.3]{MP},
\begin{equation}\label{phindk}
\phi_N(\lambda)=\sum_{k=1}^{d(v_0)}\phi_{N,k}(\lambda)\big(\prod_{j=1, j\neq k}^{d(v_0)}\phi_{D,j}(\lambda)\big).
\end{equation}
This formula follows from  \eqref{lawpf} by induction.
\hfill$\Diamond$\end{remark}

Hypothesis  \ref{hyp:two} trivially holds for the identically zero potential and thus, cf.\ Remark \ref{CH}, 
\begin{equation}\label{checknd}
\check{\phi}_N(\lambda)=\sl(\sin(\sl\ell))^{-1}\psi(\cos(\sl\ell)) \text{ and } \check{\phi}_D(
\lambda)=\widehat{\psi}(\cos(\sl\ell))\end{equation} by Theorems \ref{Theorem 3.1} and \ref{Theorem 3.2}. The following result is an asymptotic version of Theorems \ref{Theorem 3.1} and \ref{Theorem 3.2} that holds without assuming Hypothesis  \ref{hyp:two}.

\begin{lemma}\label{lem:phipsi}
Assume Hypothesis \ref{hypo:stass} and impose standard boundary conditions.
Then
\begin{align}\label{nass}
\phi_N(\lambda)&=\sl(\sin(\sl\ell))^{-1}\psi(\cos(\sl\ell))+O(1),\, \lambda>0, \lambda\to+\infty,\\
\phi_D(\lambda)&=\widehat{\psi}(\cos(\sl\ell))+O(1/\sl),\, \lambda>0, \lambda\to+\infty.
\label{dass}
\end{align}
\end{lemma}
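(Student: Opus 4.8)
The plan is to reduce the asymptotic statements \eqref{nass}--\eqref{dass} to the exact formulas \eqref{phind} (equivalently \eqref{3.8} and \eqref{3.11}) that hold under Hypothesis \ref{hyp:two}, by controlling the error incurred when one replaces the general solutions $c_j(\sl,\cdot)$, $s_j(\sl,\cdot)$ by their zero-potential counterparts $\check c_j(\sl,\cdot)=\cos(\sl\,\cdot)$, $\check s_j(\sl,\cdot)=\sl^{-1}\sin(\sl\,\cdot)$. First I would recall the standard transmutation/Volterra-iteration asymptotics for Sturm--Liouville solutions with an $L^2(0,\ell)$ potential: for $\lambda=\zeta^2>0$, $\zeta\to+\infty$,
\begin{equation*}
c_j(\zeta,\ell)=\cos(\zeta\ell)+o(1),\quad s_j(\zeta,\ell)=\zeta^{-1}\sin(\zeta\ell)+o(\zeta^{-1}),
\end{equation*}
\begin{equation*}
c_j'(\zeta,\ell)=-\zeta\sin(\zeta\ell)+o(\zeta),\quad s_j'(\zeta,\ell)=\cos(\zeta\ell)+o(1),
\end{equation*}
uniformly in the edge index $j$ (there are finitely many edges). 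These follow from the integral equations for $c_j,s_j$ and the Riemann--Lebesgue lemma; one may cite \cite{MP0} (Lemma 12.2.4, Theorem 12.2.9) for the precise statements. Strictly speaking the $o(1)$ in these classical expansions can be upgraded: writing $c_j=\check c_j+\tfrac1{2\zeta}\sin(\zeta\ell)\int_0^\ell q_j+\text{(Riemann--Lebesgue term)}$, one has $c_j(\zeta,\ell)-\cos(\zeta\ell)=O(1/\zeta)$ along $\lambda\to+\infty$, and similarly $s_j'(\zeta,\ell)-\cos(\zeta\ell)=O(1/\zeta)$, $s_j(\zeta,\ell)-\zeta^{-1}\sin(\zeta\ell)=O(\zeta^{-2})$, $c_j'(\zeta,\ell)+\zeta\sin(\zeta\ell)=O(1)$; these sharper bounds are what the stated error terms $O(1)$ and $O(1/\sl)$ require.

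Next I would substitute these expansions into the explicit entries of the characteristic matrices $\Phi_N(\lambda)$ and $\Phi_D(\lambda)$ displayed in Remark \ref{rem:split} and Appendix \ref{TT}. The key structural observations are: (i) the matrix entries are polynomials (in fact, linear combinations) of the four quantities $c_j(\ell),s_j(\ell),c_j'(\ell),s_j'(\ell)$ together with the constant entries $\pm 1,0$ coming from the vertex conditions; (ii) for $\Phi_D$ the entry $c_j'(\ell)$ does \emph{not} appear (Remark \ref{phinphid}), so every entry of $\Phi_D(\lambda)$ is $\check{}$-entry $+O(1/\sl)$; hence $\phi_D(\lambda)=\det\Phi_D(\lambda)=\det\check\Phi_D(\lambda)+O(1/\sl)=\check\phi_D(\lambda)+O(1/\sl)=\widehat\psi(\cos(\sl\ell))+O(1/\sl)$, using \eqref{checknd} and the fact that $\det$ is a polynomial in the entries, each of which stays bounded. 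This gives \eqref{dass}. (iii) For $\Phi_N$ the only entries that blow up are the $c_j'(\ell)=-\sl\sin(\sl\ell)+O(1)$ terms; each row of the Kirchhoff block of $\Phi_N$ contains such entries. Expanding $\det\Phi_N(\lambda)$ by multilinearity in the rows, the top-order contribution in powers of $\sl$ is exactly the determinant formed with $c_j'(\ell)$ replaced by $-\sl\sin(\sl\ell)$ and all other entries replaced by their $\check{}$-values, which is precisely $\check\phi_N(\lambda)=\sl(\sin(\sl\ell))^{-1}\psi(\cos(\sl\ell))$; every other term in the multilinear expansion has at least one factor of lower order in $\sl$ and hence contributes $O(1)$ relative to the leading term — more precisely, replacing any single $c_j'(\ell)$ by its $O(1)$ remainder, or any one of the bounded entries by its $O(1/\sl)$ remainder, drops the power of $\sl$ by at least one while introducing at worst a bounded factor, so the total error is $O(1)$. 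This gives \eqref{nass}.

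The main obstacle is bookkeeping in step (iii): one must verify that no cancellation at the leading order occurs and that the "lower-order" terms in the row-multilinear expansion of $\det\Phi_N$ really are uniformly $O(1)$ and not, say, $O(\sl)$ with a coefficient that happens to vanish only in the zero-potential case. The clean way to handle this is to write $\Phi_N(\lambda)=\check\Phi_N(\lambda)+E(\lambda)$ where $E$ has entries that are $O(1)$ in the $d_0$ Kirchhoff-type rows (from the $c_j'$ remainders) and $O(1/\sl)$ in the remaining rows, invoke the multilinear (Leibniz-type) expansion $\det(\check\Phi_N+E)=\sum_{S}\det(\text{rows of }\check\Phi_N\text{ outside }S,\ \text{rows of }E\text{ in }S)$ over subsets $S$ of row indices, bound each $\det$ with a nonempty $S$ by a constant (the $\check\Phi_N$-rows contribute at most one factor of $\sl$ via a single $c_j'$ entry, and as soon as one row is taken from $E$ that factor is either lost or replaced by an $O(1)$ term), and identify the $S=\varnothing$ term with $\check\phi_N(\lambda)=\sl(\sin(\sl\ell))^{-1}\psi(\cos(\sl\ell))$ via \eqref{checknd}. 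I would organize this using the block splitting \eqref{blocks1}: since $\check\Phi_D^{11}$ and $\Phi_N^{11},\Phi_N^{12}$ are constant, only $\Phi^{21},\Phi^{22}$ carry the $\lambda$-dependence, which localizes where the error $E$ lives and makes the counting of powers of $\sl$ transparent.
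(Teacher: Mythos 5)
Your proposal is correct and follows essentially the same route as the paper's proof: the representation-formula asymptotics \eqref{reprform} for $s_j(\sl,\ell)$, $c_j(\sl,\ell)$, $s_j'(\sl,\ell)$, $c_j'(\sl,\ell)$, the observation of Remark \ref{phinphid} that $\phi_D$ does not involve $c_j'(\ell)$ (yielding the $O(1/\sl)$ error) while $\phi_N$ does involve $c_j'(\ell)=-\sl\sin(\sl\ell)+O(1)$ (yielding the $O(1)$ error), and the identification of the leading terms with $\check\phi_N,\check\phi_D$ via \eqref{checknd}. The only caveat is that in your row-wise bookkeeping a single Leibniz monomial may contain several $c_j'$ factors (compensated by the $O(1/\sl)$ factors $s_j(\ell)$ that accompany them), so the power count should track both types of entries rather than assuming at most one factor of $\sl$ per term; this is a refinement of detail, not a different argument, and the paper's own proof treats this step even more tersely.
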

\begin{proof} As it is well-known, see \cite{Mar,MP0}, under Hypothesis \ref{hypo:stass} for all $\lambda>0$ one has
\begin{equation}\label{reprform}\begin{split}
s_j(\sl,\ell)&=\frac{\sin\sqrt{\lambda}\ell}{\sqrt{\lambda}}- \frac{\frac12\int_0^{\ell}q_j(x)dx\cos(\sl\ell)}{\lambda} +\frac{g_1(\sqrt{\lambda})}{\lambda},\\
s_j'(\sl,\ell)&=\cos\sqrt{\lambda}\ell+ \frac{\frac12\int_0^{\ell}q_j(x)dx\sin(\sl\ell)}{\sqrt{\lambda}} +\frac{g_2(\sqrt{\lambda})}{\sqrt{\lambda}},\\
c_j(\sl,\ell)&=\cos\sqrt{\lambda}\ell+ \frac{\frac12 \int_0^{\ell}q_j(x)dx\sin(\sl\ell)}{\sqrt{\lambda}} +\frac{g_3(\sqrt{\lambda})}{\sqrt{\lambda}},\\
c_j'(\sl,\ell)&=-\sqrt{\lambda}\sin\sqrt{\lambda}\ell+ \frac12\int_0^{\ell}q_j(x)dx\cos(\sl\ell) +g_4(\sqrt{\lambda}),
\end{split}\end{equation}
where $g_1,\ldots, g_4$, are some bounded on $\R_+$ functions. Indeed, to verify, say, the first formula, one uses \cite[Corollary 12.2.10]{MP0} with $n=0$: formula (12.2.22) of \cite{MP0} gives
\[s_j(\sl,\ell)=\frac{\sin\sl\ell)}{\sl}+K(\ell,\ell)\frac{\cos(\sl\ell)}{\lambda}+\int_0^\ell\frac{\partial K(\ell,t)}{\partial t}\frac{\cos(\sl t)}{\lambda}\,dt,\]
where $K$ is the function introduced in \cite[Theorem 12.2.9]{MP0}. Applying Cauchy-Swartz inequality in the last integral and using that $\partial K(\ell,t)/\partial t\in L^2(0,\ell)$ by the last statement of \cite[Corollary 12.2.10]{MP0} yields the desired representation of $s_j(\sl,\ell)$; the other formulas in \eqref{reprform}
are dealt with analogously. We now recall that the characteristic function $\phi_N(\lambda)=\det\Phi_N(\lambda)$ polynomially depends on the quantities in the left-hand side of formulas \eqref{reprform} while the characteristic function $\check{\phi}_N(\lambda)$ corresponding to the identically zero potential  polynomially depends on the first terms in the right-hand side of formulas \eqref{reprform}. This yields $\phi_N(\lambda)=\check{\phi}_N(\lambda)+O(1)$ as $\lambda\to+\infty$. Analogously, as noted in Remark \ref{phinphid}, $\phi_D(\lambda)$ polynomially depends on
$s_j(\sl,\ell)$, $s'_j(\sl,\ell)$, $c_j(\sl,\ell)$ (but not on $c'_j(\sl,\ell)$) yielding
$\phi_D(\lambda)=\check{\phi}_D(\lambda)+O(1/\sl)$. Using formula \eqref{checknd} completes the proof. \end{proof}

The next result shows that if the operators $L_D$ and $L_N$ on $T$ share an eigenvalue then they must also share an eigenfunction.
\begin{lemma}\label{rem:intersection}
Let $T$ be a tree rooted at $v_0$. We assume Hypothesis \ref{hypo:stass} and impose standard conditions at all vertices
but the root and consider the operators $L_D$ and $L_N$ in $L^2(T)$ corresponding to the Dirichlet and standard condition at the root. Then $\lambda\in\spec(L_D)\cap \spec(L_N)$ is a common zero of the functions $\phi_D$ and $\phi_N$ if and only if 
\begin{equation}\label{intker}
\ker(L_D-\lambda)\cap\ker(L_N-\lambda)\neq\{0\}.\end{equation}
\end{lemma}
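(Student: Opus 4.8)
The converse implication is immediate: a nonzero $y\in\ker(L_D-\lambda)\cap\ker(L_N-\lambda)$ is an eigenfunction of both operators, so $\phi_D(\lambda)=\phi_N(\lambda)=0$ by Theorems~\ref{Theorem 3.1prime} and~\ref{Theorem 3.2}. For the forward direction I would first record the reformulation: a common eigenfunction is precisely a Dirichlet eigenfunction $y$ of $L_D$ (for the value $\lambda$) that in addition has vanishing Kirchhoff sum at the root $v_0$. Indeed, such a $y$ automatically meets the continuity condition at $v_0$ since all its values there vanish, hence satisfies the full standard condition at $v_0$ and so lies in $\ker(L_N-\lambda)$ as well (cf.\ the block form \eqref{blocks1}). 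Thus the goal is to produce a Dirichlet eigenfunction for $\lambda$ on $T$ with vanishing Kirchhoff sum at $v_0$, and I would argue by induction on the number $p\ge 2$ of vertices of $T$. For $p=2$, $T$ is a single edge with pendant root and Example~\ref{ex:p=2} gives $\phi_D(\lambda)=s_1'(\ell)$, $\phi_N(\lambda)=-c_1'(\ell)$; these never vanish together because $c_1(\ell)s_1'(\ell)-c_1'(\ell)s_1(\ell)$ is the constant Wronskian of $c_1,s_1$, equal to $1$, so the statement is vacuous there.

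For $p\ge 3$ write $T=T_1\cup\dots\cup T_{d(v_0)}$ for the subtrees meeting at the root, so $\phi_D=\prod_k\phi_{D,k}$ by \eqref{3.12}, and put $K=\{k:\phi_{D,k}(\lambda)=0\}\ne\emptyset$. If $|K|\ge 2$, pick distinct $k_1,k_2\in K$; from $\phi_{D,k_i}(\lambda)=0$ there is a nontrivial solution $w^{(i)}$ of the equations on $T_{k_i}$ satisfying the Dirichlet condition at $v_0$ and the standard conditions at the other vertices of $T_{k_i}$. Extending each $w^{(i)}$ by zero on the remaining edges of $T$ yields $\widetilde w^{(i)}\in\ker(L_D-\lambda)$, whose only possibly nonzero term in the Kirchhoff sum at $v_0$ is the derivative $\gamma_i$ of $\widetilde w^{(i)}$ at $v_0$ along the edge of $T_{k_i}$ incident to $v_0$. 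If some $\gamma_i=0$ then $\widetilde w^{(i)}$ already has vanishing Kirchhoff sum at $v_0$ and we are done; otherwise $\gamma_2\widetilde w^{(1)}-\gamma_1\widetilde w^{(2)}$ is a Dirichlet eigenfunction with vanishing Kirchhoff sum at $v_0$, nonzero because its restriction to $T_{k_1}$ is $\gamma_2w^{(1)}\ne 0$. In either case \eqref{intker} holds. (Note $\phi_N(\lambda)=0$ is not used here; it is automatic when $|K|\ge 2$.)

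It remains to treat $K=\{1\}$. By \eqref{phindk} every summand of $\phi_N(\lambda)$ with index $\ne 1$ carries the factor $\phi_{D,1}(\lambda)=0$, so $0=\phi_N(\lambda)=\phi_{N,1}(\lambda)\prod_{j\ge 2}\phi_{D,j}(\lambda)$ and hence $\phi_{N,1}(\lambda)=0$; thus $\lambda$ is an eigenvalue of both Problem D and Problem N on $T_1$. By the $p=2$ computation $T_1$ is not a single edge, so the vertex $v_1$ of $T_1$ adjacent to $v_0$ is interior in $T_1$; let $e_1=[v_0,v_1]$ and let $\widehat T_1$ be the subtree of $T_1$ rooted at $v_1$ obtained by removing $v_0$ and $e_1$, which has $p_{{}_{T_1}}-1<p$ vertices. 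Applying the reduction formula \eqref{lawpf} with $\widehat v=v_1$, $T^{(1)}=e_1$, $T^{(2)}=\widehat T_1$, once to the boundary conditions of Problem D on $T_1$ and once to those of Problem N on $T_1$, I would obtain
\[
\begin{bmatrix}\phi_{D,1}(\lambda)\\\phi_{N,1}(\lambda)\end{bmatrix}=\begin{bmatrix}\varphi_{DD}(e_1)&\varphi_{ND}(e_1)\\\varphi_{DN}(e_1)&\varphi_{NN}(e_1)\end{bmatrix}\begin{bmatrix}\phi_{N,\widehat T_1}(\lambda)\\\phi_{D,\widehat T_1}(\lambda)\end{bmatrix},
\]
where the four matrix entries are the characteristic functions of the segment $e_1$ with Dirichlet ($D$) or standard ($N$) conditions at $v_1$ (first index) and at $v_0$ (second index), and $\phi_{D,\widehat T_1},\phi_{N,\widehat T_1}$ are the characteristic functions of Problems D and N on $\widehat T_1$. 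By Example~\ref{ex:p=2} the determinant of this $2\times 2$ matrix is a nonzero constant — the Wronskian of the fundamental solutions on $e_1$ — so the matrix is invertible, and since its left-hand side vanishes at $\lambda$ we get $\phi_{D,\widehat T_1}(\lambda)=\phi_{N,\widehat T_1}(\lambda)=0$. By the induction hypothesis applied to $\widehat T_1$ there is a nonzero common eigenfunction $w_0$ of the operators associated with $\widehat T_1$; in particular $w_0$ vanishes at $v_1$ and its derivatives at $v_1$ sum to zero. Extending $w_0$ by zero along $e_1$ and along $T_2,\dots,T_{d(v_0)}$ produces a function $\widetilde w_0$ on $T$ that solves the equations on each edge, vanishes on every edge incident to $v_0$, satisfies the standard condition at $v_1$ (continuity because all its values there vanish, Kirchhoff because the incoming derivative along $e_1$ is zero and the outgoing ones sum to zero), and satisfies the standard conditions at all other vertices. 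Thus $\widetilde w_0$ is a Dirichlet eigenfunction of $T$ for $\lambda$ with vanishing Kirchhoff sum at $v_0$, and $\widetilde w_0\ne 0$ because $\widetilde w_0|_{\widehat T_1}=w_0$; this closes the induction.

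The main obstacle is the case $K=\{1\}$: one must carry the boundary-condition bookkeeping through \eqref{lawpf} precisely enough to see that the two applications involve the same pair $\phi_{N,\widehat T_1}(\lambda),\phi_{D,\widehat T_1}(\lambda)$, and then recognize the resulting $2\times 2$ determinant as the nowhere-vanishing Wronskian on the pendant edge $e_1$. The remaining points — that the various zero-extensions lie in $\ker(L_D-\lambda)\cap\ker(L_N-\lambda)$, the base case, and the construction when $|K|\ge 2$ — are routine and checked vertex by vertex.
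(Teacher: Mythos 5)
Your proof is correct, and it relies on the same essential toolkit as the paper's argument: induction over the size of the tree, the reduction formula \eqref{lawpf} together with the invertibility of the resulting $2\times 2$ Wronskian matrix on a single edge, extension of eigenfunctions by zero, and a linear combination of Dirichlet eigenfunctions supported on different subtrees to annihilate the Kirchhoff sum at the root. The organization, however, is genuinely different. The paper inducts on the number of edges (base case $g=2$, Examples \ref{ex:3.1} and \ref{ex:p=2b}), splits according to whether $v_0$ is pendant or interior, and in the interior case works with a fixed Dirichlet eigenfunction $y$, distinguishing the sub-case where some restriction $y|_{T_k}$ is nonzero with vanishing root derivative from the sub-case where at least two root derivatives are nonzero; the remaining configuration (exactly one subtree carrying the eigenfunction, with nonzero derivative at $v_0$) is dispatched only by a terse closing remark. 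You instead induct on the number of vertices (base case $p=2$, vacuous by the Wronskian), organize the cases by the set $K$ of subtrees on which the Dirichlet characteristic function vanishes, using the factorization \eqref{3.12}, and when $K$ is a singleton you invoke \eqref{phindk} to transfer the vanishing of $\phi_N$ to $\phi_{N,1}$ before applying the same \eqref{lawpf}--Wronskian stripping of the edge $e_1$ that the paper applies to the whole tree in its pendant-root case. What your route buys is that the role of the hypothesis $\phi_N(\lambda)=0$ is completely explicit, and the configuration the printed proof treats only in passing is handled on the same footing as the pendant-root case; what the paper's route buys is that it never needs the expansion \eqref{phindk}, only the two lines of \eqref{lawpf}. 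One small point worth stating when writing this up: the identities $\phi_D=\prod_k\phi_{D,k}$ and \eqref{phindk} that drive your case analysis hold under Hypothesis \ref{hypo:stass} alone (they follow from \eqref{lawpf} by induction), so no symmetry assumption on the potentials is smuggled in.
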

\begin{proof}  The ``if'' part is trivial; so, let us {\em assume} that $\lambda$ is such that $\phi_D(\lambda)=\phi_N(\lambda)=0$ and prove \eqref{intker}. We will use induction by the number of edges in $T$. Two cases with $g=2$ are discussed in
 Examples \ref{ex:3.1} and \ref{ex:p=2b}. In the second example our assumption on $\lambda$ never holds while for the first example we have seen that
 if it does hold then we have $\ker(\Phi_D(\lambda))\cap\ker(\Phi_N(\lambda))\neq\{0\}$ for the matrices in \eqref{PHND}.  Since the kernels of the matrices are isomorphic to the kernels of the operators $L_D-\lambda$ and $L_N-\lambda$, we conclude that \eqref{intker} does hold for $g=2$.
 
The induction step has several cases. {\em First}, let us assume that the root $v_0$ is pendant. We will apply the reduction formula \eqref{lawpf} for the graph $T$, the graph $T^{(1)}=e_1$, where $e_1$ is the edge of $T$ incident to $v_0$ and $v_1$, and the graph $T^{(2)}$ consisting of all other edges of $T$, and for the common vertex $\widehat{v}=v_1$ of $T^{(1)}$ and $T^{(2)}$. We impose the standard boundary conditions at all vertices of $T^{(2)}$ and apply \eqref{lawpf} twice: first, we impose the Dirichelt condition at $v_0$, and, second, we impose the Neumann condition at $v_0$. In notation of Remark \ref{rem:LawPiv} we have $\phi_D(T)=\varphi_{D(v_0)N}(T)$ in the first case and $\phi_N(T)=\varphi_{N(v_0)N}(T)$ in the second case. Then our assumption on $\lambda$ and the first equation in \eqref{lawpf} give
\begin{equation}\label{sysphi}
\begin{split}
0&=\varphi_{D(v_0)N}(T)=\varphi_{D(v_0)D(v_1)}(T^{(1)})\phi_N(T^{(2)})+
\varphi_{D(v_0)N(v_1)}(T^{(1)})\phi_D(T^{(2)}),\\
0&=\varphi_{N(v_0)N}(T)=\varphi_{N(v_0)D(v_1)}(T^{(1)})\phi_N(T^{(2)})+
\varphi_{N(v_0)N(v_1)}(T^{(1)})\phi_D(T^{(2)}).
\end{split}
\end{equation}
Formulas \eqref{PHIp2} in Example \ref{ex:p=2} show that
\begin{equation*}\begin{split}
\varphi_{D(v_0)D(v_1)}(T^{(1)})&=s(\ell),\, \varphi_{N(v_0)D(v_1)}(T^{(1)})=c(\ell),\\
\varphi_{D(v_0)N(v_1)}(T^{(1)})&=s'(\ell),\, \varphi_{N(v_0)N(v_1)}(T^{(1)})=c'(\ell).
\end{split}\end{equation*}
Since the Wronskian of the solutions $c(\sl,\cdot)$ and $s(\sl,\cdot)$ is nonzero, the homogenous system \eqref{sysphi} has only a trivial solution for the unknowns $\phi_D(T^{(2)})=0$ and $\phi_N(T^{(2)})=0$. But the number of edges of $T^{(2)}$ is $g-1$ and so, by the induction assumption we then conclude that \eqref{intker} holds for the operators $L_D(T^{(2)})$ and $L_N(T^{(2)})$ on the graph $T^{(2)}$. We now select  the common eigenfunction of the operators $L_D(T^{(2)})$ and $L_N(T^{(2)})$. The eigenfunction is supported on $T^{(2)}$; we extend it by zero to all of $T$. The resulting function will be the common eigenfunction of $L_D$ and $L_N$ in $L^2(T)$ and thus \eqref{intker} is proved for the {\em first} case.

The {\em second} case is when the root $v_0$ of $T$ is not a pendant vertex. Recall that $T_{k}$, $k=1,\ldots,d(v_0)$, are the subtrees of $T$ each rooted at $v_0$ with the edge $e_{k}$ incident to $v_0$. We use the equality $\phi_D(\lambda)=0$ and fix an eigenfunction $y$ of $L_D$ in $L^2(T)$ satisfying the Dirichlet condition $y_{k}(0)=0$ for each $k=1,\ldots, d(v_0)$. Here and in what follows $y_{k}$ denotes the restriction of $y$ to the edge $e_{k}$ while $y\big|_{T_{k}}$ will denote the restriction on $y$ to the entire subtree $T_{k}$. Since $y$ is an eigenfunction, there is at least one value of $k$ such that $y\big|_{T_{k}}$ is not identically zero.

The second case has two sub-cases: (a) Let us consider the sub-case when for at least one subtree $T_{k}$, say, for $T_1$, the restriction of $y\big|_{T_1}$ is not identically zero and satisfies $y_1'(0)=0$. Then $y\big|_{T_1}$ is an eigenfunction of both $L_D(T_1)$ and $L_N(T_1)$ and thus $\lambda$ belongs to the intersection of the spectra of $L_D(T_1)$ and $L_N(T_1)$. By the induction assumption (or by the first case just proved since $v_0$ is a pendant root of $T_1$) we conclude that \eqref{intker} holds for the operators $L_D(T_1)$ and $L_N(T_1)$. As above, we take the common eigenfunction of the two operators and extend it by zero from $T_1$ to $T$ thus proving \eqref{intker} for $L_D(T)$ and $L_N(T)$.

(b) Let us now consider the sub-case when for at least two values of $k$, say, for $k=1$ and $k=2$, we have $y'_1(0)\neq0$ and $y'_2(0)\neq0$. We pick a nonzero vector $(a_1,a_2)\in\bbC^2$ such that $a_1y_1'(0)+a_2y'_2(0)=0$ and define on $T$ the function $Y$ by the rule $Y(x)=a_1y\big|_{T_1}(x)$ for $x\in T_1$, $Y(x)=a_2y\big|_{T_2}(x)$ for $x\in T_2$, and $Y(x)=0$ for $x\notin T_1\cup T_2$.
Clearly, $Y$ is an eigenfunction of $L_D$ because $Y_{k}(0)=0$ for all $k=1,\ldots,d(v_0)$ and $Y$ is an eigenfunction of $L_N$ because $\sum_{k=1}^{d(v_0)}Y'_{k}(0)=0$ by the choice of $a_1$, $a_2$. Thus, \eqref{intker} is proved.
We can not have exactly one nonzero derivative at $v_0$ as Kirchhoff's condition must hold. \end{proof}

\section{Attaching a lead to a compact metric trees}\label{sec:scattering}

In this section we consider a graph $T_\infty$ obtained by attaching an infinite lead,  $e_0=[0,+\infty)$, at the root $v_0$ of a compact equilateral tree $T$ with $g$ edges described in the previous section, see, e.g., Figure \ref{snowfl} (c) and (d). We direct the edge $e_0$ away from $v_0$ and assume that the potential $q_0$ on $e_0$ is identically zero. Thus we have the equations
\begin{equation}
\label{4.1}
-y_j^{\prime\prime}(x)+q_j(x)y_j(x)=\lambda y_j(x),\,  j=1,\ldots,g,\, x\in [0,\ell],
\end{equation}
on all finite edges along with the equation
\begin{equation}
\label{4.2}
-y_0^{\prime\prime}(x)=\lambda y_0(x),\, x\in e_0:= [0,\infty),
\end{equation}
on the edge $e_0$.
We endow the Sturm--Liouville equation~\eqref{4.1}--\eqref{4.2} on $T_\infty$ with the standard conditions at all vertices including $v_0$.

%Some standard notions from the spectral theory of
%operator pencils are collected in Appendix \cite{MP}. 
%We recall that the spectrum of any self-adjoint operator $L$
%coincides with its approximative spectrum, see, e.g.,
%\cite[page 118]{EE}, where the latter is defined as the set of
%$\lambda\in\mathbb{C}$ such that there exists a sequence
%$\left\{f_{n}\right\}_{n=1}^\infty$ in $\dom(L)$
%, called {\textit the approximate sequence} for $\lambda$, with the properties
%such that $\| f_{n}\|\equiv 1$ and $(\lambda I-L)f_{n}\to 0$ as $n\to\infty$. If
%the sequence $\left\{f_{n}\right\}_{n=1}^{\infty}$ is compact,
%then $\lambda$ is either a normal eigenvalue, or an eigenvalue
%that belongs to the essential spectrum (in the latter case, in
%%quantum mechanics, such $\lambda$ is called a \textit{bound state embedded
%into the continuous spectrum}). 

We introduce the usual Hilbert spaces 
\[
L^2(T_\infty):=L^2(0,\infty)\oplus \bigoplus\limits_{j=1}^g L^{2}(0,\ell),
\quad H^2(T_\infty):=H^2(0,\infty)\oplus \bigoplus\limits_{j=1}^g H^{2}(0,\ell)
\]
of square-integrable and Sobolev vector-valued functions $y=(y_j)_{j=0}^g$, and
 the operator $L_\infty$ in $L^2(T_\infty)$ related to the boundary value problem
\eqref{4.1}--\eqref{4.2} equipped with the standard conditions at all vertices including the root. The operator $L_\infty$ acts as follows,
 \begin{equation} 
 \label{4.03}
L_\infty(y_j)_{j=0}^{g}=(-y^{\prime\prime}_{j}+q_{j}y_{j})_{j=0}^g,  \text{ where $q_0(x)=0$ for $x\in e_0$,}
\end{equation}
 and its domain is given by the formula
 \begin{equation} 
 \label{4.3}\begin{split}
\dom(L_\infty):=\big\{(y_j)_{j=0}^g &\in H^2(T_\infty):\, y_j(\ell)-y_k(0)=0,\\ &\qquad
y_j'(\ell)-\sum_{k: e_k\twoheadleftarrow v}y_k'(0)=0, \, e_k\twoheadleftarrow v, e_j\twoheadrightarrow v,
\text{ for all } v\in V \big\};
\end{split}
\end{equation}
here $e_k\twoheadleftarrow v$ means that the edge $e_k$ is outgoing from while
$e_j\twoheadrightarrow v$ means that the edge $e_j$ is incoming to
 a vertex $v$. 
 \begin{remark}\label{rem:domLinfty}
 If $v_0$ is the root of $T$ then the boundary conditions in \eqref{4.3} at $v_0$ are given by $y_0(0)-y_k(0)=0$ for $k=1,\ldots, d(v_0)$ and $\sum_{k=0}^{d(v_0)}y'_k(0)=0$. In particular, if $d(v_0)=1$, that is, there is only one edge,  $e_1$, in $T$ outgoing from $v_0$ and connecting $v_0$ to $v_1$, then $e_1\cup e_0$ could be viewed as a new lead attached to $v_1$.
Under the given boundary condition the derivatives of the functions from $\dom(L_\infty)$ are continuous at $v_0$. Thus, the operator $L_\infty$ is the same as the operator $L'_\infty$ related to the graph $T'_\infty$ obtained by attaching the new lead at $v_1$
  to the graph $T'$ rooted at $v_1$ and obtained from $T$ by deleting $e_1$ and $v_0$. Thus, without loss of generality, we may assume in what follows that $d(v_0)\ge2$.
 \hfill$\Diamond$\end{remark}
We identify the spectrum of the operator $L_\infty$ in \eqref{4.03}--\eqref{4.3} with the spectrum of the boundary problem \eqref{4.1}--\eqref{4.2} equipped with the standard conditions at all vertices. We recall the following result from \cite[Theorem 3.1]{MuP}.

\begin{theorem} \label{thm: 4.1}
Under Hypothesis \ref{hypo:stass} and $q_0=0$ on $e_0$, the operator $L_\infty$ defined in \eqref{4.03}--\eqref{4.3} is self-adjoint and
bounded from below  in $L^2(T_\infty)$.
%, that is, $A\geq -\beta \Id$ for some $\beta\geq 0$. 
Furthermore, $\spec_{\mathrm{ess}}(L_\infty)=[0,\infty)$.
%\end{enumerate}
\end{theorem}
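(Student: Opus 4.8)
The plan is to establish self-adjointness first, then semi-boundedness, and finally identify the essential spectrum, treating the lead as a perturbation-free half-line and the compact part as supplying a relatively compact (or finite-rank in resolvent) correction. First I would verify self-adjointness via the standard theory of quantum graphs: the vertex conditions appearing in \eqref{4.3} are exactly the continuity–Kirchhoff (``standard'') conditions at every vertex, which are well known to be local, self-adjoint coupling conditions (they can be written as $P y + L y' = 0$ for an orthogonal projection $P$ and a self-adjoint $L$ on $\ker P^{\perp}$, or simply recognized as a special case of the Kostrykin–Schrader / Kuchment classification). Since each $q_j\in L^2(0,\ell)$ is a real-valued potential on a finite edge, it is $-d^2/dx^2$-bounded with relative bound zero on $H^2(0,\ell)$; on the lead $q_0\equiv 0$, so no issue there. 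Hence $L_\infty$ is the self-adjoint operator associated with a closed, semi-bounded quadratic form, and self-adjointness follows. This step I expect to be routine, citing \cite{BerkKuch, Kostr-Shrader, Kuraso}.

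Next I would prove semi-boundedness from below. The natural route is through the quadratic form: for $y=(y_j)_{j=0}^g\in\dom(L_\infty)$ one has
\[
\langle L_\infty y,y\rangle=\int_0^\infty |y_0'|^2\,dx+\sum_{j=1}^g\int_0^\ell\bigl(|y_j'|^2+q_j|y_j|^2\bigr)\,dx,
\]
the boundary terms from integration by parts cancelling by the Kirchhoff conditions and the continuity of $y$ across vertices. On each finite edge, $\bigl|\int_0^\ell q_j|y_j|^2\bigr|\le \varepsilon\|y_j'\|^2+C_\varepsilon\|y_j\|^2$ by the standard Sobolev interpolation inequality (relative bound zero), so the form is bounded below by $-C\|y\|^2$ for a constant $C$ depending only on the $q_j$ and $\ell$. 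Together with closability (again standard for this setting) this gives that $L_\infty$ is bounded below.

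Finally, for the essential spectrum I would argue that $\spec_{\mathrm{ess}}(L_\infty)=\spec_{\mathrm{ess}}(L_\infty^0)=[0,\infty)$, where $L_\infty^0$ is the operator with all $q_j$ replaced by zero. The identity $\spec_{\mathrm{ess}}(L_\infty)=\spec_{\mathrm{ess}}(L_\infty^0)$ follows because the resolvent difference $(L_\infty-z)^{-1}-(L_\infty^0-z)^{-1}$ is compact: the two operators differ only by the bounded multiplication operator by $(q_j)_{j=1}^g$, supported on the compact part, which is $L_\infty^0$-compact (on a finite interval $H^2\hookrightarrow L^2$ compactly), and Weyl's theorem applies. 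For $L_\infty^0$ one computes directly: decompose $L^2(T_\infty)=L^2(0,\infty)\oplus L^2(T)$; the compact part contributes only discrete spectrum, and on the half-line the Laplacian with the vertex coupling at $v_0$ has essential spectrum $[0,\infty)$ by explicit use of Weyl sequences (truncated plane waves $e^{i\sqrt{\lambda}x}$ pushed to infinity along $e_0$, multiplied by a cutoff) together with the fact that $L_\infty^0$ is nonnegative, so nothing negative can be essential. Alternatively one may cite that $L_\infty^0$ is, after the glueing, a finite-rank perturbation in resolvent sense of the decoupled operator $(-d^2/dx^2 \text{ on } (0,\infty))\oplus L_N^0(T)$, whose essential spectrum is patently $[0,\infty)$. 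The main obstacle is being careful that the vertex conditions at $v_0$ really do not create additional essential spectrum or eigenvalues accumulating at the edge of $[0,\infty)$ — this is handled by the nonnegativity of $L_\infty^0$ and the relative compactness argument, so there is no genuine difficulty, only bookkeeping; I would keep this brief and lean on \cite{MuP} and the cited graph literature.
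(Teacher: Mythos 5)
The paper itself does not prove this statement at all: it is recalled verbatim from \cite[Theorem 3.1]{MuP}, so there is no internal proof to compare against, and your task was effectively to reconstruct the argument of that reference. Your outline is the standard route and is essentially sound: standard (continuity--Kirchhoff) vertex conditions give a self-adjoint coupling in the Kostrykin--Schrader/Kuchment framework; the form bound $\bigl|\int_0^\ell q_j|y_j|^2\,dx\bigr|\le\varepsilon\|y_j'\|_{L^2}^2+C_\varepsilon\|y_j\|_{L^2}^2$ (valid since $q_j\in L^2(0,\ell)\subset L^1(0,\ell)$ and $\|y\|_\infty^2\le\varepsilon\|y'\|^2+C_\varepsilon\|y\|^2$) yields semi-boundedness; and the essential spectrum is obtained by comparing with the free operator and then decoupling the lead from the compact part, with Weyl sequences $e^{\i\sqrt{\lambda}x}$ times cutoffs pushed along $e_0$ giving $[0,\infty)\subseteq\spec_{\mathrm{ess}}(L_\infty)$. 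One inaccuracy to fix: multiplication by $(q_j)_{j=1}^g$ with $q_j\in L^2(0,\ell)$ is \emph{not} a bounded operator on $L^2(T_\infty)$, so you cannot describe the perturbation as bounded; what your argument actually uses, and what is true, is that $q(L_\infty^0-z)^{-1}$ is compact (the resolvent maps into $H^2$ on each finite edge, $H^2(0,\ell)\hookrightarrow C[0,\ell]$ compactly, and multiplication by an $L^2$ function maps $C[0,\ell]$ into $L^2(0,\ell)$), which gives compactness of the resolvent difference and lets Weyl's theorem apply. With that correction, and the routine observation that the change of vertex condition at $v_0$ used for the decoupling is a finite-rank resolvent perturbation, your proof goes through; it is also consistent in spirit with the machinery the paper does develop later (e.g.\ the pencil/homotopy arguments of Lemma \ref{lem:LinfLN} rely on the same self-adjointness and semi-boundedness facts imported from \cite{MuP}).
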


Our next objective is to study the solution $y=y(\lambda, \cdot)=(y_j(\cdot))_{j=0}^g$ to  the Sturm--Liouville equations \eqref{4.1}--\eqref{4.2} on $T_\infty$ that satisfies the standard conditions at all vertices (including $v_0$) and such that the restriction of the solution to the lead $e_0$ attached at the root $v_0$ of $T$ is given by the formula
\begin{equation}
\label{*}
y_0(\lambda,x)=
\phi_D(\lambda)\check{c}(\sqrt{\lambda},x)
-\phi_N(\lambda)\check{s}(\sqrt{\lambda},x), \, \sl\in\bbC,\, x\in e_0.
\end{equation}
 Here we use the ``check''-notation introduced in Remark \ref{CH}:  $\check{s}$, $\check{c}$ are the solutions to the equation $-y''=\lambda y$ on $e_0$ satisfying $\check{s}(\sqrt{\lambda},0)=0$, $\check{s}'(\sqrt{\lambda},0)=1$, $\check{c}(\sl,0)=1$, 
 $\check{c}'(\sl,0)=0$, 
 that is, $\check{s}(\sqrt{\lambda},x)=\frac1{\sqrt{\lambda}} \sin(\sqrt{\lambda} x)$ when $\lambda\neq0$ and $\check{s}({0},x)=x$ while $\check{c}(\sqrt{\lambda},x)=\cos(\sqrt{\lambda}x)$ when $\lambda\neq0$ and $\check{c}({0},x)=1$.

%\YL{Slava, I am not sure why we need assumption $q_0=0$ throughout? Can we use regular solutions $c(\sqrt{\lambda},x)$ and $s(\sqrt{\lambda},x)$ for $x\in e_0$ as well? We can define $E$, $S$ as Marchenko does...}

\begin{lemma}\label{phiphi} 
Assume Hypothesis \ref{hypo:stass} and $q_0=0$ on $e_0$ and fix any $\lambda\in\bbC$. Then:
\begin{itemize}
\item[(i)]  There exists a solution $y=y(\lambda,\cdot)=(y_j(\cdot))_{j=0}^g$ to \eqref{4.1}--\eqref{4.2} on $T_\infty$ 
that satisfies the standard conditions at all vertices including $v_0$ and has the form \eqref{*} on the lead $e_0$.
\item[(ii)]
If the intersection $\ker(L_D-\lambda I_{L^2(T)})\cap\ker(L_N-\lambda I_{L^2(T)})$  is trivial then the  solution  in (i)  is unique up to a scalar multiple.
 \item[(iii)] If the intersection in (ii) is not trivial then the number of linearly independent solutions  in (i) is equal to the dimension of the intersection.
 \item[(iv)] For any nonzero solution $y=(y_j)_{j=0}^g$ to   \eqref{4.1}--\eqref{4.2} satisfying the standard conditions at all vertices equation \eqref{*} necessarily holds.\end{itemize}
\end{lemma}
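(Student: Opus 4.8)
The plan is to build the solution $y$ on $T_\infty$ by first solving a suitable boundary value problem on the compact tree $T$ with prescribed data at the root $v_0$, and then gluing it to a solution on the lead $e_0$. The key observation is that the function $y_0$ in \eqref{*} satisfies $y_0(\lambda,0)=\phi_D(\lambda)$ and $y_0'(\lambda,0)=-\phi_N(\lambda)$, since $\check c(\sl,0)=1$, $\check c'(\sl,0)=0$, $\check s(\sl,0)=0$, $\check s'(\sl,0)=1$. Therefore, the standard (continuity plus Kirchhoff) conditions at $v_0$ for the glued solution read: $y_k(0)=\phi_D(\lambda)$ for all $k=1,\ldots,d(v_0)$, together with $\sum_{k=1}^{d(v_0)}y_k'(0)=-y_0'(0)=\phi_N(\lambda)$. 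So part (i) reduces to the following claim on the compact tree: \emph{there is a solution $(y_j)_{j=1}^g$ of \eqref{4.1} satisfying the standard conditions at all vertices $v\neq v_0$, with common value $\phi_D(\lambda)$ at $v_0$ and with sum of outgoing derivatives equal to $\phi_N(\lambda)$ at $v_0$.}

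The natural way to produce this is to use the characteristic-matrix machinery of Section \ref{sec:3} together with the block structure in Remark \ref{rem:split}. First I would handle the case $\phi_D(\lambda)\neq 0$: then rescale so that the common value at $v_0$ is $1$; writing $y_j = a_j c_j + b_j s_j$, imposing $y_k(0)=1$ forces $a_k=1$ for $k=1,\ldots,d(v_0)$, and the standard conditions at the interior and pendant vertices $v\neq v_0$ give exactly the $2g-d(v_0)$ equations encoded by the blocks $\Phi^{21},\Phi^{22}$ of Remark \ref{rem:split}; since $\det\Phi^{22}=\phi_D(\lambda)\neq 0$ by \eqref{blocks2}, these determine the remaining coefficients $(a_{d_0+1},\ldots,b_g)$ uniquely, and one then checks by a determinant expansion (essentially the cofactor expansion that computes $\phi_N$ from the $\Phi_N$-matrix) that the resulting sum $\sum_{k=1}^{d(v_0)} y_k'(0)$ equals $\phi_N(\lambda)/\phi_D(\lambda)$; scaling back by $\phi_D(\lambda)$ gives \eqref{*}. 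For the degenerate case $\phi_D(\lambda)=0$ (and hence possibly $\phi_N(\lambda)=0$ too), I would argue by continuity: pick $\lambda_n\to\lambda$ with $\phi_D(\lambda_n)\neq 0$, take the normalized solutions $y^{(n)}$ just constructed, and pass to a limit --- the coefficients stay bounded because $c_j,s_j,c_j',s_j'$ and $\phi_D,\phi_N$ are entire in $\sl$ --- obtaining a solution whose lead part is the (possibly zero, but that's allowed) function \eqref{*} at $\lambda$. Alternatively, and perhaps more cleanly, for $\phi_D(\lambda)=0$ one can directly exhibit the solution using Lemma \ref{rem:intersection}: if $\phi_N(\lambda)=0$ as well then a common eigenfunction of $L_D$ and $L_N$ extended by $0$ on $e_0$ works, and if $\phi_D(\lambda)=0\neq\phi_N(\lambda)$ then an eigenfunction of $L_D$ suitably scaled, extended by $-\phi_N(\lambda)\check s$ on the lead, satisfies the Kirchhoff condition at $v_0$.

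For (ii), (iii), (iv) I would argue as follows. Part (iv): let $y=(y_j)_{j=0}^g$ be any nonzero solution satisfying the standard conditions at all vertices; on $e_0$ we have $y_0=\alpha\check c+\beta\check s$ for scalars $\alpha=y_0(0)$, $\beta=y_0'(0)$. The continuity at $v_0$ gives $y_k(0)=\alpha$ for all $k$, and the compact part $(y_j)_{j=1}^g$ then satisfies the $\Phi_D$-type system with the $d(v_0)$ root-rows replaced by $a_k=\alpha$; Kirchhoff at $v_0$ gives $\sum_{k=1}^{d(v_0)} y_k'(0)=-\beta$. Now one checks the identity $\alpha\phi_N(\lambda)+\beta\phi_D(\lambda)=0$: this is exactly the solvability/consistency relation for that linear system, obtained by the same cofactor expansion as in (i) (when $\phi_D(\lambda)\neq 0$, $\alpha$ determines the compact part uniquely and forces $\beta=-\alpha\phi_N/\phi_D$; the degenerate case again follows by continuity or by the structure of the kernel). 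Hence $(\alpha,\beta)$ is proportional to $(\phi_D(\lambda),-\phi_N(\lambda))$, which is \eqref{*} up to the scalar $\alpha/\phi_D(\lambda)$ (and if both $\phi_D,\phi_N$ vanish then $\alpha=\beta=0$, $y$ is supported on $T$, and \eqref{*} holds with both sides zero). Part (ii): if the intersection is trivial, then given two solutions as in (i), form the difference after matching the lead parts; its lead part vanishes identically, so $y_0\equiv 0$ on $e_0$ forces $y_k(0)=0$ and $\sum y_k'(0)=0$ at $v_0$, i.e.\ the difference lies in $\ker(L_D-\lambda)\cap\ker(L_N-\lambda)=\{0\}$, so the difference is zero. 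Part (iii): the map sending a solution $y$ as in (i) to its restriction $y\big|_T\in L^2(T)$ has, by the same argument, kernel exactly $\ker(L_D-\lambda)\cap\ker(L_N-\lambda)$, and is surjective onto a complement by the construction in (i); more precisely, two solutions with the same lead part differ by an element of that intersection, and conversely adding such an element to a solution in (i) gives another, so the affine space of solutions in (i) is a coset of the intersection, and after also accounting for scaling of the (nonzero, when $\phi_D\neq0$) lead part one gets that the dimension of the solution space equals $1+\dim\big(\ker(L_D-\lambda)\cap\ker(L_N-\lambda)\big)$ when $(\phi_D,\phi_N)\neq(0,0)$ and equals $\dim\big(\ker(L_D-\lambda)\cap\ker(L_N-\lambda)\big)$ when $\phi_D=\phi_N=0$; this matches the statement since in the latter case the lead part is forced to be zero.

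The main obstacle I anticipate is the bookkeeping in the determinant expansion that proves the consistency identity $\alpha\phi_N(\lambda)+\beta\phi_D(\lambda)=0$ and, equivalently, that the constructed compact-tree solution has outgoing derivative sum exactly $\phi_N(\lambda)/\phi_D(\lambda)$ at $v_0$. One must carefully relate the $\Phi_N$-matrix --- whose first $d(v_0)$ rows encode continuity ($a_1-a_k=0$) plus Kirchhoff ($\sum b_k=0$) at $v_0$ --- to the $\Phi_D$-matrix, which has $\Phi_D^{11}=I$, and track the cofactor signs through the block decomposition of Remark \ref{rem:split}; the relation \eqref{phindk} (or \eqref{lawpf}) is the clean statement of this and can be invoked to shortcut the computation. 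The degenerate case $\phi_D(\lambda)=0$ also requires a little care, but the continuity argument using analyticity in $\sl$ of all the entering functions handles it uniformly.
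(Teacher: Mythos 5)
Your overall strategy tracks the paper's quite closely: encode the gluing data at $v_0$ (common value $\phi_D(\lambda)$, derivative sum $\phi_N(\lambda)$) as an inhomogeneous linear system for the coefficients in \eqref{yj}, exploit the block structure of Remark \ref{rem:split} with $\det\Phi^{22}=\phi_D(\lambda)$ from \eqref{blocks2}, and treat the doubly degenerate case $\phi_D(\lambda)=\phi_N(\lambda)=0$ via common eigenfunctions supplied by Lemma \ref{rem:intersection}; your difference argument for (ii) and the coset count for (iii) are correct and are an acceptable variant of the paper's uniqueness-of-$\bu$ argument. The genuine methodological difference is that the paper works with the bordered $(2g+1)\times 2g$ system \eqref{psipsi} and proves $\rank\widehat{\Phi}=\rank\Phi=2g$ by the explicit evaluation $\det\widetilde{\Phi}_N=(-1)^{d_0-1}\phi_D(\lambda)$ in \eqref{det3}, which gives existence and uniqueness uniformly for every $\lambda$ with $(\phi_D(\lambda),\phi_N(\lambda))\neq(0,0)$; you instead invert $\Phi^{22}$ when $\phi_D(\lambda)\neq 0$ and then must verify the Weyl-type identity $\sum_k y_k'(0)=\phi_N(\lambda)/\phi_D(\lambda)$, for which invoking \eqref{phindk}/\eqref{lawpf} is indeed a legitimate shortcut.

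The genuine gap is in part (iv) at the degenerate values of $\lambda$. You derive the consistency relation $\alpha\phi_N(\lambda)+\beta\phi_D(\lambda)=0$ only when $\phi_D(\lambda)\neq0$, and for $\phi_D(\lambda)=0\neq\phi_N(\lambda)$ you appeal to ``continuity or the structure of the kernel.'' Continuity cannot be applied here: the solution $y$ is given at one fixed $\lambda$, and there is no family of solutions at nearby $\lambda$ from which to take a limit; ``structure of the kernel'' is not an argument. The paper sidesteps this by proving the relation for all $\lambda$ at once: the cofactor expansion \eqref{det1} of $\det\widehat{\Phi}$ (with $\phi_D,\phi_N$ replaced by $a_0,-b_0$) together with \eqref{det3} and the Kronecker--Capelli criterion yields $a_0\phi_N(\lambda)+b_0\phi_D(\lambda)=0$ whenever the system is solvable, with no nondegeneracy assumption. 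Likewise, your claim that $\phi_D(\lambda)=\phi_N(\lambda)=0$ forces $\alpha=\beta=0$ is asserted with no argument at all --- the consistency relation is vacuous there, so a separate argument is required, and the paper devotes one to exactly this case using the Dirichlet and Kirchhoff conditions at $v_0$. Two smaller repairable points in (i): the boundedness claim ``the coefficients stay bounded because the entries are entire'' is not a justification when $\det\Phi^{22}=\phi_D(\lambda_n)\to0$ (you would need to rewrite the scaled solution via the adjugate, $\phi_D(\Phi^{22})^{-1}=\operatorname{adj}\Phi^{22}$, which is entire); and your alternative construction via a Dirichlet eigenfunction needs the one-line remark that when $\phi_N(\lambda)\neq0$ some Dirichlet eigenfunction has nonzero derivative sum at $v_0$, since otherwise every such eigenfunction would also be an eigenfunction of $L_N$, contradicting $\phi_N(\lambda)\neq0$.
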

\begin{proof} For definiteness, we give the proof for the case when $v_0$ is an interior vertex; the case when it is pendant is analogous, cf.\ also Remark \ref{rem:domLinfty}.
By Lemma \ref{rem:intersection} the intersection in (ii) is nontrivial if and only if $\lambda\in\spec(L_D)\cap\spec(L_N)$, equivalently, $\phi_D(\lambda)=\phi_N(\lambda)=0$, and if this is the case then the operators $L_D$ and $L_N$ share $\dim\big(\ker(L_D-\lambda I_{L^2(T)})\cap\ker(L_N-\lambda I_{L^2(T)})\big)$ common linearly independent eigenfunctions. Each of the eigenfunctions gives a solution to 
\eqref{4.1}--\eqref{4.2} on $T_\infty$ that satisfies the standard conditions at all vertices (including $v_0$) with
the function $y_0$ being identically zero on $e_0$ so that  \eqref{*} holds. This proves parts (iii) of the lemma and part (i) in the case when $\phi_D(\lambda)=\phi_N(\lambda)=0$.

We are left with the main case of part (i) and part (ii) when at least one of the values $\phi_D(\lambda)$ and $\phi_N(\lambda)$ is nonzero. If this is the case, 
we temporarily introduce the following column- and row-vectors,
\[ \be_{d_0}:=(\delta_{d_0j})_{j=1}^{2g}= [0,\ldots, 0,1,0,\ldots,0]^\top\in\bbC^{2g\times1} 
\text{ and } \be'_1:=(\delta_{1j})_{j=1}^{2g} =[1,0\ldots,0]\in\bbC^{1\times 2g},\]
where $\delta_{ij}$ is the Kronecker's delta.
In the case that we are currently considering the column-vector 
$[\phi_D(\lambda)\,\,\, \phi_N(\lambda)\be_{d_0}^\top]^\top\in\bbC^{(2g+1)\times 1}$ is nonzero because the first entry of the vector is $\phi_D(\lambda)$ and the $(d_0+1)$-th entry is $\phi_N(\lambda)$.
Since $y_0(0)=\phi_D(\lambda)$ and $y_0'(0)=-\phi_N(\lambda)$ due to \eqref{*}, the standard boundary condition for the solution $y$ to \eqref{4.1}--\eqref{4.2} on $T_\infty$ at $v_0$ becomes inhomogeneous and reads $y_1(0)=\phi_D(\lambda)$, $y_1(0)-y_2(0)=0$, \dots, $y_1(0)-y_{d_0}(0)=0$ and $y_1'(0)+\ldots+y_{d_0}'(0)=\phi_N(\lambda)$, while the boundary conditions at all other vertices are standard and remain homogenous. We will use \eqref{yj} at each of the edges of $T$ and consider a {\em non-homogeneous} system of $2g+1$ linear equations for the unknowns listed in \eqref{ab}.
The boundary condition at $v_0$ gives $d_0+1$ equations $a_1=\phi_D(\lambda)$, $a_1-a_2=0$, \dots, $a_1-a_{d_0}=0$ and $b_1+\ldots+b_{d_0}=\phi_N(\lambda)$ while the boundary conditions at all other vertices give the remaining  $2g-d_0$ equations which are the same as in the construction of the characteristic matrix $\Phi_N(\lambda)$. As a result, we obtain a non-homogenous system of $2g+1$ equations
with $2g$ unknowns $\bu:=(a_1,\ldots,a_{11},\ldots,a_{111},\ldots, b_1,\ldots, b_{11},\ldots)^\top$, cf.\ \eqref{ab}. The non-homogeneous system itself, the $(2g+1)\times(2g)$-matrix $\Phi$ of the system and the extended $(2g+1)\times(2g+1)$-matrix $\widehat{\Phi}$ of the system are as follows,
\begin{equation}\label{psipsi}
\Phi \bu=\begin{bmatrix}\phi_D(\lambda)\\\phi_N(\lambda)\be_{d_0}\end{bmatrix},\, \Phi:=\begin{bmatrix}\be'_1\\\Phi_N(\lambda)\end{bmatrix},\,
\widehat{\Phi}:=\begin{bmatrix}\be'_1 & \phi_D(\lambda)\\
\Phi_N(\lambda)& \phi_N(\lambda)\be_{d_0}\end{bmatrix},
\end{equation}
We claim that \begin{equation}\label{rank}
\rank(\widehat{\Phi})=\rank(\Phi)=2g.\end{equation} As soon as the claim is proved, the Kronecker-Capelli Theorem gives the existence of a unique solution $\bu$ to the system of equations in \eqref{psipsi} and thus the existence of a unique solution $y$ to \eqref{4.1}--\eqref{4.2} on $T_\infty$ that satisfies the standard conditions at all vertices (including $v_0$) and equation \eqref{*}, thus finishing the proof of parts (i) and (ii) of the lemma.

To begin the proof of \eqref{rank}, we expand the determinant of $\widehat{\Phi}$ using the last column to obtain
\begin{equation}\label{det1}
\det\widehat\Phi=\phi_D(\lambda)\det\Phi_N(\lambda)+(-1)^{2g+1+d_0+1}(\phi_N(\lambda))\det\widetilde{\Phi}_N,
\end{equation}
where the $(2g\times 2g)$-matrix $\widetilde{\Phi}_N$ is obtained from the matrix ${\Phi}_N(\lambda)$ by crossing out its $d_0$-th row and adding $\be'_1$ as the first row. In particular, $\widetilde{\Phi}_N$ is a sub-matrix of $\Phi$. We now split $\widetilde{\Phi}_N=(\widetilde{\Phi}_N^{ij})_{i,j=1}^2$ as indicated in Remark \ref{rem:split}, cf.\ \eqref{blocks1}, to obtain 
\begin{equation*}
\widetilde{\Phi}_N^{11}=\begin{bmatrix}1&0&\dots&0\\
1&-1&\dots&0\\
\dots&\dots&\dots&\dots\\
1&0&\dots&-1\end{bmatrix}, \,
\widetilde{\Phi}_N^{12}=0, \widetilde{\Phi}_N^{21}=\Phi_D^{21}=\Phi_N^{21},
\, \widetilde{\Phi}_N^{22}=\Phi_D^{22}=\Phi_N^{22}.
\end{equation*}
 This and \eqref{blocks2} yield
 \begin{equation}\label{det3}
 \det\widetilde{\Phi}_N=\det\widetilde{\Phi}_N^{11}\cdot\det\widetilde{\Phi}_N^{22}
 =(-1)^{d_0-1}\det\Phi_D^{22}=(-1)^{d_0-1}\phi_D(\lambda).
 \end{equation}
 In turn, using this in \eqref{det1} gives $\det\widehat{\Phi}=0$ and so $\rank\widehat{\Phi}=\rank\Phi$. To show the second equality in \eqref{rank}, we remark that $\Phi_N(\lambda)$ is a $(2g\times 2g)$-sub-matrix of $\Phi$, and so if $\phi_N(\lambda)=\det\Phi_N\neq0$ then the second equality in \eqref{rank} does hold. In the case when $\phi_N(\lambda)=0$ but $\phi_D(\lambda)=\det\Phi_D\neq0$ we use the fact that $\Phi$ also has a $(2g\times 2g)$-sub-matrix $\widetilde\Phi_N$ which  is nonsingular by \eqref{det3} thus finishing the proof of \eqref{rank}.
 
 It remains to prove part (iv) of the lemma. We want to show that if $y=y(\lambda,\cdot)$ is a nontrivial solution to the equations \eqref{4.1}--\eqref{4.2} on $T_\infty$ that satisfies the standard boundary conditions at all vertices including the root, then the restriction $y_0$ of $y$ to $e_0$ is necessarily of the form \eqref{*}, that is, if  
\begin{equation}\label{yj0}
y_0(\lambda,x)=a_0\check{c}(\sqrt{\lambda},x)
+b_0\check{s}(\sqrt{\lambda},x)\end{equation} then $a_0=\phi_D(\lambda)$ and $b_0=-\phi_N(\lambda)$. First, we prove this in the case when $\lambda$ is such that at least one of the values $\phi_D(\lambda)$, $\phi_N(\lambda)$ is nonzero. Indeed, for $y_0$ from \eqref{yj0} we have $y_0(0)=a_0$ and $y'_0(0)=b_0$. Representing $y$ as in \eqref{yj} on $T$ and as in \eqref{yj0} on $e_0$ as above, we obtain for the $2g$ coefficients $\bu$ in \eqref{yj} recorded in \eqref{ab} an inhomogeneous system of $2g+1$ equations as in \eqref{psipsi} but modified such that $\phi_D(\lambda)$ is replaced by $a_0$ and $\phi_N(\lambda)$  is replaced by $-b_0$. The modified system in \eqref{psipsi} has a solution $\bu$. Then the
Kronecker-Capelli Theorem tells us that $\rank\widehat\Phi=\rank\Phi$ and therefore $\det\widehat\Phi=0$. Expanding the determinant of $\widehat{\Phi}$ as in \eqref{det1} gives $0=\det\widehat\Phi=a_0\det\Phi_N+(-1)^{d_0}(-b_0)\det\widetilde{\Phi}_N$. Using \eqref{det3}, this yields
\begin{equation}\label{abphi}
a_0\phi_N(\lambda)+b_0\phi_D(\lambda)=0.
\end{equation}
First, we consider the case when $\phi_N(\lambda)\neq0$.  If $b_0=0$ then $a_0=0$ and the modified system of equations in \eqref{psipsi} becomes $\Phi \bu=0$ which yields $\Phi_N\bu=0$ and so we must have $\bu=0$, which is not possible as the solution $y$ is nontrivial to begin with. This proves that if $\phi_N(\lambda)\neq0$ then $b_0\neq0$. We recall that equations \eqref{4.1}--\eqref{4.2} are homogeneous and pass from $y$ to the solution $-\frac{\phi_N(\lambda)}{b_0}y$ which is equal on $e_0$ to $-a_0\frac{\phi_N(\lambda)}{b_0}\check{c}(\sqrt{\lambda},x)
-\phi_N(\lambda)\check{s}(\sqrt{\lambda},x)$. Thus, without loss of generality we may assume that $b_0=-\phi_N(\lambda)$ in \eqref{yj0} from the start. But then \eqref{abphi} yields $a_0=\phi_D(\lambda)$ as required. The case $\phi_D(\lambda)\neq0$ is analogous to the case $\phi_N(\lambda)\neq0$. It remains to consider the case 
 when $\phi_D(\lambda)=\phi_N(\lambda)=0$ and prove that then $a_0=b_0=0$ in \eqref{yj0}. Since $\phi_D(\lambda)=0$, we know that $y$ satisfies the Dirichlet conditions $y_{k}(0)=0$, $k=1,\ldots,d(v_0)$, at $v_0$ viewed as the root of the graph $T$. Since $a_0=y_0(\lambda,0)$ and $b_0=y'_0(\lambda,0)$, and $y$ satisfies the standard (hence continuity) condition at $v_0$ viewed as a vertex of the graph $T_\infty$, this yields $a_0=0$. A similar argument using the Kirchhoff's condition  yields $b_0=0$.
\end{proof}

\begin{remark}\label{yT} The restriction $y_T$ to $T$ of the solution $y(\lambda,\cdot)$ to \eqref{4.1}--\eqref{4.2} on $T_\infty$ that satisfies the standard conditions at all vertices (including $v_0$) and equation \eqref{*} just constructed in Lemma \ref{phiphi} has the following remarkable properties: The function $y_T$ satisfies the Dirichlet condition $y_T(v_0)=0$ at $v_0$ if and only if $\lambda\in\spec(L_D)$ and the Kirchhoff's condition $\sum_{k=1}^{d(v_0)}y_{k}'(0)=0$ at $v_0$ if and only if $\lambda\in\spec(L_N)$. This follows from the fact that  $\lambda\in\spec(L_D)$ if and only if $\phi_D(\lambda)=0$ while  $\lambda\in\spec(L_N)$ if and only if $\phi_N(\lambda)=0$; for the solution to \eqref{4.1}--\eqref{4.2} satisfying \eqref{*} one has $y_0(0)=\phi_D(\lambda)$ and $y_0'(0)=-\phi_N(\lambda)$.
\hfill$\Diamond$\end{remark}

%\begin{example} We illustrate 
%Lemma \ref{phiphi} and Remark \ref{yT} for the case considered in Example \ref{ex:3.1}
%(recall that, to simplify notation, we assume the potentials to be equal on all edges).
%Indeed, we want to find a solution $y_T$ to the Sturm-Liouville problem that satisfies at the root $v_0$ the {\em inhomogeneous} conditions 
%$y_1(0)=y_2(0)=\phi_D(\lambda)$ and $y_1'(0)+y_2'(0)=-\phi_N(\lambda)$ and the standard (Neumann) conditions at all other vertices. 
%The solution $y_T$ has the remarkable property that $\lambda\in\spec(L_D)$ if and only if $y_T$ satisfies the Dirichlet condition $y_T(v_0)=0$ while $\lambda\in\spec(L_N)$ if and only if $y_T$ satisfies the standard condition $y_1'(0)+y_2'(0)=0$ at $v_0$. 
 %To find $y_T$, we will have to solve a inhomogeneous system of $2g+1$ linear equations with $2g$ unknowns obtained by inserting functions from \eqref{yj} into the boundary condition; the $5\times 4$ matrix of the system and the inhomogeneous term are as follows,
%\[\Phi= \begin{bmatrix}1&0&0&0\\1&-1&0&0\\0&0&1&1\\
%c'(\ell)&0&s'(\ell)&0\\
%0&c'(\ell)&0&s'(\ell)\end{bmatrix} \qquad\text{ and }\qquad
%\begin{bmatrix}\phi_D(\lambda)\\0\\-\phi_N(\lambda)\\0\\0\end{bmatrix}.\hskip3cm\Diamond\]
%\end{example}

%\begin{remark}
%\YL{Slava, do we need to record that if $y_0=a_0\check{c}(\sqrt{\lambda},x)
%+b_0\check{s}(\sqrt{\lambda},x)$ then $a_0=\phi_D(\lambda)$ and $b_0=\phi_N(\lambda)$????? I can show this unless both $\phi_D$ and $\phi_N$ are zeros.}

%\hfill$\Diamond$\end{remark}

We will now utilize  the formalism of the classical scattering theory (see, e.g., \cite[Chapter 3]{Mar} or \cite[Chapter 5]{NZ} or \cite{RS3}). We follow the convention in \cite{Mar} and notice that if $\im(\sqrt{\lambda})>0$ then the function $\e^{\i\sqrt{\lambda}x}$ decays to zero while $\e^{-\i\sqrt{\lambda}x}$ growths to infinity as $x\to+\infty$.

Equation (\ref{*}) can be rewritten as follows,
\begin{equation}
\label{4.4}\begin{split}
y_0(\lambda,x)&=\frac{1}{2i\sqrt{\lambda}}
\big(e^{-i\sqrt{\lambda}x}(\phi_N(\lambda)+i\sqrt{\lambda}\phi_D(\lambda))\\&\qquad 
-e^{i\sqrt{\lambda}x}(\phi_N(\lambda)-i\sqrt{\lambda}\phi_D(\lambda))\big),\, \sl\in\bbC.\end{split}\end{equation}

%\YL{Slava, do we ever need the case $\lambda=0$? Marchenko has Lemma 3.1.6 that says that $\frac{\sqrt{\lambda}}{E(\sqrt{\lambda})}$ is bounded near $0$ (a complicated lemma!). Do we need or want this? Passing to the limit as $\lambda\to 0$ we have
%\[y_0(0,x)=\phi_N(0)+\phi_D(0).\]
%does it mean that $0$ is an eigenvalue if and only if $\phi_N(0)+\phi_D(0)=0$?}

Formula \eqref{4.4} is analogous to formula (5.2.14) in \cite{NZ} if we interpret $y_0$ from \eqref{*} as the ``regular'' solution. Indeed, computing the Wronskian $\cW$ of $y_0$ and the exponentially decaying for $\im\sqrt{\lambda}>0$ solution $e^{i\sqrt{\lambda}x}$ yields
$\cW(y_0, e^{i\sqrt{\lambda}x})\big|_{x=0}=\phi_N(\lambda)+i\sqrt{\lambda}\phi_D(\lambda)$.
 Building on this analogy, we introduce, following \cite[Lemma 3.1.5]{Mar}, the meromorphic in $\bbC$ (thanks to $q_0=0$) function of the argument $\sl\in\bbC$,
\begin{equation}\label{dfS}
S:\sl\mapsto \frac{E(-\sqrt{\lambda})}{E(\sqrt{\lambda})},
\end{equation}
where we define
%\footnote{\DM{Note to self: for which $\lambda$ is this defined?}}
\begin{equation}
\label{4.5}
E(\sqrt{\lambda}):=\phi_N(\lambda)+i\sqrt{\lambda}\phi_D(\lambda),\quad \sl\in\C.
\end{equation}
We call $S$ in \eqref{dfS} the {\em $S$-function} and $E$ in \eqref{4.5} {\em
the Jost function} since \eqref{4.4} could be re-written as
\begin{equation}\label{4.4.prime}
y_0(\lambda,x)=\frac{1}{2i\sqrt{\lambda}}\big( E(\sqrt{\lambda})e^{-i\sqrt{\lambda}x}-E(-\sqrt{\lambda})e^{i\sqrt{\lambda}x}\big). \end{equation}
It is clear that $\sqrt{\lambda}\mapsto E(\sqrt{\lambda})$ is an entire function  because the determinants of  $\Phi_D$ and $\Phi_N$ depend polynomially on the entire functions $c_j(\cdot,x)$, $s_j(\cdot,x)$, $c'_j(\cdot,x)$ and $s'_j(\cdot,x)$, see the discussion after equation \eqref{sc}. We summarize  the usual properties of the Jost function.

%\YL{Slava, could we prove in item (i) of Lemma \ref{propE} that the {\em multiplicity} of the root of $E$ is the same as the multiplicity of the eigenvalues of $L_\infty$?}

\begin{lemma}\label{propE} Assume Hypothesis \ref{hypo:stass} and that $\lambda\neq0$  and $\im\sqrt{\lambda}\ge0$.
Then
\begin{itemize}\item[(i)]
$\lambda$ is an eigenvalue of the operator $L_\infty$ in $L^2(T_\infty)$  if and only if $E(\sqrt{\lambda})=0$. 
\item[(ii)] 
All roots of the function $\sqrt{\lambda}\mapsto E(\sqrt{\lambda})$  are located either on the real line or on the positive imaginary half line. 
\item[(iii)]  $E(\sqrt{\lambda})=0$ for a real $\sqrt{\lambda}$ if and only if the positive $\lambda\in\spec(L_N)\cap\spec(L_D)$; the  $\lambda$ is an eigenvalue of $L_\infty$ embedded into its essential spectrum.
 \item[(iv)] $E(\sqrt{\lambda})=0$ for a pure imaginary $\sqrt{\lambda}$ with $\im(\sl)>0$ if an only if the negative $\lambda$
 is an isolated eigenvalue of $L_\infty$ in $L^2(T_\infty)$. In particular, every negative $\lambda\in\spec(L_N)\cap\spec(L_D)$ is an isolated negative eigenvalue of $L_\infty$.
\end{itemize}\end{lemma}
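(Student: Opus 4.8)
The plan is to derive all four parts from Lemma~\ref{phiphi}, Lemma~\ref{rem:intersection} and Theorem~\ref{thm: 4.1}, proving (i) first and then reading off (ii)--(iv). The single analytic input is the following dichotomy for the restriction of a solution to the lead $e_0$: under the convention $\im\sqrt{\lambda}\ge0$ the exponential $e^{i\sqrt{\lambda}x}$ belongs to $L^2(0,\infty)$ exactly when $\im\sqrt{\lambda}>0$, while $e^{-i\sqrt{\lambda}x}$ never does (it grows when $\im\sqrt{\lambda}>0$ and is a non-decaying oscillation when $\sqrt{\lambda}\in\mathbb R\setminus\{0\}$); moreover, when $\sqrt{\lambda}$ is real and nonzero, a nontrivial linear combination of $e^{\pm i\sqrt{\lambda}x}$ is never square integrable on $(0,\infty)$. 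By Lemma~\ref{phiphi}(iv), the restriction $y_0$ to $e_0$ of any nonzero solution obeying the standard conditions is a scalar multiple of the right-hand side of \eqref{4.4.prime}, so the whole argument reduces to tracking that scalar and the two exponential coefficients $E(\pm\sqrt{\lambda})$.

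To prove (i), suppose first that $\lambda$ is an eigenvalue of $L_\infty$ with eigenfunction $y$; its restriction $y_T$ to the compact tree $T$ is automatically square integrable. If the scalar above is zero, then $y_0\equiv0$, and the continuity and Kirchhoff conditions at $v_0$ seen as a vertex of $T_\infty$ force $y_T$ to satisfy the Dirichlet and the Kirchhoff conditions at $v_0$ seen as a vertex of $T$; since $y_T\ne0$, it is a common eigenfunction of $L_D$ and $L_N$ (cf.\ Remark~\ref{yT}), so $\phi_D(\lambda)=\phi_N(\lambda)=0$ and $E(\sqrt{\lambda})=0$. If the scalar is nonzero, square integrability of $y_0$ forces the $e^{-i\sqrt{\lambda}x}$-coefficient in \eqref{4.4.prime}, which is $E(\sqrt{\lambda})/(2i\sqrt{\lambda})$, to vanish, so again $E(\sqrt{\lambda})=0$. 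Conversely, assume $E(\sqrt{\lambda})=0$. If $\sqrt{\lambda}\in\mathbb R\setminus\{0\}$ then $\lambda>0$ and $\phi_N(\lambda),\phi_D(\lambda)$ are real, so $E(\sqrt{\lambda})=0$ forces $\phi_N(\lambda)=\phi_D(\lambda)=0$; then by Lemma~\ref{rem:intersection} (or by Lemma~\ref{phiphi}(iii)) $L_D$ and $L_N$ share an eigenfunction on $T$, whose extension by zero to $e_0$ is a nonzero element of $\dom(L_\infty)$ with eigenvalue $\lambda$. If $\im\sqrt{\lambda}>0$, take the solution $y$ supplied by Lemma~\ref{phiphi}(i): its restriction to $e_0$ is then $-\tfrac{E(-\sqrt{\lambda})}{2i\sqrt{\lambda}}\,e^{i\sqrt{\lambda}x}\in L^2(0,\infty)$, and $y$ is nonzero, since either $E(-\sqrt{\lambda})\ne0$ so $y_0\not\equiv0$, or else $E(-\sqrt{\lambda})=0$ as well, in which case $\phi_N(\lambda)=\phi_D(\lambda)=0$ and one again takes the common eigenfunction extended by zero. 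In every case $\lambda\in\spec(L_\infty)$.

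Parts (ii)--(iv) are then short. For (ii): if $E(\sqrt{\lambda})=0$ with $\lambda\ne0$ and $\im\sqrt{\lambda}\ge0$, then by (i) $\lambda$ is an eigenvalue of the self-adjoint operator $L_\infty$ (Theorem~\ref{thm: 4.1}), so $\lambda\in\mathbb R$; writing $\sqrt{\lambda}=a+ib$ with $b\ge0$, the reality of $\lambda=a^2-b^2+2abi$ gives $ab=0$, i.e.\ $\sqrt{\lambda}$ is real or lies on the positive imaginary half line. For (iii): for real nonzero $\sqrt{\lambda}$ we have already observed $E(\sqrt{\lambda})=0\iff\phi_N(\lambda)=\phi_D(\lambda)=0\iff\lambda\in\spec(L_N)\cap\spec(L_D)$, necessarily with $\lambda>0$; by (i) this $\lambda$ is an eigenvalue of $L_\infty$, and it is embedded in $\spec_{\mathrm{ess}}(L_\infty)=[0,\infty)$. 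For (iv): for purely imaginary $\sqrt{\lambda}$ with $\im\sqrt{\lambda}>0$ we have $\lambda<0$, so by (i) $E(\sqrt{\lambda})=0\iff\lambda\in\spec(L_\infty)$; since $\lambda\notin[0,\infty)=\spec_{\mathrm{ess}}(L_\infty)$ such $\lambda$ is necessarily a normal (isolated, finite-multiplicity) eigenvalue, and the last assertion follows because any negative $\lambda\in\spec(L_N)\cap\spec(L_D)$ has $\phi_N(\lambda)=\phi_D(\lambda)=0$, hence $E(\sqrt{\lambda})=0$ with $\sqrt{\lambda}$ purely imaginary.

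I expect the only real difficulty to be the converse direction of (i): one must exhibit a solution that is at once nonzero and square integrable, which is exactly why the argument branches on whether $E(-\sqrt{\lambda})$ vanishes and on whether $\sqrt{\lambda}$ is real — the decisive point being that along the real axis the reality of $\phi_N$ and $\phi_D$ prevents precisely one of $E(\pm\sqrt{\lambda})$ from being zero, so no non-$L^2$ oscillatory eigenfunction can occur. Identifying the degenerate case $y_0\equiv0$ with a common eigenfunction of $L_D$ and $L_N$, by checking the vertex conditions at $v_0$, is routine bookkeeping (essentially Remark~\ref{yT}).
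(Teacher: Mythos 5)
Your proposal is correct and follows essentially the same route as the paper: both reduce all four assertions to the square-integrability of the lead restriction $y_0$ in the representation \eqref{*}/\eqref{4.4.prime} (via Lemma \ref{phiphi}), then use self-adjointness of $L_\infty$ for (ii), the equivalence $E(\sqrt{\lambda})=0\Leftrightarrow\phi_N(\lambda)=\phi_D(\lambda)=0$ for real $\sqrt{\lambda}$ for (iii), and $\spec_{\mathrm{ess}}(L_\infty)=[0,\infty)$ for (iv). You merely spell out more explicitly the degenerate case $y_0\equiv0$ and the construction of eigenfunctions in the converse direction of (i), which the paper leaves implicit in Lemmas \ref{rem:intersection} and \ref{phiphi}.
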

\begin{proof} Clearly, $\lambda$ is an eigenvalue of the operator $L_\infty$ in $L^2(T_\infty)$ if and only if $y_0(\lambda,\cdot)\in L^2(\R_+)$. If $\im\sqrt{\lambda}>0$ then formula \eqref{4.4.prime} shows that $y_0(\lambda,\cdot)\in L^2(\R_+)$ if and only if $E(\sl)=0$ because $e^{-i\sqrt{\lambda}x}$ growths to infinity when $x\to+\infty$.
If $\sqrt{\lambda}$ is real (and then $\lambda$ if positive) then formula \eqref{*} shows that $y_0(\lambda,\cdot)\in L^2(\R_+)$ if and only if $y_0(\lambda,\cdot)$ is identically $0$ which, in turn, happens if and only if $\phi_D(\lambda)=\phi_N(\lambda)=0$. For the real $\sqrt{\lambda}$ the latter condition is equivalent to $E(\sqrt{\lambda})=0$, finishing the proof of assertion (i). Assertion (ii) follows from (i) because $L_\infty$ is self-adjoint by Theorem \ref{thm: 4.1}. Assertion (iii)  holds because
 $\lambda\in\spec(L_N)\cap\spec(L_D)$ is equivalent to $\phi_D(\lambda)=\phi_N(\lambda)=0$. The first part of assertion (iv) follows from (i) while the second holds as $\phi_D(\lambda)=\phi_N(\lambda)=0$ trivially yields $E(\sqrt{\lambda})=0$.
\end{proof}
\begin{remark}\label{lambda=0}
Since  $\check{s}(0,x)=x$ and $\check{c}(0,x)=1$ and thus both functions do not belong to $L^2(e_0)$, we conclude from \eqref{*} that $0$ is an eigenvalue of $L_\infty$ (embedded into the essential spectrum)  if and only if $\phi_D(0)=\phi_N(0)=0$.% \YL{Check what happens if $\lambda\to0$ in \eqref{4.4.prime}}
\hfill$\Diamond$\end{remark}

For the operator $L_\infty$ we now give an analogue of Theorem \ref{Theorem 3.1prime} (with a similar proof).
\begin{theorem}\label{lem:EvsL} Under Hypothesis \ref{hypo:stass}, the multiplicity of a negative eigenvalue $\lambda_0\in\spec(L_\infty)$ is equal to the multiplicity of $\sqrt{\lambda_0}$ as a root of the function $E(\cdot)$.
\end{theorem}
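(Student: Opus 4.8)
The plan is to follow the two-part scheme of the proof of Theorem~\ref{Theorem 3.1prime}. Fix a negative eigenvalue $\lambda_0\in\spec(L_\infty)$ and write $\sqrt{\lambda_0}=i\kappa_0$, $\kappa_0>0$; by Lemma~\ref{propE}(i),(iv) we have $E(\sqrt{\lambda_0})=0$, and since $\spec_{\mathrm{ess}}(L_\infty)=[0,\infty)$ the eigenvalue $\lambda_0$ is isolated, so its algebraic multiplicity equals $m:=\dim\ker(L_\infty-\lambda_0)$. In the first part I assume in addition Hypothesis~\ref{hyp:two}, and in the second I remove it by the same holomorphic homotopy $q(\cdot,\eta)$ in the potential (see \eqref{qeta}) as in Theorem~\ref{Theorem 3.1prime}.

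Assume Hypothesis~\ref{hyp:two}. When $s(\sqrt{\lambda_0},\ell)\neq0$ I introduce the $p\times p$ matrix
\[
M(\sqrt{\lambda}):=-c(\sqrt{\lambda},\ell)\,D+A+i\sqrt{\lambda}\,s(\sqrt{\lambda},\ell)\,D^0_p,
\]
where $D^0_p=\diag\{1,0,\ldots,0\}$ is supported at the root $v_0$. Running the computation of Theorem~\ref{Theorem 3.1prime}, the only change being that the Kirchhoff condition at $v_0$ for $L_\infty$ picks up the extra term $y_0'(0)=i\sqrt{\lambda_0}\,y_0(0)$ from a decaying lead solution, one finds that $\iota\colon y\mapsto\bigl(s(\sqrt{\lambda_0},\ell)^{-1}y(v)\bigr)_{v}$ (with the lead piece reconstructed as $y_0(x)=s(\sqrt{\lambda_0},\ell)\,Y_{v_0}e^{i\sqrt{\lambda_0}x}$) is an isomorphism of $\ker(L_\infty-\lambda_0)$ onto $\ker M(\sqrt{\lambda_0})$. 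Expanding $\det M(\sqrt{\lambda})$ along the $v_0$-row and using \eqref{2.2}, \eqref{phind} (with $g=p-1$) and \eqref{4.5} gives $\det M(\sqrt{\lambda})=\psi(c(\sqrt{\lambda},\ell))+i\sqrt{\lambda}\,s(\sqrt{\lambda},\ell)\,\widehat\psi(c(\sqrt{\lambda},\ell))=E(\sqrt{\lambda})\,s(\sqrt{\lambda},\ell)$. Since $s(\sqrt{\lambda_0},\ell)\neq0$, the multiplicity of $\sqrt{\lambda_0}$ as a zero of $E$ equals that of $\kappa\mapsto\det M(i\kappa)$ at $\kappa_0$. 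For real $\kappa$ the matrix $M(i\kappa)=-c(i\kappa,\ell)D+A-\kappa\,s(i\kappa,\ell)D^0_p$ is real symmetric and real-analytic in $\kappa$, so its eigenvalues $\mu_1(\kappa),\ldots,\mu_p(\kappa)$ are real-analytic (by Rellich's theorem), and the multiplicity of $\kappa_0$ as a zero of $\det M(i\kappa)=\prod_j\mu_j(\kappa)$ equals $\dim\ker M(i\kappa_0)=m$ if and only if every $\mu_j$ with $\mu_j(\kappa_0)=0$ has $\mu_j'(\kappa_0)\neq0$. Since $\mu_j'(\kappa_0)\|\phi_j\|^2=\langle\tfrac{d}{d\kappa}M(i\kappa)\big|_{\kappa_0}\phi_j,\phi_j\rangle$ for a corresponding eigenvector $\phi_j\in\ker M(i\kappa_0)$, the step reduces to showing that the Hermitian form $\phi\mapsto\langle\tfrac{d}{d\kappa}M(i\kappa)\big|_{\kappa_0}\phi,\phi\rangle$ is definite on $\ker M(i\kappa_0)$.

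This is the crux, and the hard part of the argument. For a solution $u$ of $-u''+qu=\lambda u$ on an edge, writing $\dot u=\partial_\lambda u$ one has $\tfrac{d}{dx}(\dot u'u-\dot uu')=-u^2$, hence $\int_0^\ell u^2$ equals a difference of the Wronskian-type boundary terms $\dot u'u-\dot uu'$ at the two endpoints; on the lead, with $u(x)=e^{i\sqrt{\lambda}x}$ and $\lambda<0$, the endpoint at $+\infty$ contributes nothing because $u,\dot u$ and their derivatives decay there. Summing these identities over all edges and the lead for a $\sqrt{\lambda}$-analytic family of solutions of the edge equations that coincides with the eigenfunction $y_\phi=\iota^{-1}(\phi)$ at $\sqrt{\lambda}=\sqrt{\lambda_0}$ and satisfies, for all $\sqrt{\lambda}$ nearby, every vertex condition except Kirchhoff at $v_0$, all the interior and pendant boundary terms cancel, leaving only the term at $v_0$, which is proportional to the $\sqrt{\lambda}$-derivative at $\sqrt{\lambda_0}$ of the Kirchhoff defect. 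Passing from $\partial_\lambda$ to $\partial_\kappa$ via $\lambda=-\kappa^2$, this identity collapses to $\|y_\phi\|^2_{L^2(T_\infty)}=c_0\,\langle\tfrac{d}{d\kappa}M(i\kappa)\big|_{\kappa_0}\phi,\phi\rangle$ with a nonzero constant $c_0$ of fixed sign, independent of $\phi$; since $\|y_\phi\|^2>0$ for $\phi\neq0$, the form is definite, as needed. The remaining case $s(\sqrt{\lambda_0},\ell)=0$ is handled as Case~1 of Theorem~\ref{Theorem 3.1prime}: then $c(\sqrt{\lambda_0},\ell)=s'(\sqrt{\lambda_0},\ell)=\pm1$ by \eqref{lagr} and, writing $\psi(z)=(z^2-1)\psi_0(z)$ with $\psi_0(\pm1)\neq0$, one has $E(\sqrt{\lambda})=c'(\sqrt{\lambda},\ell)\psi_0(c(\sqrt{\lambda},\ell))+i\sqrt{\lambda}\,\widehat\psi(c(\sqrt{\lambda},\ell))$; an eigenfunction of $L_\infty$ is then parametrized by its value on the lead together with the components of its edge restrictions that are invisible to the vertices (cf.\ Lemmas~\ref{phiphi} and~\ref{rem:intersection}), and a direct count shows that the order of vanishing of $E$ at $\sqrt{\lambda_0}$ again equals $m$.

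For the second part I repeat the homotopy argument of Theorem~\ref{Theorem 3.1prime} for $L_\infty$. The family $q(\cdot,\eta)$ of \eqref{qeta} keeps $q_0\equiv0$ on the lead, so $\spec_{\mathrm{ess}}(L_\infty(\eta))=[0,\infty)$ for all $\eta$, it satisfies Hypothesis~\ref{hyp:two} at $\eta=1$, and $L_\infty(\eta)$ is a self-adjoint holomorphic family of type~(A) (\cite[Section~VII.3]{Kato}). For the eigenvalue curve $\lambda(\cdot)$ with $\lambda(0)=\lambda_0$ --- which stays negative on $[0,1]$ after, if necessary, perturbing the finite-edge potentials by a small real constant, which does not affect $\spec_{\mathrm{ess}}$, so as to avoid the finitely many $\eta$ with $0\in\spec(L_\infty(\eta))$ --- analytic perturbation theory makes the eigenvalue multiplicity $n(\lambda(\eta),\eta)$ constant in $\eta$, while by Lemma~\ref{propE}(i),(iv) the zero curves of the jointly analytic function $(\lambda,\eta)\mapsto E(\sqrt{\lambda},\eta)$ near $\lambda(\eta)<0$ are precisely these holomorphic eigenvalue curves, whence \cite[Theorem~9.1.1]{MP0} makes the order of vanishing $m(\lambda(\eta),\eta)$ of $E(\cdot,\eta)$ along the curve constant in $\eta$ as well. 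Combined with the first part ($n(\lambda(1),1)=m(\lambda(1),1)$) this gives $n(\lambda(0),0)=m(\lambda(0),0)$, which is the assertion. I expect the telescoping Lagrange-identity computation of the third paragraph --- including the construction of the auxiliary family of solutions and the bookkeeping of the boundary terms at the vertices --- to be the main obstacle; the rest is a transcription, to the operator $L_\infty$, of arguments already carried out for $L_N$.
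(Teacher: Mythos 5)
Your first part follows the paper's own skeleton (the vertex-reduction matrix $-c(\sqrt{\lambda},\ell)D+A+i\sqrt{\lambda}\,s(\sqrt{\lambda},\ell)D^0_p$, the determinant identity \eqref{Emat}, the map $\iota$ built from vertex values, then a homotopy in the potential), but the proposal has genuine gaps. The clearest one is in your second part: you keep the homotopy \eqref{qeta} of Theorem~\ref{Theorem 3.1prime} and propose to keep the eigenvalue curve negative by ``perturbing by a small real constant so as to avoid the finitely many $\eta$ with $0\in\spec(L_\infty(\eta))$''. This does not make sense as written, since $0\in\spec_{\mathrm{ess}}(L_\infty(\eta))=[0,\infty)$ for every $\eta$, and it misses the actual danger: a negative eigenvalue curve may reach the threshold $0$ at some interior $\eta$ and be absorbed into the essential spectrum, after which neither Kato's analytic continuation nor the constancy of the order of vanishing of $E(\cdot,\eta)$ along the curve is available, and the count at $\eta=1$ cannot be transported back to $\eta=0$. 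The paper removes exactly this obstruction by using a \emph{different} homotopy \eqref{qeta2}, with $\widetilde q\le 0$ and $\widetilde q-q_j\le 0$, so that the Hellmann--Feynman formula \eqref{HF} gives $d\lambda(\eta)/d\eta\le 0$ and the curve starting at a negative eigenvalue stays negative on all of $[0,1]$. Some monotonicity (or other confinement) argument of this type is indispensable and is absent from your proposal.

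In the first part there are two further issues. The sub-case $s(\sqrt{\lambda_0},\ell)=0$ is settled in your text only by ``a direct count shows''; no count is given. The paper instead proves this case cannot occur for a negative eigenvalue of $L_\infty$: if $s(\sqrt{\lambda_0},\ell)=0$ then $c(\sqrt{\lambda_0},\ell)=\pm1$ while $\widehat{\psi}(\pm1)\neq0$ (proved via \eqref{detf} and induction), which together with the simplicity argument of Case~1 of Theorem~\ref{Theorem 3.1prime} contradicts $E(\sqrt{\lambda_0})=0$; so no separate multiplicity count is needed there, and you should either reproduce such an exclusion or actually carry out your count. Finally, the step you yourself call the crux --- the definiteness of $\phi\mapsto\langle \tfrac{d}{d\kappa}M(i\kappa)\big|_{\kappa_0}\phi,\phi\rangle$ on $\ker M(i\kappa_0)$ via a telescoping Lagrange-identity computation, needed to conclude that the order of vanishing of $\det M(i\kappa)$ equals $\dim\ker M(i\kappa_0)$ --- is only announced, not proved. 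The paper takes a shorter route at this point: since $\lambda_0<0$, the matrix $-z_0D+i\sqrt{\lambda_0}s_0D^0_p+A$ is self-adjoint, so the geometric and algebraic multiplicities of its zero eigenvalue coincide, and this number is identified with the multiplicity of $\sqrt{\lambda_0}$ as a root of $E$ through \eqref{Emat}. Your Rellich-plus-derivative route could be made to work and would make the transversality in $\kappa$ explicit, but as submitted it is a sketch of the hardest step rather than a proof; combined with the homotopy problem above, the proposal does not yet establish the theorem.
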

\begin{proof} As in Theorem \ref{Theorem 3.1prime}, the proof consists of two parts:
  First, we impose Hypothesis \ref{hyp:two} and fix a negative eigenvalue $\lambda_0\in\spec(L_\infty)$.
By Remark \ref{rem:domLinfty}, we assume that $v_0$, the root of $T$,  is interior so that $d(v_0)\ge2$. 

We begin by proving that $s(\sqrt{\lambda_0},\ell)\neq0$. We argue by contradiction and
assume that $s(\sqrt{\lambda_0},\ell)=0$. As in the proof of Case 1 in Theorem \ref{Theorem 3.1prime} we conclude that $\lambda_0$ is a simple root of $\phi_N(\cdot)$. Since, by Lemma \ref{propE}(i), $\sqrt{\lambda_0}$ is a root of the function $E(\sl)=\phi_N(\lambda)+i\sl\phi_D(\lambda)$ we must have
$\phi_D(\lambda_0)=0$. This is not possible by the following argument using Theorem \ref{Theorem 3.2}: Since $s(\sqrt{\lambda_0},\ell)=0$, we have 
$z_0:=c(\sqrt{\lambda_0},\ell)=\pm1$. However, $\widehat{\psi}(\pm1)\neq0$.
To show this, we re-write \eqref{detf} as 
 $\widehat{\psi}_k(z)=\det\Psi_{\widehat{T}_k}(z)-z\widehat{\psi}_{\widehat{T}_k}(z)$ where $\Psi_{\widehat{T}_k}(z)=-zD_{\widehat{T}_k}+A_{\widehat{T}_k}$ is the $\Psi$-matrix and $\widehat{\psi}_{\widehat{T}_k}$ is the $\widehat{\psi}$-function of the tree $\widehat{T}_k$ as defined in \eqref{2.2}. Since $\det\Psi_{\widehat{T}_k}(\pm1)=0$ as mentioned in Case 1 of the proof of Theorem \ref{Theorem 3.1prime}, we have $\widehat{\psi}_k(\pm1)=\mp\widehat{\psi}_{\widehat{T}_k}(\pm1)$, which is nonzero 
as easily follows by induction in $p$. This completes the proof of the claim.

 %Theoreis a root of each of the factors $\widehat{\psi}_k$ in \eqref{3.12} and therefore $\lambda_0$ is a multiple root of $\phi_D(\cdot)$; (b) 
% The two assertions yield that $\sqrt{\lambda_0}$ is a simple root of the function $E(\sl)=\phi_N(\lambda)+i\sl\phi_D(\lambda)$. By assertion (a) and Theorem \ref{Theorem 3.2},  $\lambda_0$ is a multiple eigenvalue of $L_D$. By assertion (b) and Theorem \ref{Theorem 3.1prime},  $\lambda_0$ is a simple eigenvalue of $L_N$.  Thus the intersection \eqref{intker} is one-dimensional and by Lemma \ref{phiphi} (iii) the solution $y\in\ker(L_\infty-\lambda_0)$ in Lemma \ref{phiphi}(i) is unique up to a scalar multiple, and thus $\lambda_0$ is a simple eigenvalue of $L_\infty$ which was required to prove.

Next, we take the $(p\times p)$ matrix $D_p^0=\diag\{1,0,\ldots,0\}$, denote, for brevity, $s_0=s(\sqrt{\lambda_0},\ell)$, $s=s(\sqrt{\lambda},\ell)$, $z_0=c(\sqrt{\lambda_0},\ell)$, $z=c(\sqrt{\lambda},\ell)$, and consider the matrix $-zD+i\sqrt{\lambda}sD_p^0+A$ for $\lambda$ near $\lambda_0$.
 Acting as in the proof of Theorem \ref{Theorem 2.1}, cf.\ \eqref{PP1}, we decompose the determinant of the matrix using the first row, and then use  formula \eqref{3.8}  to obtain the identity
\begin{equation}\label{Emat}
\det\big( -zD+i\sqrt{\lambda}sD_p^0+A\big)=\psi(z)+i\sqrt{\lambda}s\widehat{\psi}(z)
=s\big(\phi_N(\lambda)+i\sl\phi_D(\lambda)\big)=sE(\sl).
\end{equation}
Since the matrix $-z_0D+i\sqrt{\lambda}s_0D_p^0+A$ is self-adjoint, the geometric multiplicity of its eigenvalue $0\in\spec\big(-z_0D+i\sqrt{\lambda}s_0D_p^0+A\big)$ is equal to the  algebraic multiplicity which, in turn, is equal by \eqref{Emat} to the multiplicity of $\sqrt{\lambda_0}$ as a root of the function $E(\cdot)$. So, to complete the proof of the lemma it remains to show that $\ker(L_\infty-\lambda_0)$ and $\ker\big(-z_0D+i\sqrt{\lambda}s_0D_p^0+A\big)$ are isomorphic. To this end, as in the proof of Theorem \ref{Theorem 3.1prime}, we fix a $y=(y_j)_{j=0}^g\in\ker(L_\infty-\lambda_0)$. In particular, \eqref{*} holds for $y_0$ by Lemma \ref{phiphi} (iv). We define $\iota(y):=(Y_v)_{v\in\cV}\in\bbC^{p\times1}$ with entries $Y_v$ given in that proof, see the discussion around \eqref{defYv}. Since $y\in\dom(L_\infty)$, the boundary conditions in \eqref{4.3} hold; in particular, since
$y_0(0)=\phi_D(\lambda_0)$ and $y'_0(0)=-\phi_N(\lambda_0)$ by \eqref{*}, we conclude that $Y_{v_0}=s_0^{-1}y_k(0)=s_0^{-1}y_0(0)=s_0^{-1}\phi_D(\lambda_0)$ for $k=1,\ldots,d(v_0)$. Formulas \eqref{ye}, \eqref{yeder}, \eqref{yeder2} at $v\neq v_0$ continue to hold. Applying Kirchhoff condition at $v_0$  and the first formula in \eqref{yeder}, we infer
\begin{equation}\label{yeder3}\begin{split}
0&=y'_0(0)+\sum_{k=1}^{d(v_0)}y'_k(0)=-\phi_N(\lambda_0)+\sum_{k=1}^{d(v_0)}\big(Y_{v_k}-z_0Y_{v_0}\big)\\&=
-\phi_N(\lambda_0)-z_0d(v_0)Y_{v_0}+\sum_{k=1}^{d(v_0)}Y_{v_k}=
\big(-zd(v_0)+i\sqrt{\lambda_0}s_0\big)Y_{v_0}+\sum_{v'\in\cV}a_{v_0v'}Y_{v_0},
\end{split}
\end{equation}
where $a_{vv'}$ are the entries of $A$ and we have used the equality $-\phi_N(\lambda_0)=i\sqrt{\lambda_0}\phi_D(\lambda_0)=i\sqrt{\lambda_0}s_0Y_{v_0}$ that follows from $E(\sqrt{\lambda_0})=0$ and the definition of $Y_{v_0}$. Formulas \eqref{yeder2} and \eqref{yeder3} show $\iota(y)\in\ker\big(-z_0D+i\sqrt{\lambda}s_0D_p^0+A\big)$ and then $\iota$ is the required isomorphism by the same argument as in the proof of Theorem \ref{Theorem 3.1prime}. This concludes the first part of the proof of the theorem.

In the second part of the proof we impose Hypothesis \ref{hypo:stass} and make a homotopy argument similar to the second part of the proof of Theorem \ref{Theorem 3.1prime}. Indeed, let us consider the family of potentials $q(\cdot,\eta)$, $\eta\in[0,1]$, similarly to \eqref{qeta},
\begin{equation}\label{qeta2}
q(x,\eta)=\eta^2\widetilde{q}(\ell-x)+\eta\big(\widetilde{q}(x)-q_j(x)\big)+q_j(x),\, x\in e_j, j=1,\ldots,g, \eta\in[0,1],
\end{equation} 
and set $q(x,\eta)=0$ for $x\in e_0$.  Here, $\widetilde{q}\in L^2(0,\ell)$  is any real potential such that $\widetilde{q}(x)\le 0$ and $\widetilde{q}(x)-q_j(x)\le 0$ and  for almost all $x\in e_j$ and all $j=1,\ldots,g$.  
We denote by $L_\infty(\eta)$ the operator in $L^2(T_\infty)$  that corresponds to the potential $q(\cdot,\eta)$ so that $L_\infty(0)=L_\infty$, the original operator, and notice that $q(\cdot,1)$ satisfies Hypothesis \ref{hyp:two}. Thus, the multiplicities of the eigenvalues of $L_\infty(1)$ and the respective zeros of the Jost function $E(\sl,1)$ are equal by the first part of the proof.

Acting as in the proof of Theorem \ref{Theorem 3.1prime}, we use perturbation theory for the holomorphic family of self-adjoint operators $L_\infty(\eta)$ from \cite[Section VII.3]{Kato} to construct holomorphic curves of the eigenvalues $\lambda(\eta)\in\spec(L_\infty(\eta))$ that start at the negative eigenvalues $\lambda(0)\in\spec(L_\infty(0))$.
By the  perturbation theory the holomorphic curves $\lambda(\cdot)$ are such that $n(\lambda(0),0)=n(\lambda(1),1)$. Also, we use \cite[Theorem 9.1.1]{MP0} to show that $m(\lambda(0),0)=m(\lambda(1),1)$; 
here we denote by $n(\lambda,\eta)$ the (geometric) multiplicity of $\lambda\in\spec(L_\infty(\eta))$ and by $m(\sl,\eta)$ the multiplicity of $\sl$ as a zero of the function $\sl\mapsto E(\sl,\eta)=\phi_N(\lambda,\eta)+i\sl\phi_D(\lambda,\eta)$.
%Since $q(\cdot,1)$ satisfies Hypothesis \ref{hyp:two}, by the first part of the proof of the lemma we have
%$n(\lambda(1),1)=m(\sqrt{\lambda(1)},1)$. This yields $n(\lambda(0),0)=m(\sqrt{\lambda(0)},0)$ as required.

 We use a formula for the $\eta$-derivative of $\lambda(\cdot)$ sometimes called the  Hellmann-Feynman formula and sometimes Hadamard variational formula, cf., e.g.,
\cite[Theorem VII.3.6, (3.18)]{Kato},  or, for a more general situation, \cite[Theorem 3.25]{LatSuk}. It asserts that 
\begin{equation}\label{HF}
\frac{d\lambda(\eta)}{d\eta}=\big\langle\frac{d L_\infty(\eta)}{d\eta}u(\cdot, \eta),u(\cdot, \eta)\big\rangle_{L^2(T_\infty)}=\big\langle\frac{\partial q(\cdot,\eta)}{\partial\eta}u(\cdot, \eta),u(\cdot, \eta)\big\rangle_{L^2(T_\infty)},\, \eta\in[0,1],
\end{equation}
where $u(\cdot, \cdot)$ is the holomorphic in $\eta$ family of eigenfunctions so that $L_\infty(\eta)u(\cdot, \eta)=\lambda(\eta)u(\cdot, \eta)$ and $\|u(\cdot, \eta)\|_{L^2(T_\infty)}=1$. 
%
%We recall that $q_j\in L^2(0,\ell)$ for each $j=1,\ldots,g$ by Hypothesis \ref{hypo:stass}.
%Since one can approximate the potentials $q_j\in L^2(0,\ell)$ by bounded potentials so that $\lambda(\eta)$ is approximated by the respective eigenvalues, without loss of generality we may assume that $q_j\in L^\infty(0,\ell)$ and fix $a>8\max_{1\le j\le g}\|q_j\|_\infty$.
Now \eqref{qeta2} and the choice of $\widetilde{q}$ in \eqref{qeta2} show that \begin{equation*}
\frac{d\lambda(\eta)}{d\eta}=\sum_{j=1}^g\big\langle  \big(2\eta \widetilde{q}(\ell-\cdot)+\widetilde{q}(\cdot)-q_j(\cdot)\big)u(\cdot, \eta), u(\cdot, \eta)\big\rangle_{L^2(e_j)} \le 0\end{equation*}
and so $\eta\mapsto\lambda(\eta)$ is a nonincreasing function of $\eta\in[0,1]$. Thus, if $\eta$ changes from 1 to 0 then the eigenvalues $\lambda(\eta)$ move along the real line to the right. 
Since $q(\cdot,1)$ satisfies Hypothesis \ref{hyp:two}, by the first part of the proof of the theorem we have
$n(\lambda(1),1)=m(\sqrt{\lambda(1)},1)$. This yields $n(\lambda(0),0)=m(\sqrt{\lambda(0)},0)$ as required to finish the proof of the theorem.
\end{proof}

%The following elementary example shows that, in general, negative isolated eigenvalues of $L_\infty$ in $L^2(T_\infty)$ are not necessarily common Dirichlet and Neumann eigenvalues on $T$.
%\begin{example}\YL{CHECK IT???}
% We attach a half-line to the root of the graph $p=3(a)$ in Figure \ref{lowp}, set $\ell=1$ and let the potential be identically equal to zero on all edges. We claim that  $E(\sqrt{\lambda_0})=0$ but $\phi_N(\lambda_0)\neq0$ and $\phi_D(\lambda_0)\neq0$ when $\lambda_0=-(\log\sqrt{3})^2<0$. To begin the proof of the claim, we recall that $\phi_N(\lambda)=2\sqrt{\lambda}\sin(\sqrt{\lambda})\cos(\sqrt{\lambda})$ \YL{Check the sign!!} and $\phi_D(\lambda)=(\cos(\sqrt{\lambda}))^2$ by the calculation of $\phi_N$ and $\phi_D$ in Example \ref{ex:3.1} with $c$ and $s$ replaced by $\check{c}$ and $\check{s}$ and $\ell=1$. Since $\cos(\sqrt{\lambda_0})=2/\sqrt{3}$ and $\sin(\sqrt{\lambda_0})=-1/(i\sqrt{3})$, we have $E(\sqrt{\lambda_0})=0$ while $E(-\sqrt{\lambda_0})\neq0$ as claimed.
%\hfill$\Diamond$\end{example}

%\YL{Slava, what if we assume that $q_0\in L^1(\R_+)$ and consider the Jost solutions $e(\sqrt{\lambda},x)$, $e(-\sqrt{\lambda},x)$ such that $e^{-i\sqrt{\lambda}x}e(\sqrt{\lambda},x)\to1$ as $x\to\infty$ so that formula \eqref{4.4.prime} becomes
%\[y_0(\lambda,x)=\frac{1}{2i\sqrt{\lambda}}
%\big(E(-\sqrt{\lambda}) e(\sqrt{\lambda},x)+E(\sqrt{\lambda}) e(-\sqrt{\lambda},x)\big)\]
%for the solution $y_0=\phi_Ns(\sqrt{\lambda},x)+\phi_Dc(\sqrt{\lambda},x)$????}
The next result relates the number of isolated eigenvalues of  $L_\infty$ and $L_N$. The proof is based on the homotopy techniques for the operator pencils from \cite[Chapter 1]{MP0}.

\begin{lemma}\label{lem:LinfLN} Under Hypothesis \ref{hypo:stass}, the number of the isolated (negative) eigenvalues of $L_\infty$ is equal to the number of the isolated (negative) eigenvalues of $L_N$ (in both cases counting their (geometric) multiplicities).
\end{lemma}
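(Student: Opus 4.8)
The plan is to count eigenvalues of both operators via a homotopy that scales the potential on the compact part, exploiting the fact that a lead with zero potential contributes no discrete spectrum but only the essential spectrum $[0,\infty)$. First I would recall the characteristic functions: negative eigenvalues of $L_N$ correspond (with multiplicities, by Theorem \ref{Theorem 3.1prime}) to zeros of $\phi_N(\lambda)$ with $\lambda<0$, while negative eigenvalues of $L_\infty$ correspond (with multiplicities, by Theorem \ref{lem:EvsL}) to zeros of $E(\sqrt{\lambda})=\phi_N(\lambda)+i\sqrt{\lambda}\phi_D(\lambda)$ with $\sqrt\lambda$ on the positive imaginary half-line, i.e.\ $\lambda<0$. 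So the statement reduces to a purely function-theoretic claim: the number of negative zeros of $\phi_N$ equals the number of zeros of $E(\sqrt{\cdot})$ on the negative half-axis, counting multiplicities. I would set $\mu=\sqrt{-\lambda}>0$ for $\lambda<0$, so $\sqrt\lambda=i\mu$ and $E=\phi_N-\mu\,\phi_D$ as a function of $\mu>0$.

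Next I would introduce the homotopy $q(\cdot,t)=tq(\cdot)$, $t\in[0,1]$, on the compact edges (keeping $q_0=0$ on the lead), with operators $L_N(t)$ and $L_\infty(t)$ and characteristic functions $\phi_N(\lambda,t)$, $\phi_D(\lambda,t)$, $E(\sqrt\lambda,t)$. At $t=0$ one has $\check\phi_N$ and $\check\phi_D$ given by \eqref{checknd}. The idea is that as $t$ runs over $[0,1]$ no negative eigenvalue of $L_N(t)$ can escape to $0^-$ or emerge from $0^-$ except possibly at isolated exceptional $t$, and similarly for $L_\infty(t)$; more to the point, I would argue that the number of negative zeros of $\phi_N(\cdot,t)$ and the number of negative zeros of $E(\sqrt{\cdot},t)$ are each \emph{independent of $t$}, so it suffices to check equality at $t=0$. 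To see the invariance I would use an argument-principle / continuity-of-roots argument on a fixed bounded region: since $L_N(t)$ and $L_\infty(t)$ are bounded below uniformly in $t\in[0,1]$ (by the same reasoning as Theorem \ref{thm: 4.1} and Theorem \ref{Theorem 3.1prime}, using that $tq$ is uniformly bounded in $L^2$), all negative eigenvalues lie in a fixed interval $[-M,0)$; the potential obstacle is a root crossing $\lambda=0$, which I would rule out generically by perturbing $q$ by a small negative constant if necessary (this shifts all eigenvalues left and does not change the counts, exactly as in the homotopy arguments in the proofs of Theorem \ref{Theorem 3.1prime} and Theorem \ref{lem:EvsL}), and a root escaping to $-\infty$, which cannot happen by the uniform lower bound. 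Thus both integer-valued counting functions are locally constant on $[0,1]$, hence constant.

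It then remains to verify the claim at $t=0$, i.e.\ for the zero potential. Here $\phi_N(\lambda,0)=\sqrt\lambda(\sin\sqrt\lambda\,\ell)^{-1}\psi(\cos\sqrt\lambda\,\ell)$ and $\phi_D(\lambda,0)=\widehat\psi(\cos\sqrt\lambda\,\ell)$. For $\lambda<0$ write $\sqrt\lambda=i\mu$, $\mu>0$, so $\cos\sqrt\lambda\,\ell=\cosh\mu\ell=:z\in(1,\infty)$ and $\sin\sqrt\lambda\,\ell = i\sinh\mu\ell$, giving $\phi_N = \dfrac{i\mu}{i\sinh\mu\ell}\psi(z)=\dfrac{\mu\,\psi(z)}{\sinh\mu\ell}$ and $E(\sqrt\lambda,0)=\phi_N-\mu\phi_D=\mu\Bigl(\dfrac{\psi(z)}{\sinh\mu\ell}-\widehat\psi(z)\Bigr)$. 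Since $\sinh\mu\ell>0$ and $\mu>0$ on the range of interest, a zero of $\phi_N$ at $\lambda<0$ is a zero of $\psi(\cosh\mu\ell)$, and a zero of $E$ is a solution of $\psi(z)=\sinh(\mu\ell)\,\widehat\psi(z)$ with $z=\cosh\mu\ell$. I would then use the key structural facts from Section \ref{sec:2}: $\psi$ and $\widehat\psi$ are the characteristic polynomials of the (self-adjoint) normalized Laplacian pencils, all their zeros $z$ are real and lie in $[-1,1]$ (this is standard: the generalized eigenvalues of $A$ relative to $D$ lie in $[-1,1]$), while we are evaluating at $z=\cosh\mu\ell>1$. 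Consequently $\psi(z)$ has \emph{no} zeros for $z>1$, so $L_N(0)$ has no negative eigenvalues at all; and for $E$ one shows likewise that $\psi(z)/\widehat\psi(z)=\sinh\mu\ell$ has no solution with $z>1$, because the ratio $\psi/\widehat\psi = R_0(z)$ from Remark \ref{rem:FP} satisfies $R_0(z)<0$ for all $z>1$ (each $R_{k_1\ldots}$ in the branched continued fraction is negative for $z>0$, by downward induction from the leaves where $R_{k_1\ldots k_n}=-z<0$, and a sum of reciprocals of negatives subtracted again stays negative), so $R_0(z)=\sinh\mu\ell>0$ is impossible. Hence both counts are $0$ at $t=0$, and by the homotopy invariance both counts agree for the original operators, which proves the lemma. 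The main obstacle I anticipate is making the homotopy-invariance step fully rigorous — precisely, ruling out eigenvalues crossing $\lambda=0$ and tracking multiplicities through possible collisions — but this is handled exactly as in the already-proved Theorems \ref{Theorem 3.1prime} and \ref{lem:EvsL}, via the analytic-perturbation results of \cite{Kato} and \cite[Theorem 9.1.1]{MP0} together with a generic small-constant shift of the potential.
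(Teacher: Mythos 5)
Your reduction to counting zeros of $\phi_N$ and of $E$ on the negative half-axis (via Theorems \ref{Theorem 3.1prime} and \ref{lem:EvsL}), and your endpoint computation at zero potential (zeros of $\psi$, $\widehat\psi$ lie in $[-1,1]$ while $z=\cosh\mu\ell>1$, and $\psi(z)/\widehat\psi(z)<0$ for $z>1$), are sound in themselves. But the central step of your argument fails: the number of negative zeros of $\phi_N(\cdot,t)$ (equivalently, of negative eigenvalues of $L_N(t)$) is \emph{not} independent of $t$ under the potential-scaling homotopy $q(\cdot,t)=tq$, and neither is the count for $E(\sqrt{\cdot},t)$. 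As $t$ decreases, bound states are absorbed into the essential spectrum by crossing $\lambda=0$; indeed your own computation at $t=0$ shows both operators are non-negative there, whereas for a sufficiently negative potential at $t=1$ both $L_N$ and $L_\infty$ certainly have negative eigenvalues. So if your invariance claim were true, the lemma would say both counts are always zero, which is false. The uniform lower bound only rules out escape to $-\infty$, and the ``perturb $q$ by a small negative constant'' device only moves crossings off the endpoints $t=0,1$; it cannot eliminate crossings of $\lambda=0$ in the interior of $[0,1]$, which is exactly where the counts jump. What your scheme would actually require — and what is the real content of the lemma — is that zeros of $\phi_N(\cdot,t)$ and zeros of $E(\sqrt{\cdot},t)$ pass through $\lambda=0$ at the same values of $t$ with the same multiplicities, and this is nowhere addressed. (A smaller gap: your induction ``each $R_{k_1\ldots}$ is negative for $z>0$'' does not close as stated, since $-dz-\sum 1/R_{\text{child}}$ is $-dz$ plus a \emph{positive} term; one needs a quantitative bound, e.g.\ negative definiteness of $-zD+A$ and $-z\widehat D+\widehat A$ for $z>1$, to conclude $\psi/\widehat\psi<0$ there.)

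The paper avoids this trap by homotoping something else: it keeps the potential fixed and deforms the \emph{spectral-parameter structure}. It introduces the operator $\wL$ on $L^2(T)\oplus\bbC$ and the pencils $P_K(\lambda)=\lambda M+i\sl K-\wL$ and $P_0(\lambda)=\lambda M-\wL$, identifies $\ker(L_\infty-\lambda)$ with $\ker P_K(\lambda)$ and $\ker(L_N-\lambda)$ with $\ker P_0(\lambda)$ for $\lambda<0$, invokes \cite[Lemma 1.2.1, Theorem 1.3.3]{MP0} to equate the count for $P_K$ with the count of negative eigenvalues of $\wL$, and then connects $\wL$ to $P_0$ through the family $(I-\eta K)\lambda-\wL$. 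The key point is the derivative formula \eqref{derlamb}, $\lambda'(\eta)=f(\eta)\lambda(\eta)$, which forces every eigenvalue curve to keep its sign, so no crossing of $\lambda=0$ can occur along that homotopy — precisely the property your potential-scaling homotopy lacks.
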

\begin{proof} % In Part II of the proof we use results from \cite[Chapter 6]{MP} to demonstrate that the geometric multiplicity of an isolated eigenvalue $\lambda_0$ of $L_N$, respectively, of $L_\infty$, is equal to the multiplicity of $\lambda_0$, respectively, of  $\sl_0$,   as a zero of the characteristic function $\phi_N(\lambda)$, respectively, of the Jost function $E(\sl)$.
To begin, we introduce two $(g+1)\times(g+1)$ matrices,  $M=\diag\{1,\ldots,1,0\}$ and $K=\diag\{0,\ldots,0,1\}$, and the self-adjoint operator $\wL$
acting in the space $L^2(T)\oplus\bbC=\big(\oplus_{j=1}^gL^2(0,\ell)\big)\oplus\bbC$ by the rule \begin{equation}\label{defwL}
\wL:Y\mapsto\begin{bmatrix}(-y''_j+q_jy_j)_{j=1}^g\\
\sum_{k=1}^{d(v_0)}y'_k(0)\end{bmatrix}; \text{ here and below we denote } Y=\begin{bmatrix}(y_j)_{j=1}^g\\c\end{bmatrix}.\end{equation}
We define the domain $\dom(\wL)$ as the set of $Y$'s such that $y_j\in H^2(0,\ell)$, $j=1,\ldots,g$, and $c\in\bbC$ satisfy the standard boundary conditions at all vertices except the root $v_0$ and the conditions $y_k(0)=c$, $k=1,\ldots,d(v_0)$, at the root.
Also, we introduce the operator pencils
\begin{equation}\label{defPK}
P_K(\lambda):=\lambda M+i\sl K-\wL \text{ and } P_0(\lambda):=\lambda M-\wL \text{ for $\lambda\in\bbC$}
\end{equation}
with the domains $\dom(\wL)$. The pencil $P_K$ is the quadratic in $\sl$ pencil considered in \cite[Chapter 1]{MP0} with $\lambda$ replaced by $-\sl$ to account for the plus sign in \eqref{defPK} in front of $i\sl K$ versus the minus sign used in \cite{MP0}. It is easy to see that \cite[Condition 1]{MP0} is satisfied for $P_K$ and thus \cite[Lemma 1.2.1]{MP0} and \cite[Theorem 1.3.3]{MP0} apply. In particular, we conclude from the two results that 
\begin{equation}\label{spsp}
\card\{\lambda\in\spec(P_K): \im(\sl)>0, \re(\sl)=0\}=\card\{\lambda\in\spec(\wL):\lambda<0\},
\end{equation}
where we count the eigenvalues with their multiplicities and recall that $\lambda\in\spec(P)$ for a pencil $P$ if and only if there is a nonzero $Y\in\ker(P(\lambda))$. We claim that 
\begin{equation}\label{PKP0}
\card\{\lambda\in\spec(\wL):\lambda<0\}=\card\{\lambda\in\spec(P_0): \im(\sl)>0, \re(\sl)=0\}.
\end{equation}
Postponing the proof of the claim, we use \eqref{PKP0} to derive
 the required equality
 \begin{equation}\label{LinfLN}
 \card\{\lambda\in\spec(L_\infty):\lambda<0\}=\card\{\lambda\in\spec(L_N):\lambda<0\}
 \end{equation}
 as follows. Assume that $\lambda<0$ and $\im(\sl)>0$. The assertion $Y\in\ker(P_0(\lambda))\subset\dom(\wL)$ is equivalent to the fact that the functions $y_j$, $j=1,\ldots,g$, satisfy the eigenvalue equations \eqref{4.1}, the standard boundary conditions at all vertices but $v_0$, the continuity condition $y_k(0)=c$, $k=1,\ldots,d(v_0)$, and the Kirchhoff's(-Neumann) condition $\sum_{k=1}^{d(v_0)}y'_k(0)=0$ at $v_0$. In other words, $(y_j)_{j=1}^g\in\ker(L_N-\lambda)$ if and only is $Y\in\ker(P_0(\lambda))$. Therefore
  \begin{equation}\label{LinfLN2}
\card\{\lambda\in\spec(P_0): \im(\sl)>0, \re(\sl)=0\}=\card\{\lambda\in\spec(L_N):\lambda<0\}.
 \end{equation}
Furthermore, the assertion $Y\in\ker(P_K(\lambda))\subset\dom(\wL)$ is equivalent to the fact that the functions $y_j$, $j=1,\ldots,g$, satisfy the eigenvalue equations \eqref{4.1}, the standard boundary conditions at all vertices but $v_0$, the continuity condition $y_k(0)=c$, $k=1,\ldots,d(v_0)$, and the Kirchhoff's(-Robbin) condition $i\sl c=\sum_{k=1}^{d(v_0)}y'_k(0)$ at $v_0$, where the last equation follows because the last entry of $P_K(\lambda)Y$ is zero. For the given $y_j$ on $e_j$, $j=1,\ldots,g$, we now define the function $y_\infty$ on $T_\infty$ by letting $y_\infty(x)=y_j(x)$ for $x\in e_j$ and $j=1,\ldots,g$, and $y_\infty(x)=y_1(0)e^{i\sl x}$ for $x\in\e_0=[0,\infty)$. Due to $\im(\sl)>0$, the function $y_\infty$ exponentially decays on $e_0$, and, conversely, any $L^2$-solution to \eqref{4.2} on $e_0$ is of the form $ce^{i\sl x}$. We conclude that $y_\infty\in
\ker(L_\infty-\lambda)$ if and only is $Y\in\ker(P_K(\lambda)$ with $c=y_1(0)$, and therefore
  \begin{equation}\label{LinfLN3}
\card\{\lambda\in\spec(P_K): \im(\sl)>0, \re(\sl)=0\}=\card\{\lambda\in\spec(L_\infty):\lambda<0\}.
 \end{equation}
 Clearly, \eqref{LinfLN} follows from \eqref{spsp}, \eqref{PKP0}, \eqref{LinfLN2}, \eqref{LinfLN3}.
 
 It remains to justify the claim \eqref{PKP0}. The proof below is a version of the homotopy argument frequently used in \cite[Section 1.3]{MP0}. Indeed, we introduce a family of linear operator pencils,
 \begin{equation}\label{defPeta}
 P(\lambda,\eta):=(I-\eta K)\lambda-\wL, \text{ where $\eta\in[0,1]$},
 \end{equation}
 so that $P(\lambda,0)=\lambda-\wL$ and $P(\lambda,1)=\lambda M-\wL=P_0(\lambda)$. We consider a (differentiable) curve of the eigenvalues $\lambda=\lambda(\eta)$ and the respective eigenvectors $Y=Y(\eta)\in\ker(P(\lambda,\eta))$ of unit norm in $L^2(T)\oplus\bbC$. We differentiate in $\eta$ the eigenvalue equation $P(\lambda(\eta),\eta)Y(\eta)=0$, compute the scalar product in $L^2(T)\oplus\bbC$ of the result with $Y(\eta)$, use that $\wL$ and $K$ are self-adjoint, and then use the eigenvalue equation again. This results in the formula
 \begin{equation}\label{derlamb}
 \frac{d\lambda(\eta)}{d\eta}=f(\eta)\lambda(\eta), \, \eta\in[0,1],
 \text{ where $f(\eta):=(c(\eta))^2/\big(1-\eta(c(\eta))^2\big)$,}
 \end{equation}
 and $c(\eta)$ is the last entry of the vector $Y(\eta)$ as in \eqref{defwL} and $\|Y\|^2=\sum\|y_j\|_{L^2(0,\ell)}^2+c^2=1$.
 Solving the ODE \eqref{derlamb} shows that if $\lambda(0)<0$ then the function $\lambda(\cdot)$ is decreasing while if $\lambda(0)>0$ then the function $\lambda(\cdot)$ is increasing for $\eta\in[0,1]$. Thus, the number of the negative eigenvalues of $P(\lambda,0)$ and $P(\lambda,1)$ must be the same which is the required claim \eqref{PKP0}.  \end{proof}
 
In the next theorem, which immediately  follows from  Lemmas \ref{propE} and \ref{lem:LinfLN} and Theorems \ref{Theorem 3.1prime} and \ref{lem:EvsL}, we summarize the spectral picture for the isolated eigenvalues of $L_N$ and $L_\infty$ in terms of the characteristic and Jost functions.

\begin{theorem}\label{Theorem 4.2} 
We assume Hypothesis \ref{hypo:stass}, impose standard boundary conditions at all vertices of $T_\infty$, and let $q=0$ on $e_0$. Then:

\textbf{1.}\,
All zeros of the function $\sl\mapsto E(\sqrt{\lambda})$ are located either in the closed lower half-plane or on a finite interval of the positive imaginary half-axis. 

\textbf{2.}\, The number of zeros of the function $\sl\mapsto E(\sqrt{\lambda})$  located on the positive imaginary half-axis  (counted with their multiplicities) is the same as the number of negative  zeros of the function $\lambda\mapsto\phi_N(\lambda)$ (counted with their multiplicities) is the same as the number of the negative eigenvalues of the operator $L_N$ (counting their multiplicities) and is the same as the number of the negative eigenvalues of the operator $L_\infty$ (counting their multiplicities).
\end{theorem}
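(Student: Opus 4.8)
The plan is to assemble the four results cited immediately above into the required location claim and chain of cardinalities; no new machinery is needed, so the work is essentially bookkeeping of multiplicities plus a check of the exceptional value $\lambda=0$. It is convenient to record first that all negative eigenvalues of $L_N$ and of $L_\infty$ are isolated of finite multiplicity: $\spec_{\mathrm{ess}}(L_N)=\emptyset$ since $T$ is compact, and $\spec_{\mathrm{ess}}(L_\infty)=[0,\infty)$ by Theorem \ref{thm: 4.1}; hence all four quantities in the statement are finite nonnegative integers. For item \textbf{1}, I would invoke Lemma \ref{propE}(ii): any zero $\sl$ of $E$ with $\im\sl\ge0$ lies on the real axis --- hence in the closed lower half-plane --- or on the positive imaginary half-axis. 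That the latter zeros fill only a finite interval follows from Lemma \ref{propE}(iv): a purely imaginary zero $\sl=it$, $t>0$, corresponds to a negative isolated eigenvalue $\lambda=-t^2$ of $L_\infty$, and $L_\infty\ge -m$ for some $m>0$ by Theorem \ref{thm: 4.1}, so $t\le\sqrt{m}$ and all such $\sl$ lie in $i(0,\sqrt{m}\,]$. Every remaining zero of $E$ lies in the open lower half-plane, and the possible zero $\sl=0$ lies on $\R$; this gives item \textbf{1}.

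For item \textbf{2}, I would establish the following chain, all cardinalities counted with multiplicities:
\begin{align*}
\card\{\sl:\, E(\sl)=0,\, \re\sl=0,\, \im\sl>0\}
&=\card\{\lambda\in\spec(L_\infty):\lambda<0\}\\
&=\card\{\lambda\in\spec(L_N):\lambda<0\}
=\card\{\lambda<0:\, \phi_N(\lambda)=0\}.
\end{align*}
The first equality combines Lemma \ref{propE}(iv), which gives a bijection $\lambda\mapsto i\sqrt{|\lambda|}$ between the negative eigenvalues of $L_\infty$ and the purely imaginary zeros of $E$ in the open upper half-plane, with Theorem \ref{lem:EvsL}, which says this bijection preserves multiplicities (the order of vanishing of $E$ at $\sl$ equals the multiplicity of $\lambda\in\spec(L_\infty)$, geometric and algebraic multiplicity agreeing since $L_\infty$ is self-adjoint). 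The second equality is exactly Lemma \ref{lem:LinfLN}. The third is Theorem \ref{Theorem 3.1prime} applied eigenvalue by eigenvalue and summed over the negative part of $\spec(L_N)$, using that the zeros of $\phi_N$ are precisely the eigenvalues of $L_N$ (the kernel of $\Phi_N(\lambda)$ being isomorphic to the eigenspace of $L_N$ at $\lambda$).

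The one step where I would be most careful --- the closest thing here to a genuine obstacle --- is the exceptional value $\lambda=0$, that is, $\sl=0$. By Remark \ref{lambda=0} one may have $E(0)=0$, precisely when $\phi_D(0)=\phi_N(0)=0$; but $0$ is not a negative eigenvalue of $L_\infty$ or of $L_N$, not a negative zero of $\phi_N$, and not a point of the open positive imaginary half-axis, so it contributes to none of the four counts and the displayed equalities are unaffected. Concatenating them yields item \textbf{2}, and the theorem follows.
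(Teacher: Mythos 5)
Your proposal is correct and follows exactly the route the paper intends: the paper states that Theorem \ref{Theorem 4.2} follows immediately from Lemmas \ref{propE} and \ref{lem:LinfLN} together with Theorems \ref{Theorem 3.1prime} and \ref{lem:EvsL}, and your argument assembles precisely these four results (location of zeros plus boundedness from below of $L_\infty$ for item \textbf{1}, and the chain of multiplicity-preserving identifications for item \textbf{2}). Your extra care with $\lambda=0$ and with finiteness of the counts is harmless and consistent with the paper.
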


To conclude, we remark that the proof of Lemma \ref{lem:LinfLN} (and therefore of Theorem \ref{Theorem 4.2}) that we presented above is based on
the results in \cite[Chapter 1]{MP0} regarding quadratic operator pencils. There is yet another strategy of the proof based on the resuts in \cite[Section 5.2]{MP0} regarding the shifted Hermite-Biehler functions,
cf.\ \cite[Chapter 6]{MP0} and also  \cite{PUMZ07}. The second strategy, which will be explained elsewhere, does not involve operator pencils directly, and uses general functional properties of the Jost and characteristic functions.

%\YL{MODIFIED PLEASE CHECK BELOW}
%\begin{remark} If $\lambda_0<0$ is an isolated eigenvalue for the problem on $T_\infty$ then $y_0$ from \eqref{*} with $\lambda=\lambda_0$ is in $L^2(\R_+)$ and therefore must be zero so that $\phi_D(\lambda_0)=\phi_N(\lambda_0)=0$ and thus $\lambda_0\in\spec(L_D)\cap\spec(L_N)$. Conversely, if $\lambda_0\in\spec(L_D)\cap\spec(L_N)$ and if $\lambda_0<0$ then $\lambda_0$ is an isolated eigenvalue of the problem on $T_\infty$ while if $\lambda_0\ge0$ then $\lambda_0$ is an eigenvalue imbedded into essential spectrum of the problem on $T_\infty$.
%\hfill$\Diamond$\end{remark}
%\YL{END OF MODIFIED PORTION}

\section{Recovering the shape of a tree by scattering data}\label{sec:5}

In this section we consider the scattering problem on the tree $T_\infty$ obtained by attaching a lead (half-infinite edge)  to one of the vertices of our equilateral  compact tree $T$ with $p=p_{{}_T}$ vertices. 

Our objective is to recover the shape of the tree $T$ provided we are given some scattering information obtained from the respective $S$-function \eqref{dfS} (the scattering function, cf.\ \cite{Taylor}). In fact, we show how to recover the polynomials $\psi(z)$ and $\widehat{\psi}(z)$ defined in \eqref{2.2} from given $2p+1$ scalars $f_k,\hat{f}_{\hat{k}}$, $k=0,1,\ldots,p$, $\hat{k}=0,1,\ldots,p-1$, which are the limiting values (along some  appropriately chosen sequences) of certain functions, $F$ and $\widehat{F}$, constructed by means of: (a) the $S$-function introduced in Section \ref{sec:scattering}, and (b) the common eigenvalues of the Dirichlet and Neumann eigenvalue problems on $T$. As soon as the ratio of the polynomials $\psi(z)$ and $\widehat{\psi}(z)$ is obtained, we use the algorithm described in Section \ref{sec:2} to recover the shape of the graph.

Given a graph $T$ one can attach the lead to its different vertices. The characteristic function $\phi_N$ will not change if we choose $v_0$ differently but $\phi_D$ might. However, there are graphs for which $\phi_D$ does not change, cf.\ \cite{Pistol21},
 thus providing  examples of  graphs with the same $S$-function but of different shape.

As in Section \ref{sec:3} and as in \cite{MuP}, we assume that the potential $q_0$ is identically zero on the lead $e_0$.
As in \eqref{dfS}--\eqref{4.5}  we  can write the $S$-function of the scattering problem on $T_\infty$ as 
\begin{equation}
\label{5.1}
S(\sl)=\frac{\phi_N(\lambda)-i\sqrt{\lambda}\phi_D(\lambda)}{\phi_N(\lambda)+i\sqrt{\lambda}\phi_D(\lambda)}=\frac{\widetilde{E}(-\sl)}{\widetilde{E}(\sl)},\, \sl\in\bbC,
\end{equation}
where $\widetilde{E}(\sl)$ and $\widetilde{E}(-\sl)$ have no common zeros.
%The  S-function can be found from scattering experiments \YL{which experiments, not sure I understand?} . However, if 

Unlike the standard scattering theory with no graph $T$ attached to the lead,
there may exist common zeros of $\phi_N(\lambda)$ and $\phi_D(\lambda)$. The common zeros (in other words, the points in $\spec(L_D)\cap\spec(L_N)$, cf.\ also Lemma \ref{rem:intersection}) may be either negative or nonnegative. In the former case the respective values of $\lambda$ correspond to the pure imaginary $\sl$ and give isolated eigenvalues of $L_\infty$ while in the latter case the respective values of $\lambda$ are the eigenvalues of the operator $L_\infty$ in $L^2(T_\infty)$ embedded into its essential spectrum, cf.\ Lemma \ref{propE}.
In any case, the corresponding to the common zeros of $\phi_D$ and $\phi_N$ factors in the numerator and in the denominator of the first fraction in \eqref{5.1} cancel each other. This tells us that to be able to recover the polynomials $\psi$ and $\widehat{\psi}$ we will need to know not only the values of the $S$-function but the location of the common zeros of $\phi_D$ and $\phi_N$. 

%Using \eqref{3.8} and \eqref{3.12}, under Hypothesis \ref{hyp:two} we can re-write \eqref{4.5} as
%\begin{equation}\label{4.4.two}
%E(\sl)=(s(\sl,\ell))^{-1}\psi(c(\sl,\ell))-i\sl\widehat{\psi}(c(\sl,\ell)).
%\end{equation}
%According to (\ref{3.8}), (\ref{3.12}) and \eqref{4.4.two}, under Hypothesis \ref{hyp:two}  
%We can rewrite \eqref{5.1} as follows,
%\begin{equation}\label{5.1pr}
%S(\lambda%)=\frac{s^{-1}(\sl,\ell)\psi(c(\sl,\ell))+i\sqrt{\lambda}\widehat{\psi}(c(\sl,\ell))}{s^{-1}(\sl,\ell)\psi(c(\sl,\ell))-i\sqrt{\lambda}\widehat{\psi}(c(\sl,\ell))} 
%=\frac{\widetilde{E}(-\sl)}{\widetilde{E}(\sl)},\, \sl\in\bbC,
%\end{equation} 
To construct $\widetilde{E}$ in the second fraction in \eqref{5.1}, we recall from \cite{Levin} 
that any entire function of exponential type, say, $\phi_N$ or $\phi_D$, could be written as the product of the following three expressions:  (a) a (convergent when infinite) product of the terms $(1-\frac{\lambda}{\lambda_j})$ where $\lambda_j\neq0$ are the zeros of the function;  (b) the term $\lambda^{m_0}$ where $m_0$ is the multiplicity of $\lambda=0$ as a zero of the function;   (c) a nonzero constant. 

Let us denote by $\lambda_j^0$, $j=1,\ldots,j_0$, $1\le j_0\le+\infty$, all common zeros $\lambda_j^0\neq0$ of $\phi_D$ and $\phi_N$ if there are any, and set $j_0=0$ if $\phi_D$ and $\phi_N$ have no common zeros. 
Also, we denote by $m$ the smallest of the following two numbers: the multiplicity of $\lambda=0$ as a zero of the function $\phi_D$ and the multiplicity of $\lambda=0$ as a zero of the function $\phi_N$; we set $m=0$ if at least one of the numbers $\phi_D(0)$ or $\phi_N(0)$ is not equal to zero.
We  then write
\begin{equation}\label{eq1.15}
\phi_D(\lambda)=\lambda^m\prod_{j=1}^{j_0}(1-\frac{\lambda}{\lambda_j^0})\widetilde{\phi}_D(\lambda) \, \text{ and } \,
\phi_N(\lambda)=\lambda^m\prod_{j=1}^{j_0}(1-\frac{\lambda}{\lambda_j^0})\widetilde{\phi}_N(\lambda),
\end{equation}
where $\widetilde{\phi}_D$ and $\widetilde{\phi}_N$ have no common zeros. Here and in what follows we set $\prod_{j=1}^0=1$. Letting $\widetilde{E}(\sl)=\widetilde{\phi}_N(\lambda)+i\sl\,\widetilde{\phi}_D(\lambda)$ gives the second  fraction in \eqref{5.1}. Equation \eqref{5.1} can be also re-written as follows,
\begin{equation*}\begin{split}
%s^{-1}(\sl,\ell)\psi(c(\sl,\ell))+i\sqrt{\lambda}\widehat{\psi}(c(\sl,\ell))=
\phi_N(\lambda)+i\sl\phi_D(\lambda)&=
\lambda^m\prod_{j=1}^{j_0}(1-\frac{\lambda}{\lambda_j^0})\widetilde{E}(\sl),\\
%s^{-1}(\sl,\ell)\psi(c(\sl,\ell))-i\sqrt{\lambda}\widehat{\psi}(c(\sl,\ell))=
\phi_N(\lambda)-i\sl\phi_D(\lambda)&=
\lambda^m\prod_{j=1}^{j_0}(1-\frac{\lambda}{\lambda_j^0})\widetilde{E}(-\sl).
\end{split}
\end{equation*}
From these equations we obtain formulas expressing $\phi_N(\lambda)$ and  $\phi_D(\lambda)$ via $\widetilde{E}(\sl)$ and $\widetilde{E}(-\sl)$.  Next, expressing
$\psi(\cos(\sl\ell))$ and $\widehat\psi(\cos(\sl\ell))$ from equations \eqref{nass}, \eqref{dass} in Lemma \ref{lem:phipsi} via $\phi_N(\lambda)$ and  $\phi_D(\lambda)$ just obtained  yields the following asymptotic relations,
\begin{equation}\label{psipsihat}\begin{split}
\psi(\cos(\sl\ell))&=F(\sl)+O(1/\sl),\,\lambda>0,\lambda\to+\infty,\\ \widehat{\psi}(\cos(\sl\ell))&=\widehat{F}(\sl)+O(1/\sl),\,\lambda>0,\lambda\to+\infty,\end{split}
\end{equation}
where we introduced notation
\begin{equation}\label{defF}\begin{split}
F(\sl) &= \frac{\sin(\sl\ell)}{2\sl} \lambda^m\prod_{j=1}^{j_0}(1-\frac{\lambda}{\lambda_j^0})
\big(\widetilde{E}(\sl)+\widetilde{E}(-\sl)\big),\\
\widehat{F}(\sl)&=\frac{1}{2i\sl}\lambda^m\prod_{j=1}^{j_0}(1-\frac{\lambda}{\lambda_j^0})
\big(\widetilde{E}(\sl)-\widetilde{E}(-\sl)\big).\end{split}
\end{equation}
To describe how to recover from the scattering information the polynomials $\psi$ and $\widehat{\psi}$ defined in \eqref{2.2}  we need one last piece of notation: We introduce the following $2p+1$ numbers,  $z_k,\hat{z}_{\hat{k}}\in [0,1]$, and the following real sequences, $\sqrt{\lambda_k^{(n)}}$, $\sqrt{\hat{\lambda}_k^{(n)}}$,
\begin{equation}\label{zlambda}\begin{split}
z_k&:=\frac{k}{p},\, \hat{z}_{\hat{k}}:=\frac{\hat{k}}{p-1},\, k=0,1,\ldots, p,\, \hat{k}=0,1,\ldots,p-1,\\
\sqrt{\lambda_k^{(n)}}&:=(\arccos z_k+2\pi n)/\ell,
\sqrt{\hat{\lambda}_{\hat{k}}^{(n)}}:=(\arccos\hat{z}_{\hat{k}}+2\pi n)/\ell,\,
\, n=1,2,\ldots.\end{split}
\end{equation}
We stress that the function $F$ and $\widehat{F}$ from \eqref{defF} are defined using only the $S$-function of the graph $T_\infty$ and the common zeros of $\phi_D$ and $\phi_N$. It is therefore natural to call the following $2p+1$ numbers, $f_k$ and $\hat{f}_{\hat{k}}$, {\em the scattering information for $T_\infty$},
\begin{equation}\label{defSD}
f_k:=\lim_{n\to\infty}F\big(\sqrt{\lambda_k^{(n)}}\big),\, k=0,1,\ldots,p,\,
\hat{f}_{\hat{k}}:=\lim_{n\to\infty}\widehat{F}\big(\sqrt{\hat{\lambda}_{\hat{k}}^{(n)}}\big),\, \hat{k}=0,1,\ldots, p-1.
\end{equation}
We recall that in Section \ref{sec:2} we described an algorithm of determining the shape of a tree when the ratio of the respective polynomials $\psi$ and $\widehat{\psi}$ is given. The following result shows how to recover the polynomials provided the scattering information $f_k$, $\hat{f}_{\hat{k}}$ is given.

%\YL{Comment that if $q$ is zero then $c$ is $\cos$ and no need to take the limits as $f_k$ is $F(\lambda_k^0)$???? Is this correct???}

\begin{theorem}\label{thm:sec5}  Let $T_\infty$ be the graph obtained from a compact tree $T$ by attaching an infinite lead $e_0$ at its root. We assume  Hypothesis \ref{hypo:stass} and $q_0=0$ and impose standard boundary conditions. Given the scattering information $f_k$ and $\hat{f}_{\hat{k}}$ defined in \eqref{defF}, \eqref{defSD} via the $S$-function of $T_\infty$ and the set $\spec(L_D)\cap\spec(L_N)$ of the common eigenvalues of $L_D$ and $L_N$, the polynomials $\psi=\psi(z)$ and $\widehat{\psi}=\widehat{\psi}(z)$ associated with $T$ as indicated in \eqref{2.2} are uniquely determined by finding their coefficients from the relations $\psi(z_k)=f_k$, $k=0,1,\ldots,p$, and $\widehat{\psi}(\hat{z}_{\hat{k}})=\hat{f}_{\hat{k}}$, $\hat{k}=0,1,\ldots,p-1$.
\end{theorem}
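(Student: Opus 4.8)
The plan is to establish two facts: first, that the limits in \eqref{defSD} exist and equal $\psi(z_k)$ and $\widehat{\psi}(\hat z_{\hat k})$, respectively; and second, that knowing the values of a polynomial of degree $p$ (resp.\ $p-1$) at $p+1$ (resp.\ $p$) distinct nodes determines it uniquely via Lagrange interpolation. The second fact is immediate since the nodes $z_k=k/p$, $k=0,\ldots,p$, are $p+1$ distinct points in $[0,1]$ and $\psi$ has degree $p$ by Remark \ref{rem: Rem2.6}, and similarly $\hat z_{\hat k}=\hat k/(p-1)$ are $p$ distinct points while $\widehat\psi$ has degree $p-1$; the Vandermonde determinant is nonzero, so the coefficients are recovered by solving a linear system. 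So the entire content is the first fact.

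To prove the first fact, I would start from the asymptotic relations \eqref{psipsihat}, which themselves follow by combining Lemma \ref{lem:phipsi} with the factorizations \eqref{eq1.15} and the definition \eqref{defF} of $F$ and $\widehat F$: indeed, solving \eqref{eq1.15} and the displayed equations for $\phi_N,\phi_D$ in terms of $\widetilde E(\pm\sl)$ and substituting into \eqref{nass}, \eqref{dass} gives $\psi(\cos(\sl\ell))=F(\sl)+O(1/\sl)$ and $\widehat\psi(\cos(\sl\ell))=\widehat F(\sl)+O(1/\sl)$ as $\lambda\to+\infty$. Then I would evaluate along the sequences in \eqref{zlambda}. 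The point of the choice $\sqrt{\lambda_k^{(n)}}=(\arccos z_k+2\pi n)/\ell$ is precisely that $\cos(\sqrt{\lambda_k^{(n)}}\,\ell)=\cos(\arccos z_k+2\pi n)=z_k$ for every $n$, so $\psi(\cos(\sqrt{\lambda_k^{(n)}}\,\ell))=\psi(z_k)$ is a constant sequence; since $\lambda_k^{(n)}\to+\infty$ as $n\to\infty$, the error term $O(1/\sqrt{\lambda_k^{(n)}})\to 0$, and therefore $f_k=\lim_{n\to\infty}F(\sqrt{\lambda_k^{(n)}})=\psi(z_k)$. The identical computation with $\hat z_{\hat k}$ and $\widehat F$ gives $\hat f_{\hat k}=\widehat\psi(\hat z_{\hat k})$. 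Combining with Lagrange interpolation completes the proof.

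The main obstacle I anticipate is making sure that the error term in \eqref{psipsihat} is genuinely uniform and really is $O(1/\sl)$ along the real axis, rather than along more general sequences in $\bbC$. This requires care because $F$ and $\widehat F$ in \eqref{defF} involve the infinite product $\prod_{j=1}^{j_0}(1-\lambda/\lambda_j^0)$ and the factor $\lambda^m$, which grow, while $\widetilde E(\sl)\pm\widetilde E(-\sl)$ may also grow; one must verify that the cancellations in \eqref{eq1.15}--\eqref{defF} are exact so that $F$ and $\widehat F$ are, up to the claimed error, simply $\psi(\cos(\sl\ell))$ and $\widehat\psi(\cos(\sl\ell))$ evaluated along real $\sl$, where $\cos(\sl\ell)$ stays bounded in $[-1,1]$. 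Here I would lean on the fact, already recorded in the excerpt, that $\phi_N$ and $\phi_D$ are entire of exponential type with the factorization from \cite{Levin}, so that the product and $\lambda^m$ factors in \eqref{defF} are exactly the ones that were divided out in passing to $\widetilde\phi_N,\widetilde\phi_D$; algebraically, $F(\sl)$ is literally $\tfrac{\sin(\sl\ell)}{2\sl}\cdot 2\phi_N(\lambda)/\big(\tfrac{\sin(\sl\ell)}{\sl}\big)\cdot(\text{stuff})$ arranged so that $F(\sl)=\tfrac{\sl}{\sin(\sl\ell)}\,\phi_N(\lambda)+\tfrac{\text{lower}}{\sl}$, matching \eqref{nass}. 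Once the bookkeeping in \eqref{eq1.15}--\eqref{defF} is checked to be exact, the rest is the elementary limit argument above together with the observation that $\sin(\sqrt{\lambda_k^{(n)}}\,\ell)=\sin(\arccos z_k)=\sqrt{1-z_k^2}$ is bounded away from $0$ precisely when $z_k\neq\pm1$, i.e.\ for $k=1,\ldots,p-1$, while the cases $z_0=0$, $z_p=1$, $\hat z_0=0$, $\hat z_{p-1}=1$ need a direct check using that $\psi$ vanishes at $\cos(\sl\ell)=\pm1$ to the same order as $\sin(\sl\ell)$ (as used in the proof of Theorem \ref{Theorem 3.1prime}), so the ratio $\psi(\cos(\sl\ell))\sl/\sin(\sl\ell)$ remains well-defined and has the stated limit.
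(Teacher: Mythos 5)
Your proposal is correct and follows essentially the same route as the paper: the limits $f_k$, $\hat f_{\hat k}$ are identified with $\psi(z_k)$, $\widehat\psi(\hat z_{\hat k})$ via the asymptotics \eqref{psipsihat} evaluated along the sequences \eqref{zlambda} (where $\cos(\sqrt{\lambda_k^{(n)}}\ell)=z_k$), and the coefficients are then recovered from the nonzero Vandermonde determinant. Your extra worries about uniformity and the endpoint nodes are harmless but unnecessary, since \eqref{defF} multiplies by $\sin(\sl\ell)$ rather than divides, so \eqref{psipsihat} holds along all real $\lambda\to+\infty$.
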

\begin{proof}
%It is known (e.g.,  apply \cite[Corollary 12.2.10]{MP0} and \cite[Lemma 1.3.1]{Mar}) that for $\sl\in\bbC$ one has
%\begin{equation}
%\label{x}
%\begin{split}
%s_j(\sl,\ell)&=\frac{\sin\sqrt{\lambda}\ell}{\sqrt{\lambda}} +O\big(\frac{e^{|{\im}\sqrt{\lambda}|\ell}}{|\lambda|}\big) \text{ as  $|\lambda|\to\infty$}, \, j=1,\ldots,g,\\
%c_j(\sl,\ell)&=\cos\sqrt{\lambda}\ell +O\big(\frac{e^{|{\im}\sqrt{\lambda}|\ell}}{|\sqrt{\lambda}|}\big) \text{ as  $|\lambda|\to\infty$},\, j=1,\ldots,g.
%s'_j(\sl,\ell)&=\cos\sl\ell+O\big(\frac{e^{|\im\sl|\ell}}{|\sl|}\big)\text{ as  $|\lambda|\to\infty$},\, j=1,\ldots,g,\\
%c'_j(\sl,\ell)&=-\sl\sin\sl\ell+O(e^{|\im\sl|\ell})\text{ as  $|\lambda|\to\infty$},\, j=1,\ldots,g.
%\end{split}
%\end{equation}
Due to \eqref{reprform} %(\ref{x}) and (\ref{5.1}), 
 for real $\lambda$ we have
\begin{equation}\label{assimppsi}
{\psi}(c(\lambda,\ell))-{\psi}(\cos\sqrt{\lambda} \ell)=o(1),\,
\widehat{\psi}(c(\lambda,\ell))-\widehat{\psi}(\cos\sqrt{\lambda} \ell)=o(1) \text{ as $\lambda\to+\infty$.}
\end{equation}
We continue the proof for $\psi$, the proof for $\widehat{\psi}$ is analogous. Formulas \eqref{psipsihat}, \eqref{zlambda}, \eqref{defSD}, and \eqref{assimppsi} show that
\begin{equation}
f_k=
\lim_{n\to\infty}F\big(\sqrt{\lambda_k^{(n)}}\big)
=\lim_{n\to\infty}\psi\big(c(\sqrt{\lambda_k^{(n)}}, \ell)\big)
=\lim_{n\to\infty}\psi\big(\cos(\sqrt{\lambda_k^{(n)}}\ell)\big)=\psi(z_k),
\end{equation}
and thus $f_k$ determine the values $\psi(z_k)$ of the polynomial $\psi$ of degree $p$ at $p+1$ different points $z_k$, $k=0,1,\ldots,p$. Denoting by $\psi_i$ the coefficients of the polynomial $\psi(z)=\sum_{i=0}^p\psi_iz^i$ we conclude that the relations $\psi(z_k)=f_k$, $k=0,1,\ldots,p$, give a system of $p+1$ linear equations with $p+1$ unknowns $\psi_i$. The matrix of the system has a nonzero Vandermonde determinant $\det[(z_k)^i]_{k,i=0}^p$ and thus the system has a unique solution.
\end{proof}

\appendix
\section{Tables for $\Phi_D$ and $\Phi_N$}\label{TT}
   \[\begin{array}{c|cccccccccccccc}
 \Phi_D & 1 &2 \dots & d_0 & {11}  \dots & 1(d_1-1) & \dots & 1  \dots & d_0 & {11}  \dots & 1(d_1-1) \dots \\ \hline
   1 & 1 &0 \dots & 0 &0 \dots & 0 & \dots & 0  \dots & 0 & 0  \dots & 0 \dots \\
    2 & 0 &1 \dots & 0 &0 \dots & 0 & \dots & 0  \dots & 0 & 0  \dots & 0 \dots \\
 \dots &  \dots &\dots & \dots & \dots &  \dots & \dots & \dots & \dots & \dots & \dots \\
 d_0 & 0 & 0 \dots & 1 &0 \dots & 0 & \dots & 0  \dots & 0 & 0  \dots & 0 \dots \\
 1 & c(\ell) &0  \dots & 0 & -1 \dots &0 & \dots & s(\ell) \dots & 0   & 0 \dots & 0  \dots \\
  \dots & \dots & \dots & \dots & \dots &  \dots & \dots & \dots & \dots & \dots & \dots \\
   d_1-1 & c(\ell) &0 \dots & 0 & 0 \dots &-1 & \dots & s(\ell) \dots & 0   & 0 \dots & 0  \dots \\
    d_1 & c'(\ell) &0  \dots & 0 & 0 \dots &0 & \dots & s'(\ell) \dots & 0   & -1\dots & -1  \dots \\
     \dots & \dots & \dots & \dots &  \dots & \dots & \dots & \dots & \dots & \dots \\
     1 & 0 &0 \dots & c(\ell) & 0 \dots &0 & \dots & 0 \dots & s(\ell)   & 0 \dots & 0  \dots \\
  \dots & \dots & \dots & \dots & \dots &  \dots & \dots & \dots & \dots & \dots & \dots \\
   d_{d_0}-1 & 0 &0 \dots & c(\ell) & 0 \dots &0 & \dots & 0 \dots & s(\ell)   & 0 \dots & 0  \dots \\
    d_{d_0} & 0 & \dots & c'(\ell) & 0 \dots &0 & \dots & 0 \dots & s'(\ell)   & 0\dots & 0  \dots \\
  \vdots & \vdots & \vdots & \vdots & \vdots & \vdots & \vdots & \vdots & \vdots  & \vdots &   \vdots
\end{array}.\]

\[\begin{array}{c|cccccccccccccc}
 \Phi_N & 1 &2  \dots & d_0 & {11}  \dots & 1(d_1-1) & \dots & 1  \dots & d_0 & {11}  \dots & 1(d_1-1) \dots \\ \hline
   1 & 1 &-1 \dots & 0 &0 \dots & 0 & \dots & 0  \dots & 0 & 0  \dots & 0 \dots \\
    2 & 1 &0 \dots & 0 &0 \dots & 0 & \dots & 0  \dots & 0 & 0  \dots & 0 \dots \\
 \dots &  \dots &\dots & \dots & \dots &  \dots & \dots & \dots & \dots & \dots & \dots \\
 d_0-1 & 1 & 0 \dots & -1 &0 \dots & 0 & \dots & 0  \dots & 0 & 0  \dots & 0 \dots \\
 d_0 & 0 & 0 \dots & 0 &0 \dots & 0 & \dots & 1 \dots & 1 & 0  \dots & 0 \dots \\
 1 & c(\ell) &0  \dots & 0 & -1 \dots &0 & \dots & s(\ell) \dots & 0   & 0 \dots & 0  \dots \\
  \dots & \dots & \dots & \dots & \dots &  \dots & \dots & \dots & \dots & \dots & \dots \\
   d_1-1 & c(\ell) &0 \dots & 0 & 0 \dots &-1 & \dots & s(\ell) \dots & 0   & 0 \dots & 0  \dots \\
    d_1 & c'(\ell) &0  \dots & 0 & 0 \dots &0 & \dots & s'(\ell) \dots & 0   & -1\dots & -1  \dots \\
     \dots & \dots & \dots & \dots &  \dots & \dots & \dots & \dots & \dots & \dots \\
     1 & 0 &0 \dots & c(\ell) & 0 \dots &0 & \dots & 0 \dots & s(\ell)   & 0 \dots & 0  \dots \\
  \dots & \dots & \dots & \dots & \dots &  \dots & \dots & \dots & \dots & \dots & \dots \\
   d_{d_0}-1 & 0 &0 \dots & c(\ell) & 0 \dots &0 & \dots & 0\dots & s(\ell)    & 0 \dots & 0  \dots \\
    d_{d_0} & 0 & \dots & c'(\ell) & 0 \dots &0 & \dots & 0 \dots & s'(\ell)   & 0\dots & 0  \dots \\
  \vdots & \vdots & \vdots & \vdots & \vdots & \vdots & \vdots & \vdots & \vdots  & \vdots &   \vdots
\end{array}.\]

%\section{Operator Pencils}\label{AppOP}

% \YL{Slava, I am happy with the proof except two points: (a) How do we know that $\wL$ is  selfadjoint? (b) How do we know that there is a global differentiable family of the eigenvalues and eigenvectors for the pencils $P(\lambda,\eta)$? This is an easy fact when the eigenvalues are simple but what if they are multiple? Then they can change multiplicity, and every time when they do we have to make a very careful "selection" of the differentiable brunches of the eigenvalue curves (at least this is what happens for the single operator, see, e.g, \cite[Theorem II.5.4]{Kato}). You mentioned that Muller and you proved this, could we please have a reference?}
 
 % {\em Part II.} By Theorem \ref{Theorem 3.1} if $\lambda_0$ is an isolated eigenvalue of $L_N-\lambda_0$ and $k:=\dim\ker(L_N-\lambda_0)$ then $\phi_N(\lambda)=(\lambda-\lambda_0)^k\tilde\phi_N(\lambda)$ where $\tilde\phi_N(\lambda_0)\neq0$ as needed.
  
%  It remains to deal with the eigenvalues of $L_\infty$, that is, to show that if $\lambda_0$ is an isolated eigenvalue of $L_\infty$ and $k:=\dim\ker(L_\infty-\lambda_0)$  then $E(\sl)=(\sl-\sl_0)^k\tilde E(\sl)$ where $\tilde E(\sl_0)\neq0$.
  %To begin \YL{finish the proof}
 \subsection*{Acknowledgements}

This material is based upon work supported by the US NSF grants DMS-2108983/2106157; the authors thank the US NSF, National Academy of Sciences and Office of Naval Research for the support  of the project  ``IMPRESS-U: Spectral and geometric methods for damped wave equations with applications to fiber lasers".
The hospitality of the Institute of Mathematics of the Polish Academy of Sciences where the work began is gratefully acknowledged. VP was partially supported by the Academy of Finland (project no. 358155) and  is grateful to the University of Vaasa for hospitality. OB and VP thank the Ministry of Education and Science of Ukraine for the support in completing the work on the project 'Inverse problems of finding the shape of a graph by spectral data', state registration number 0124U000818.  YL would
like to thank the Courant Institute of Mathematical Sciences at NYU
and especially Prof. Lai-Sang Young for their hospitality.

The authors thank Prof.\ Uzy Smilansky for suggesting relevant literature.

\end{document}